\newcommand*{\mailto}[1]{\href{mailto:#1}{\nolinkurl{#1}}}
\newcommand{\bbC}{{\mathbb{C}}}
\newcommand{\bbN}{{\mathbb{N}}}
\newcommand{\bbR}{{\mathbb{R}}}
\newcommand{\cB}{{\mathcal B}}
\newcommand{\cD}{{\mathcal D}}
\newcommand{\cF}{{\mathcal F}}
\newcommand{\cH}{{\mathcal H}}
\newcommand{\cW}{{\mathcal W}}
\newcommand{\cX}{{\mathcal X}}
\newcommand{\gq}{{\mathfrak{q}}}
\DeclareMathOperator{\supp}{supp}
\DeclareMathOperator{\ran}{ran}
\DeclareMathOperator{\dom}{dom}
\DeclareMathOperator*{\slim}{s-lim}
\DeclareMathOperator*{\sgn}{sgn}
\newcommand{\loc}{\text{\rm{loc}}}
\newcommand{\no}{\notag}
\newcommand{\lb}{\label}
\newcommand{\f}{\frac}
\newcommand{\ol}{\overline}
\newcommand{\wti}{\widetilde}
\newcommand{\hatt}{\widehat} 
\newcommand{\bi}{\bibitem}
\newcommand{\dott}{\,\cdot\,}
\let\geq\geqslant
\let\leq\leqslant
\def\theequation{\@arabic\c@equation}
\numberwithin{equation}{section}
\newtheorem{theorem}{Theorem}[section]
\newtheorem{lemma}[theorem]{Lemma}
\newtheorem{corollary}[theorem]{Corollary}
\newtheorem{hypothesis}[theorem]{Hypothesis}
\theoremstyle{remark}
\begin{document}

\numberwithin{equation}{section}
\allowdisplaybreaks

\title[Higher-Order Krein Laplacians]{A Bound for the Eigenvalue Counting Function for Higher-Order Krein Laplacians on Open Sets} 
  
\author[F.\ Gesztesy]{Fritz Gesztesy}  
\address{Department of Mathematics,
University of Missouri, Columbia, MO 65211, USA}
\email{\mailto{gesztesyf@missouri.edu}}
\urladdr{\url{http://www.math.missouri.edu/personnel/faculty/gesztesyf.html}}

\author[A.\ Laptev]{Ari Laptev}  
\address{Department of Mathematics, Imperial College London, Huxley Building, 180
Queen�s Gate, London SW7 2AZ, UK}  
\email{\mailto{a.laptev@imperial.ac.uk}}
\urladdr{\url{http://www2.imperial.ac.uk/~alaptev/}}

\author[M.\ Mitrea]{Marius Mitrea}
\address{Department of Mathematics,
University of Missouri, Columbia, MO 65211, USA}
\email{\mailto{mitream@missouri.edu}}
\urladdr{\url{http://www.math.missouri.edu/personnel/faculty/mitream.html}}

\author[S.\ Sukhtaiev]{Selim Sukhtaiev}
\address{Department of Mathematics, University of
Missouri, Columbia, MO 65211, USA}
\email{\mailto{sswfd@mail.missouri.edu}}


\thanks{Work of M.\,M.\ was partially supported by the Simons Foundation Grant $\#$\,281566
and by a University of Missouri Research Leave grant.} 
\thanks{{\it Mathematical Results in Quantum Mechanics, QMath12 Proceedings}, P.\ Exner, W.\ K\"onig, and H.\ Neidhardt (eds), World Scientific, Singapore, to appear.}

\date{\today}
\subjclass[2010]{Primary 35J25, 35J40, 35P15; Secondary 35P05, 46E35, 47A10, 47F05.}
\keywords{Krein Laplacian, eigenvalues, spectral
analysis, Weyl asymptotics, buckling problem}

\begin{abstract} 
For an arbitrary nonempty, open set $\Omega \subset \bbR^n$, $n \in \bbN$, 
of finite (Euclidean) volume, we consider 
the minimally defined higher-order Laplacian $(- \Delta)^m\big|_{C_0^{\infty}(\Omega)}$, 
$m \in \bbN$, and its Krein--von Neumann extension $A_{K,\Omega,m}$ in $L^2(\Omega)$. With $N(\lambda ,A_{K,\Omega,m})$, $\lambda > 0$, denoting the eigenvalue counting function corresponding to the strictly positive eigenvalues of $A_{K,\Omega,m}$, we derive the bound   
$$
N(\lambda ,A_{K,\Omega,m}) \leq (2 \pi)^{-n} v_n |\Omega| 
\{1 + [2m/(2m+n)]\}^{n/(2m)} \lambda^{n/(2m)}, \quad \lambda > 0, 
$$
where $v_n := \pi^{n/2}/\Gamma((n+2)/2)$ denotes the (Euclidean) volume of the unit 
ball in $\bbR^n$. 

The proof relies on variational considerations and exploits the fundamental link between 
the Krein--von Neumann extension and an underlying (abstract) buckling problem.
\end{abstract}

\maketitle 


\section{Introduction}  \lb{s1}

To set the stage, suppose that $S$ is a densely defined, symmetric, closed operator with nonzero deficiency indices in a separable complex Hilbert space $\cH$ that satisfies 
\begin{equation}
S\geq \varepsilon I_{\cH} \, \text{ for some } \, \varepsilon >0.     \lb{1.1}
\end{equation}
Then, according to M.\ Krein's celebrated 1947 paper \cite{Kr47}, among all nonnegative self-adjoint extensions of $S$, there exist two distinguished ones, $S_F$, the Friedrichs extension of $S$ and  
$S_K$, the Krein--von Neumann extension of $S$, which are, respectively, the largest and 
smallest such extension (in the sense of quadratic forms). In particular, a nonnegative self-adjoint 
operator $\widetilde{S}$ is a self-adjoint extension of $S$ if and only if $\widetilde{S}$ 
satisfies 
\begin{equation} 
S_K\leq\widetilde{S}\leq S_F  
\end{equation}
(again, in the sense of quadratic forms).

An abstract version of \cite[Proposition\ 1]{Gr83}, presented in \cite{AGMST10}, describing the following intimate connection between the nonzero eigenvalues of $S_K$, and a suitable abstract buckling problem, can be summarized as  follows:
\begin{align}
& \text{There exists $0 \neq v_{\lambda} \in \dom(S_K)$ satisfying } \,  
S_K v_{\lambda} = \lambda v_{\lambda}, \quad \lambda \neq 0,   \lb{1.1a} \\
& \text{if and only if }   \no \\ 
& \text{there exists a $0 \neq u_{\lambda} \in \dom(S^* S)$ such that } \, 
S^* S u_{\lambda} = \lambda S u_{\lambda},   \lb{1.1b} 
\end{align}
and the solutions $v_{\lambda}$ of \eqref{1.1a} are in one-to-one correspondence with the 
solutions $u_{\lambda}$ of \eqref{1.1b} given by the pair of formulas
\begin{equation}
u_{\lambda} = (S_F)^{-1} S_K v_{\lambda},    \quad  
v_{\lambda} = \lambda^{-1} S u_{\lambda}.   \lb{1.1c}
\end{equation}
As briefly recalled in Section \ref{s2}, \eqref{1.1b} represents an abstract buckling problem. 
The latter has been the key in all attempts to date in proving Weyl-type asymptotics for 
eigenvalues of $S_K$ when $S$ represents an elliptic partial differential operator in 
$L^2(\Omega)$. In fact, it is convenient to go one step further and replace the abstract buckling eigenvalue problem \eqref{1.1b} by the variational formulation,  
\begin{align}
\begin{split} 
& \text{there exists $u_{\lambda}\in\dom(S)\backslash\{0\}$ such that}   \\
& \quad \text{${\mathfrak{a}}(w,u_{\lambda})=\lambda\,{\mathfrak{b}}(w,u_{\lambda})$ 
for all $w\in\dom(S)$},  
\end{split} 
\end{align}
where the symmetric forms $\mathfrak{a}$ and $\mathfrak{b}$ in $\cH$ are defined by 
\begin{align}   
\mathfrak{a}(f,g) & :=(Sf,Sg)_{\cH},\quad f,g\in\dom(\mathfrak{a}):=\dom(S),    \\ 
\mathfrak{b}(f,g) & :=(f,Sg)_{\cH},\quad f,g\in\dom(\mathfrak{b}):=\dom(S).    
\end{align}

In the present context of higher-order Krein Laplacians, the role of $S$ will be played by the 
closure of the minimally defined operator in $L^2(\Omega)$,
\begin{equation}
A_{min,\Omega,m} := (- \Delta)^m, \quad \dom (A_{min,\Omega,m}) := C_0^{\infty}(\Omega),      
\end{equation}
under the assumption that $\emptyset \neq \Omega \subset \bbR^n$ has finite 
(Euclidean) volume ($|\Omega| < \infty$). This closure, 
$\ol{A_{min,\Omega,m}}$, is denoted by $A_{\Omega,m}$ and explicitly given by
\begin{equation}
A_{\Omega,m} = (- \Delta)^m, \quad \dom (A_{\Omega,m}) = \mathring W^{2m} (\Omega).   
\end{equation} 
The Krein--von Neumann and Friedrichs extensions of $A_{\Omega,m}$ will then be denoted by 
$A_{K, \Omega, m}$ and $A_{F, \Omega, m}$, respectively. (To provide a quick example, we 
note that in the special case $m=n=1$ and $\Omega = (a,b)$, $-\infty <a < b < \infty$, the 
boundary condition associated with $A_{K, (a,b), 1}$ explicitly reads 
$v'(a)=v'(b)=[v(b)-v(a)]/(b-a)$, and that for $A_{F, (a,b), 1}$ is of course the Dirichlet boundary 
condition $v(a) = v(b) = 0$.)

Since $A_{K,\Omega,m}$ has purely discrete spectrum in $(0,\infty)$ bounded away from zero 
by $\varepsilon > 0$ (cf.\ Theorem \ref{t2.4}), let $\{\lambda_{K, \Omega, j}\}_{j\in\bbN}\subset(0,\infty)$ be 
the strictly positive eigenvalues of $A_{K,\Omega,m}$ enumerated in nondecreasing order, counting multiplicity, and let
\begin{equation} 
N(\lambda,A_{K,\Omega,m}):=\#\{j\in\bbN\,|\,0<\lambda_{K,\Omega,j} < \lambda\}, 
\quad \lambda > 0, 
\end{equation}
be the eigenvalue distribution function for $A_{K,\Omega,m}$ (which takes into account only strictly positive eigenvalues of $A_{K,\Omega,m}$). The function 
$N(\, \cdot \, ,A_{K,\Omega,m})$ is the principal object of this note. 
Similarly, $N(\lambda,A_{F,\Omega,m})$, $\lambda >0$, denotes the eigenvalue counting 
function for $A_{F,\Omega,m}$. 

In Section \ref{s2} we recall the basic abstract facts on the Friedrichs extension, $S_F$ 
and the Krein--von Neumann extension $S_K$ of a strictly positive, closed, symmetric operator 
$S$ in a complex, separable Hilbert space $\cH$ and describe the intimate link between the 
Krein--von Neumann extension and an underlying abstract buckling problem. Section \ref{s3} then focuses on the concrete case of higher-order Laplacians $(- \Delta)^m$, $m \in \bbN$, on 
open, finite (Euclidean) volume subsets $\Omega \subset \bbR^n$ (without 
 imposing any constraints on $\Omega$ in the case where $\Omega$ is bounded) 
 and derives the bound 
\begin{equation}
N(\lambda ,A_{K,\Omega,m}) \leq (2 \pi)^{-n} v_n |\Omega| 
\{1 + [2m/(2m+n)]\}^{n/(2m)} \lambda^{n/(2m)},  \quad 
 \lambda > 0,     \lb{1.12} 
\end{equation}
where $v_n := \pi^{n/2}/\Gamma((n+2)/2)$ denotes the (Euclidean) volume of the unit 
ball in $\bbR^n$. 
We remark that the power law behavior $\lambda^{n/(2m)}$ coincides with the one in the known 
Weyl asymptotic behavior. This in itself is perhaps not surprising as it is {\it a priori} known that 
\begin{equation}
N(\lambda ,A_{K,\Omega,m}) \leq N(\lambda ,A_{F,\Omega,m}), \quad \lambda > 0,  \lb{1.13}
\end{equation} 
and $N(\lambda ,A_{F,\Omega,m})$ is known to have the power law behavior 
$\lambda^{n/(2m)}$ (cf.\ \eqref{4.3}, due to \cite{La97}, which in turn extends the corresponding 
result in \cite{LY83} in the case $m=1$). We emphasize that \eqref{1.13} is not in conflict with 
variational eigenvalue estimates since    
$N(\lambda ,A_{K,\Omega,m})$ only counts the strictly positive eigenvalues of 
$A_{K,\Omega,m}$ less than $\lambda > 0$ and hence avoids taking into account the (generally,  
infinite-dimensional) null space of $A_{K,\Omega,m}$. 
Rather than using known estimates for $N(\, \cdot \, ,A_{F,\Omega,m})$ 
(cf., e.g., \cite{BS70}, \cite{BS71}, \cite{BS72}, \cite{BS73}, \cite{BS79}, \cite{BS80}, 
\cite{Ge13}, \cite{GLW11}, \cite{HH08}, \cite{HH11}, \cite{La97}, \cite{Li80}, \cite{LY83}, 
\cite{Me77}, \cite{NS05}, \cite{Ro71}, \cite{Ro72}, \cite{Ro76}, \cite{Sa01}, \cite{We08}), 
we will use the one-to-one correspondence of nonzero eigenvalues 
of $A_{K,\Omega,m}$ with the eigenvalues of its underlying buckling problem 
(cf.\ \eqref{1.1a}--\eqref{1.1c}) and estimate the eigenvalue counting function for the latter 
in Section \ref{s3}. In our final Section \ref{s4} we briefly discuss the superiority of the buckling problem based bound \eqref{1.12} over the known estimates for $N(\, \cdot \, ,A_{F,\Omega,m})$.

Since Weyl asymptotics for $N(\, \cdot \, ,A_{K,\Omega,m})$ and $N(\, \cdot \,  ,A_{F,\Omega,m})$  
are not considered in this paper we just refer to the monographs \cite{Le90} and \cite{SV97}, but 
note that very detailed bibliographies on this subject appeared in \cite{AGMT10} and 
\cite{AGMST13}. At any rate, the best known result on Weyl asymptotics for 
$N(\, \cdot \, ,A_{K,\Omega,m})$ to date is proven for bounded Lipschitz domains 
\cite{BGMM14}, whereas the estimate \eqref{1.12} assumes no regularity of $\Omega$ at all. 

We conclude this introduction by summarizing the notation used in this paper. Throughout this 
paper, the symbol $\cH$ is reserved to denote a separable complex Hilbert space with  
$(\dott,\dott)_{\cH}$ the scalar product in $\cH$ (linear in the second argument), and $I_{\cH}$ 
the identity operator in $\cH$. Next, let $T$ be a linear operator mapping (a subspace of) a
Banach space into another, with $\dom(T)$ and $\ran(T)$ denoting the domain and range of $T$. 
The closure of a closable operator $S$ is denoted by $\ol S$. The kernel (null space) of $T$ is 
denoted by $\ker(T)$. The spectrum, point spectrum (i.e., the set of eigenvalues), discrete 
spectrum, essential spectrum, and resolvent set of a closed linear operator in $\cH$ will be 
denoted by $\sigma(\cdot)$, $\sigma_{p}(\cdot)$, $\sigma_{d}(\cdot)$, $\sigma_{ess}(\cdot)$, 
and $\rho(\cdot)$, respectively. The symbol $\slim$ abbreviates the limit in the strong 
(i.e., pointwise) operator topology (we also use this symbol to describe strong limits in $\cH$). 

The Banach spaces of bounded and compact linear operators on $\cH$ are
denoted by $\cB(\cH)$ and $\cB_\infty(\cH)$, respectively. Similarly,
the Schatten--von Neumann (trace) ideals will subsequently be denoted
by $\cB_p(\cH)$, $p\in (0,\infty)$. 
In addition, $U_1\dotplus U_2$ denotes the direct sum of the subspaces $U_1$ 
and $U_2$ of a Banach space $\cX$. Moreover, $\cX_1\hookrightarrow\cX_2$ denotes the continuous embedding of the Banach space $\cX_1$ into the Banach space $\cX_2$. 

The symbol $L^2(\Omega)$, with $\Omega\subseteq\bbR^n$ open, $n\in\bbN$, 
is a shortcut for $L^2(\Omega,d^n x)$, whenever the $n$-dimensional Lebesgue measure is 
understood. For brevity, the identity operator in $L^2(\Omega)$ will typically be denoted 
by $I_{\Omega}$. The symbol $\cD(\Omega)$ is reserved for the set 
of test functions $C_0^{\infty}(\Omega)$ on $\Omega$, equipped with the standard 
inductive limit topology, and $\cD'(\Omega)$ represents its dual space, the set of distributions 
in $\Omega$. The cardinality of a set $M$ is abbreviated by $\#(M)$. In addition, 
we define $\bbN_0:=\bbN\cup\{0\}$, so that $\bbN_0^n$ becomes the collection of all multi-indices 
with $n$ components. As is customary, for each 
$\alpha=(\alpha_1,...,\alpha_n)\in{\mathbb{N}}_0^n$ we denote by 
$|\alpha|:=\alpha_1+\cdots+\alpha_n$ the length of $\alpha$, and set 
$\alpha!:=\alpha_1!\cdots\alpha_n!$.

Moreover, $A\approx B$ signifies the existence of a finite constant $C\geq 1$,
independent of the main parameters entering the quantities $A,B$, such that 
$C^{-1} A \leq B \leq CA$.
  
Finally, a notational comment: For obvious reasons, which have their roots in quantum 
mechanical applications, we will, with a slight abuse of notation, dub the expression 
$-\Delta=-\sum_{j=1}^n\partial_j^2$ (rather than $\Delta$) as the ``Laplacian'' in this paper.

\section{Basic Facts on the Krein--von Neumann 
extension and the Associated Abstract Buckling Problem} 
\lb{s2}

In this preparatory section we recall the basic facts on the Krein--von Neumann extension of 
a strictly positive operator $S$ in a complex, separable Hilbert space $\cH$ and its associated 
abstract buckling problem as discussed in \cite{AGMT10, AGMST10}. 
For an extensive survey of this circle of ideas and an exhaustive list of references 
as well as pertinent historical comments we refer to \cite{AGMST13}.

To set the stage, we denote by $S$ a linear, densely defined, symmetric 
(i.e., $S\subseteq S^*$), and closed operator in $\cH$ throughout this section. 
We recall that $S$ is called {\it nonnegative} provided $(f,Sf)_\cH\geq 0$ for all $f\in\dom(S)$.
The operator $S$ is called {\it strictly positive}, if for some $\varepsilon>0$ one has
$(f,Sf)_\cH\geq\varepsilon\|f\|_{\cH}^2$ for all $f\in\dom(S)$; 
one then writes $S\geq\varepsilon I_{\cH}$.  
Next, we recall that two nonnegative, self-adjoint operators $A,B$ in $\cH$ 
satisfy $A\leq B$ (in the sense of forms) if 
\begin{align}\lb{AleqBjussi1}
\dom\big(B^{1/2}\big)\subset\dom\big(A^{1/2}\big)        
\end{align}
and
\begin{align}\lb{AleqBjussi2}
\big(A^{1/2}f,A^{1/2}f\big)_{\cH}\leq\big(B^{1/2}f,B^{1/2}f\big)_{\cH},
\quad f\in\dom\big(B^{1/2}\big). 
\end{align}
We also recall (\cite[Section\ I.6]{Fa75}, \cite[Theorem\ VI.2.21]{Ka80}) that for 
$A$ and $B$ both self-adjoint and nonnegative in $\cH$ one has 
\begin{equation}
0\leq A\leq B\,\text{ if and only if }\,
(B+a I_\cH)^{-1}\leq(A+a I_\cH)^{-1}\,\text{ for all }\,a>0.      
\end{equation}
Moreover, we note the useful fact that $\ker(A)=\ker(A^{1/2})$. 

The following is a fundamental result to be found in M.\ Krein's celebrated 1947 paper
\cite{Kr47} (cf.\ also Theorems~2 and 5--7 in the English summary on page 492): 
 
\begin{theorem}\lb{t2.1}
Assume that $S$ is a densely defined, closed, nonnegative operator in $\cH$. Then, among all 
nonnegative self-adjoint extensions of $S$, there exist two distinguished ones, $S_K$ and $S_F$, 
which are, respectively, the smallest and largest such extension {\rm (}in the sense of 
\eqref{AleqBjussi1}--\eqref{AleqBjussi2}{\rm )}. Furthermore, a nonnegative self-adjoint 
operator $\widetilde{S}$ is a self-adjoint extension of $S$ if and only if $\widetilde{S}$ 
satisfies 
\begin{equation}\lb{Fr-Sa}
S_K\leq\widetilde{S}\leq S_F.
\end{equation}
In particular, \eqref{Fr-Sa} determines $S_K$ and $S_F$ uniquely. 
In addition,  if $S\geq\varepsilon I_{\cH}$ for some $\varepsilon>0$, one has 
$S_F\geq\varepsilon I_{\cH}$, and 
\begin{align}\lb{SF}
\dom(S_F) & =\dom(S)\dotplus (S_F)^{-1}\ker (S^*),   \\ 
\dom(S_K) & =\dom(S)\dotplus\ker(S^*),    
\lb{SK}   \\
\dom(S^*) & =\dom(S)\dotplus (S_F)^{-1}\ker(S^*)\dotplus\ker(S^*)    \no \\
& =\dom(S_F)\dotplus\ker(S^*),    
\lb{S*} 
\end{align}
and 
\begin{equation}\lb{Fr-4Tf}
\ker(S_K)=\ker\big((S_K)^{1/2}\big)=\ker(S^*)=\ran(S)^{\bot}.
\end{equation} 
\end{theorem}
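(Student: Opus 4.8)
\emph{Proof strategy.} The plan is to construct the two extensions by separate classical devices --- $S_F$ by the Friedrichs form-closure procedure, $S_K$ by an explicit prescription of its domain --- and then to read off \eqref{SF}--\eqref{Fr-4Tf} and the extremality \eqref{Fr-Sa} from a von Neumann--type direct-sum analysis of $\dom(S^*)$. I would first carry this out under the hypothesis $S \geq \varepsilon I_\cH$ for some $\varepsilon > 0$ (the only case needed later in the paper), the general nonnegative case following by a standard limiting argument applied to $S + \delta I_\cH$ as $\delta \downarrow 0$.

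\emph{The Friedrichs extension.} I would introduce the symmetric, nonnegative form $\mathfrak{s}(f,g) := (f, Sg)_\cH$ on $\dom(\mathfrak{s}) := \dom(S)$; being the form of a symmetric operator, $\mathfrak{s}$ is closable, and $S_F$ is by definition the nonnegative self-adjoint operator associated with its closure $\overline{\mathfrak{s}}$. If $S \geq \varepsilon I_\cH$, then $\mathfrak{s}(f,f) \geq \varepsilon\|f\|_\cH^2$ on $\dom(S)$, a bound inherited by $\overline{\mathfrak{s}}$, so $S_F \geq \varepsilon I_\cH$ and in particular $\ran(S_F) = \cH$. Maximality of $S_F$ is then purely formal: for any nonnegative self-adjoint extension $\widetilde{S}$ of $S$, the closed form $\mathfrak{t}_{\widetilde S}$ of $\widetilde S$ restricts to $\mathfrak{s}$ on $\dom(S)$ and hence contains $\overline{\mathfrak{s}}$, the smallest closed form extending $\mathfrak{s}$; thus $\dom\!\big(S_F^{1/2}\big) = \dom(\overline{\mathfrak{s}}) \subseteq \dom\!\big(\widetilde S^{1/2}\big)$ with the two forms agreeing on $\dom(\overline{\mathfrak{s}})$, which is precisely \eqref{AleqBjussi1}--\eqref{AleqBjussi2} for $\widetilde S \leq S_F$.

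\emph{The Krein--von Neumann extension and the direct-sum decompositions.} Under $S \geq \varepsilon I_\cH$, the inequality $\|Sf\|_\cH \geq \varepsilon\|f\|_\cH$ together with closedness of $S$ forces $\ran(S)$ to be closed, so $\cH = \ran(S)\oplus\ker(S^*)$, and $\dom(S)\cap\ker(S^*) = \{0\}$ since $u\in\dom(S)$ with $S^*u = Su = 0$ must vanish. I would then \emph{define} $S_K$ by $\dom(S_K) := \dom(S)\dotplus\ker(S^*)$, $S_K(u+w) := Su$, and verify directly that it is well defined, symmetric, nonnegative (indeed $(S_K(u+w),u+w)_\cH = (Su,u)_\cH$), and satisfies $S_K \subseteq S^*$. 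Using $\ran(S_F) = \cH$, splitting $S^*f = Sv + k$ along $\cH = \ran(S)\oplus\ker(S^*)$ and setting $h := v + (S_F)^{-1}k \in \dom(S_F)$ so that $S^*(f-h) = 0$ yields the decompositions \eqref{SF} and \eqref{S*}. Self-adjointness of $S_K$ --- the first genuine point --- then follows: for $f \in \dom(S_K^*) \subseteq \dom(S^*)$, write $f = u + (S_F)^{-1}k_1 + k_2$ as in \eqref{S*}, so that $S_K^* f = S^* f = Su + k_1$; pairing against an arbitrary $w \in \ker(S^*) \subseteq \dom(S_K)$, where $S_K w = 0$, gives $0 = (w, S_K^* f)_\cH = (w, k_1)_\cH$ (since $Su \perp \ker(S^*)$), hence $k_1 = 0$ and $f \in \dom(S_K)$, i.e.\ $S_K = S_K^*$, which is \eqref{SK}. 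The identities \eqref{Fr-4Tf} are then immediate: $\ker(S_K) = \{u+w : Su = 0\} = \ker(S^*) = \ran(S)^\perp$, and $\ker(S_K) = \ker\!\big(S_K^{1/2}\big)$ by the general fact recorded before the theorem.

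\emph{Minimality --- the main obstacle.} It remains to show $S_K \leq \widetilde S$ for every nonnegative self-adjoint extension $\widetilde S$ of $S$, and this is the step I expect to be the real work, since --- unlike maximality of $S_F$ --- it is not formal: it forces, in particular, the form-domain inclusion $\dom\!\big(\widetilde S^{1/2}\big) \subseteq \dom\!\big(S_K^{1/2}\big)$. I would derive it from the resolvent characterization recorded above, namely by proving $(\widetilde S + a I_\cH)^{-1} \leq (S_K + a I_\cH)^{-1}$ for all $a > 0$; concretely this means checking that, within Krein's parametrization of the resolvents of all nonnegative self-adjoint extensions of $S$, the one yielding $S_K$ is maximal --- equivalently, on the form side, that $S_K$ is the operator attached to the \emph{largest} admissible form domain $\dom(\overline{\mathfrak{s}})\dotplus\ker(S^*)$ equipped with the smallest closed form compatible with extending $\mathfrak{s}$ (this form being $\overline{\mathfrak{s}}$ on the first summand and $0$ on $\ker(S^*)$, which one verifies agrees with the form of the $S_K$ just constructed). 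Together with the maximality of $S_F$ this gives \eqref{Fr-Sa}; uniqueness of $S_K$ and $S_F$ as the extremal nonnegative self-adjoint extensions is then automatic, and $S_F \geq \varepsilon I_\cH$ was already noted above.
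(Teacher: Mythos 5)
The paper offers no proof of Theorem \ref{t2.1}: it is quoted from Krein's 1947 paper (with the extremality statement in the sense of \eqref{AleqBjussi1}--\eqref{AleqBjussi2} going back to Krein and to Alonso--Simon), so there is no internal argument to compare yours against. Measured against the classical proof your outline follows, the parts you actually carry out are correct and standard: the Friedrichs construction and its maximality via minimality of the form closure, the definition of $S_K$ on $\dom(S)\dotplus\ker(S^*)$, the derivation of \eqref{SF} and \eqref{S*} from $\cH=\ran(S)\oplus\ker(S^*)$ together with surjectivity of $S_F$, the self-adjointness argument giving \eqref{SK}, and the kernel identities \eqref{Fr-4Tf}.

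Two genuine gaps remain. First, the minimality of $S_K$ --- which you rightly flag as the crux --- is not actually argued: ``checking that, within Krein's parametrization of the resolvents of all nonnegative self-adjoint extensions, the one yielding $S_K$ is maximal'' presupposes precisely the order-theoretic analysis of the extension family whose output is the extremality you are trying to prove, and the form-side reformulation (that $\dom\big(\overline{\mathfrak{s}}\big)\dotplus\ker(S^*)$ is the \emph{largest} admissible form domain) is likewise the statement to be proved, not a fact already available. A non-circular route is the Ando--Nishio characterization: one shows that $f\in\dom\big((S_K)^{1/2}\big)$ if and only if $\sup_{g\in\dom(S)}|(f,Sg)_{\cH}|^2/(g,Sg)_{\cH}<\infty$, the supremum then equaling $\big\|(S_K)^{1/2}f\big\|_{\cH}^2$; for any nonnegative self-adjoint extension $\widetilde S$ and $f\in\dom\big(\widetilde S^{1/2}\big)$, the Cauchy--Schwarz inequality for the form of $\widetilde S$ gives $|(f,Sg)_{\cH}|^2\leq\big\|\widetilde S^{1/2}f\big\|_{\cH}^2\,(g,Sg)_{\cH}$, which is exactly $S_K\leq\widetilde S$ in the sense of \eqref{AleqBjussi1}--\eqref{AleqBjussi2}. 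Second, the theorem is an equivalence and you only address the ``only if'' half: the claim that a nonnegative self-adjoint operator $\widetilde S$ with $S_K\leq\widetilde S\leq S_F$ is \emph{necessarily an extension of} $S$ is a separate, nontrivial assertion nowhere treated in the proposal. (The reduction of the merely nonnegative case to $S\geq\varepsilon I_{\cH}$ via $S+\delta I_{\cH}$, $\delta\downarrow 0$, is also only asserted; for $S_K$ this limit is exactly where the Ando--Nishio construction does its work and deserves at least a monotone-form-convergence argument.)
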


One calls $S_K$ the {\it Krein--von Neumann extension} of $S$ and $S_F$ the 
{\it Friedrichs extension} of $S$. We also recall that 
\begin{equation}\label{yaf74df}
S_F=S^*|_{\dom(S^*)\cap\dom((S_{F})^{1/2})}.
\end{equation}
Furthermore, if $S\geq\varepsilon I_{\cH}$ then \eqref{SK} implies 
\begin{equation}\label{yaere}
\ker(S_K)=\ker\big((S_K)^{1/2}\big)=\ker(S^*)=\ran(S)^{\bot}.
\end{equation} 

For abstract results regarding the parametrization of all nonnegative self-adjoint extensions 
of a given strictly positive, densely defined, symmetric operator we refer the reader to 
Krein \cite{Kr47}, Vi{\v s}ik \cite{Vi63}, Birman \cite{Bi56}, Grubb \cite{Gr68,Gr70}, 
subsequent expositions due to Alonso and Simon \cite{AS80}, Faris \cite[Sect.\ 15]{Fa75}, 
and \cite[Sect.~13.2]{Gr09}, \cite[Ch.~13]{Sc12}, and Derkach and Malamud \cite{DM91}, 
Malamud \cite{Ma92}, see also \cite[Theorem~9.2]{GM11}.

Let us collect a basic assumption which will be imposed in the rest of this section.

\begin{hypothesis}\lb{h2.2}
Suppose that $S$ is a densely defined, symmetric, closed operator with nonzero deficiency 
indices in $\cH$ that satisfies $S\geq\varepsilon I_{\cH}$ for some $\varepsilon >0$. 
\end{hypothesis}

For subsequent purposes we note that under Hypothesis~\ref{h2.2}, one has 
\begin{equation}\lb{jajagutgut}
\dim\big(\ker(S^*-z I_{\cH})\big)=\dim\big(\ker(S^*)\big),
\quad z\in\bbC\backslash[\varepsilon,\infty).  
\end{equation}

We recall that two self-adjoint extensions $S_1$ and $S_2$ of $S$ are called 
{\it relatively prime} (or {\it disjoint}) if $\dom (S_1)\cap\dom (S_2)=\dom (S)$. 
The following result will play a role later on (cf., e.g., \cite[Lemma~2.8]{AGMT10} for an 
elementary proof): 

\begin{lemma}\lb{l2.3}
Suppose Hypothesis~\ref{h2.2}. Then the Friedrichs extension $S_F$ and the Krein--von 
Neumann extension $S_K$ of $S$ are relatively prime, that is,
\begin{equation}\label{utrre}
\dom (S_F)\cap\dom(S_K)=\dom(S). 
\end{equation}
\end{lemma}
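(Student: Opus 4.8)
The plan is to show the non-trivial inclusion $\dom(S_F)\cap\dom(S_K)\subseteq\dom(S)$, since the reverse inclusion $\dom(S)\subseteq\dom(S_F)\cap\dom(S_K)$ is immediate from the fact that both $S_F$ and $S_K$ are extensions of $S$. First I would take an arbitrary $u\in\dom(S_F)\cap\dom(S_K)$ and invoke the direct sum decompositions \eqref{SF} and \eqref{SK} from Theorem~\ref{t2.1}. Writing $u$ according to its membership in $\dom(S_K)=\dom(S)\dotplus\ker(S^*)$, we have $u=f_1+g_1$ with $f_1\in\dom(S)$ and $g_1\in\ker(S^*)$; writing $u$ according to $\dom(S_F)=\dom(S)\dotplus(S_F)^{-1}\ker(S^*)$, we have $u=f_2+(S_F)^{-1}g_2$ with $f_2\in\dom(S)$ and $g_2\in\ker(S^*)$.

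Next I would equate the two representations: $f_1+g_1=f_2+(S_F)^{-1}g_2$, hence $g_1-(S_F)^{-1}g_2=f_2-f_1\in\dom(S)$. The key observation is that $g_1\in\ker(S^*)$ and $(S_F)^{-1}g_2\in(S_F)^{-1}\ker(S^*)$, so the element $g_1-(S_F)^{-1}g_2$ lies in the subspace $\ker(S^*)\dotplus(S_F)^{-1}\ker(S^*)$. Now I would appeal to the decomposition \eqref{S*}, namely $\dom(S^*)=\dom(S)\dotplus(S_F)^{-1}\ker(S^*)\dotplus\ker(S^*)$, which asserts in particular that the sum of the three subspaces $\dom(S)$, $(S_F)^{-1}\ker(S^*)$, and $\ker(S^*)$ is direct. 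Therefore an element that lies simultaneously in $\dom(S)$ and in $\ker(S^*)\dotplus(S_F)^{-1}\ker(S^*)$ must be zero. Consequently $f_2-f_1=0$ and $g_1=(S_F)^{-1}g_2$; but again by directness of the sum in \eqref{S*}, having $g_1\in\ker(S^*)$ equal to $(S_F)^{-1}g_2\in(S_F)^{-1}\ker(S^*)$ forces $g_1=0$. Hence $u=f_1\in\dom(S)$, which is what we wanted.

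I do not anticipate a serious obstacle here, as the argument is essentially bookkeeping with the direct-sum structure already recorded in Theorem~\ref{t2.1}; the one point that requires a little care is making sure Hypothesis~\ref{h2.2} (in particular nonzero deficiency indices and strict positivity of $S$, which guarantees $(S_F)^{-1}$ exists as a bounded operator and that the decompositions \eqref{SF}--\eqref{S*} are valid) is genuinely being used, and that the sums appearing are interpreted as algebraic direct sums so that "the intersection of two summands is $\{0\}$'' is a legitimate move. An alternative, slightly slicker route would be to note that $\dom(S_F)\cap\dom(S_K)\subseteq\dom(S^*)$ and then use \eqref{S*} directly to write any such $u$ as $u=f+(S_F)^{-1}g+h$ with $f\in\dom(S)$, $g,h\in\ker(S^*)$, and then argue that membership in $\dom(S_K)=\dom(S)\dotplus\ker(S^*)$ forces $g=0$ while membership in $\dom(S_F)=\dom(S)\dotplus(S_F)^{-1}\ker(S^*)$ forces $h=0$, leaving $u=f\in\dom(S)$. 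Either way the proof is short and purely structural.
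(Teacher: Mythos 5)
Your argument is correct and is essentially the same elementary, purely structural proof the paper points to (it cites \cite[Lemma~2.8]{AGMT10} rather than proving the lemma in-text): one uses the directness of the von Neumann--Krein decompositions \eqref{SF}--\eqref{S*} to force the $\ker(S^*)$ and $(S_F)^{-1}\ker(S^*)$ components of $u$ to vanish. Both of your routes are valid, and your second one is the cleaner way to write it up.
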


Next, we consider a self-adjoint operator $T$ in $\cH$ which is bounded from below, 
that is, $T\geq\alpha I_{\cH}$ for some $\alpha\in\bbR$. We denote by 
$\{E_T(\lambda)\}_{\lambda\in\bbR}$ the family of strongly right-continuous spectral 
projections of $T$, and introduce for $-\infty\leq a<b$,  as usual, 
\begin{equation}\label{74ed}
E_T\big((a,b)\big)=E_T(b_{-})-E_T(a)\quad\text{and}\quad 
E_T(b_{-})=\slim_{\varepsilon\downarrow 0}E_T(b-\varepsilon).
\end{equation}  
In addition, we set 
\begin{equation}\label{i5r}
\mu_{T,j}:=\inf\,\big\{\lambda\in\bbR\,\big|\,
\dim(\ran(E_T((-\infty,\lambda))))\geq j\big\},\quad j\in\bbN.
\end{equation} 
Then, for fixed $k\in\bbN$, either: 
\\
$(i)$ $\mu_{T,k}$ is the $k$th eigenvalue of $T$ counting multiplicity 
below the bottom of the essential spectrum, $\sigma_{ess}(T)$, of $T$, 
\\
or, 
\\
$(ii)$ $\mu_{T,k}$ is the bottom of the essential spectrum of $T$, 
\begin{equation}\label{85f4}
\mu_{T,k}=\inf\,\big\{\lambda\in\bbR\,\big|\,\lambda\in\sigma_{ess}(T)\big\}, 
\end{equation}
and in that case $\mu_{T,k+\ell}=\mu_{T,k}$, $\ell\in\bbN$, and there are at 
most $k-1$ eigenvalues (counting multiplicity) of $T$ below $\mu_{T,k}$. 

We now record a basic result of M. Krein \cite{Kr47} with an extension due 
to Alonso and Simon \cite{AS80} and some additional results recently derived in 
\cite{AGMST10}. For this purpose we introduce the {\it reduced  
Krein--von Neumann operator} $\hatt S_K$ in the Hilbert space 
\begin{equation}\lb{hattH}
\hatt\cH:=\big(\ker(S^*)\big)^{\bot}=\big(\ker(S_K)\big)^{\bot}  
\end{equation}
by 
\begin{align}\lb{2.17}
\hatt{S}_K & :=P_{(\ker(S_K))^{\bot}} S_K|_{(\ker(S_K))^{\bot}},  
\quad\dom(\hatt{S}_K)=\dom S_K\cap\hatt\cH,      
\end{align} 
where $P_{(\ker(S_K))^\bot}$ denotes the orthogonal projection onto $(\ker(S_K))^\bot$.
One then obtains  
\begin{equation}\lb{SKinv}
\big(\hatt{S}_K\big)^{-1}=P_{(\ker(S_K))^{\bot}}(S_F)^{-1}|_{(\ker(S_K))^{\bot}},    
\end{equation}
a relation due to Krein \cite[Theorem~26]{Kr47} (see also \cite[Corollary~5]{Ma92}).

\begin{theorem}\lb{t2.4}
Suppose Hypothesis~\ref{h2.2}. Then
\begin{equation}\lb{Barr-5}
\varepsilon\leq\mu_{S_F,j}\leq\mu_{\hatt S_K,j},   \quad j\in\bbN.
\end{equation} 
In particular, if the Friedrichs extension $S_F$ of $S$ has purely discrete
spectrum, then, except possibly for $\lambda=0$, the Krein--von Neumann extension
$S_K$ of $S$ also has purely discrete spectrum in $(0,\infty)$, that is, 
\begin{equation}\lb{ESSK}
\sigma_{ess}(S_F)=\emptyset\,\text{ implies }\,\sigma_{ess}(S_K)\subseteq\{0\}.      
\end{equation}
In addition, if $p\in (0,\infty]$, then $(S_F-z_0 I_{\cH})^{-1}\in\cB_p(\cH)$ 
for some $z_0\in\bbC\backslash [\varepsilon,\infty)$ implies
\begin{equation}\lb{CPK}
(S_K-zI_{\cH})^{-1}\big|_{(\ker(S_K))^{\bot}}\in\cB_p\big(\hatt \cH\big) 
\,\text{ for all $z\in\bbC\backslash [\varepsilon,\infty)$}.  
\end{equation}
In fact, the $\ell^p(\bbN)$-based trace ideal $\cB_p(\cH)$ 
$\big($resp., $\cB_p\big(\hatt \cH\big)$$\big)$  
of $\cB(\cH)$ $\big($resp., $\cB\big(\hatt \cH\big)$$\big)$ can be 
replaced by any two-sided symmetrically normed ideal of $\cB(\cH)$ 
$\big($resp., $\cB\big(\hatt \cH\big)$$\big)$.
\end{theorem}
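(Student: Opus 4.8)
The plan is to derive all four assertions from the single structural identity \eqref{SKinv}, which exhibits $\big(\hatt S_K\big)^{-1}$ as the orthogonal compression of $(S_F)^{-1}$ to the reducing subspace $\hatt\cH=(\ker(S_K))^{\bot}$. First I would record that, since $S_F\geq\varepsilon I_{\cH}$, the operator $B:=(S_F)^{-1}$ is bounded, self-adjoint, and nonnegative with $\|B\|\leq\varepsilon^{-1}$. Both $\hatt\cH$ and $\ker(S_K)$ reduce the self-adjoint operator $S_K$, so $\hatt S_K$ is self-adjoint in $\hatt\cH$, and writing $P:=P_{(\ker(S_K))^{\bot}}$, relation \eqref{SKinv} reads $\big(\hatt S_K\big)^{-1}=PBP\big|_{\hatt\cH}$; in particular $\big(\hatt S_K\big)^{-1}$ is bounded, nonnegative, with $\big\|\big(\hatt S_K\big)^{-1}\big\|\leq\varepsilon^{-1}$, so $\hatt S_K\geq\varepsilon I_{\hatt\cH}$.

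For the chain \eqref{Barr-5} the lower bound $\varepsilon\leq\mu_{S_F,j}$ is immediate from $S_F\geq\varepsilon I_{\cH}$, so the content is $\mu_{S_F,j}\leq\mu_{\hatt S_K,j}$. Here I would pass to the bounded inverses and invoke the Courant--Fischer principle for compressions: for a bounded nonnegative self-adjoint operator $A$ in $\cH$ put
\[
s_j(A):=\sup\big\{\inf\nolimits_{v\in V,\,\|v\|=1}(v,Av)_{\cH}\,\big|\,V\subseteq\cH,\ \dim V=j\big\},\quad j\in\bbN.
\]
The spectral theorem, together with $S_F,\hatt S_K\geq\varepsilon I$, identifies $s_j(B)=1/\mu_{S_F,j}$ and $s_j\big(\big(\hatt S_K\big)^{-1}\big)=1/\mu_{\hatt S_K,j}$ (with the natural convention accommodating the bottom-of-essential-spectrum alternative in \eqref{85f4}). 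Since every $j$-dimensional subspace $V\subseteq\hatt\cH$ is also a $j$-dimensional subspace of $\cH$, on which $(v,PBPv)_{\cH}=(v,Bv)_{\cH}$, one obtains $s_j\big(\big(\hatt S_K\big)^{-1}\big)\leq s_j(B)$, hence $1/\mu_{\hatt S_K,j}\leq1/\mu_{S_F,j}$, i.e., \eqref{Barr-5}. The step requiring genuine care — and the main, if essentially routine, obstacle — is the precise translation of the $\mu_{T,j}$ bookkeeping (a genuine eigenvalue below $\sigma_{ess}$ versus the bottom of $\sigma_{ess}$) through the map $T\mapsto T^{-1}$.

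Assertion \eqref{ESSK} then follows at once: if $\sigma_{ess}(S_F)=\emptyset$ then $\mu_{S_F,j}\to\infty$ as $j\to\infty$, so by \eqref{Barr-5} also $\mu_{\hatt S_K,j}\to\infty$, which forces $\sigma_{ess}\big(\hatt S_K\big)=\emptyset$. Since $S_K$ is reduced by the orthogonal decomposition $\cH=\ker(S_K)\oplus\hatt\cH$, acting as $0$ on $\ker(S_K)$ and as $\hatt S_K$ on $\hatt\cH$ (cf.\ \eqref{hattH}, \eqref{2.17}), one concludes $\sigma_{ess}(S_K)\subseteq\sigma_{ess}\big(\hatt S_K\big)\cup\{0\}\subseteq\{0\}$.

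For the trace-ideal claims I would proceed as follows. The resolvent identity
\[
(S_F-zI_{\cH})^{-1}-(S_F-z_0I_{\cH})^{-1}=(z-z_0)(S_F-zI_{\cH})^{-1}(S_F-z_0I_{\cH})^{-1}
\]
together with the two-sided ideal property of $\cB_p(\cH)$ shows that $(S_F-z_0I_{\cH})^{-1}\in\cB_p(\cH)$ for some $z_0\in\bbC\backslash[\varepsilon,\infty)$ implies $(S_F-zI_{\cH})^{-1}\in\cB_p(\cH)$ for all $z\in\bbC\backslash[\varepsilon,\infty)$; in particular $B=(S_F)^{-1}\in\cB_p(\cH)$. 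By \eqref{SKinv} and the ideal property, $PBP\in\cB_p(\cH)$, and since $\hatt\cH$ reduces this operator, its restriction $\big(\hatt S_K\big)^{-1}=PBP|_{\hatt\cH}$ lies in $\cB_p\big(\hatt\cH\big)$; one further application of the resolvent identity for $\hatt S_K$ (using $\hatt S_K\geq\varepsilon I_{\hatt\cH}$) upgrades this to $\big(\hatt S_K-zI_{\hatt\cH}\big)^{-1}\in\cB_p\big(\hatt\cH\big)$ for all $z\in\bbC\backslash[\varepsilon,\infty)$, which is \eqref{CPK} upon identifying $(S_K-zI_{\cH})^{-1}\big|_{(\ker(S_K))^{\bot}}=\big(\hatt S_K-zI_{\hatt\cH}\big)^{-1}$. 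Since this chain of arguments uses only that the class in question is a linear, two-sided ideal stable under restriction to reducing subspaces, it carries over verbatim to an arbitrary two-sided symmetrically normed ideal of $\cB(\cH)$, establishing the final claim.
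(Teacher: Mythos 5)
Your argument is correct, but note that the paper itself offers no proof of Theorem \ref{t2.4}: it is recalled from the literature, with \eqref{ESSK} attributed to Krein, \eqref{Barr-5} to Alonso--Simon, and \eqref{CPK} to \cite{AGMST10}. Your route --- deriving everything from Krein's identity \eqref{SKinv}, i.e., from the fact that $\big(\hatt S_K\big)^{-1}$ is the compression of $(S_F)^{-1}$ to the reducing subspace $\hatt\cH$ --- is the standard one underlying those references, and each step checks out: the max--min comparison for compressions correctly yields $s_j\big(\big(\hatt S_K\big)^{-1}\big)\leq s_j\big((S_F)^{-1}\big)$ and hence \eqref{Barr-5} after inverting (you rightly flag the only delicate point, namely matching the two alternatives in the definition \eqref{i5r}/\eqref{85f4} under $T\mapsto T^{-1}$, which works because $t\mapsto 1/t$ maps eigenvalues to eigenvalues and essential spectrum to essential spectrum on $[\varepsilon,\infty)$); \eqref{ESSK} follows from \eqref{Barr-5} together with the decomposition $S_K=0\oplus\hatt S_K$ on $\cH=\ker(S_K)\oplus\hatt\cH$; and the $\cB_p$ (and symmetrically normed ideal) claims follow from the two-sided ideal property, the resolvent identity, and the fact that restricting to a reducing subspace does not increase singular values. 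The one cosmetic point worth making explicit is the interpretation of \eqref{CPK} at $z=0$ (where $(S_K-zI_{\cH})^{-1}$ need not exist globally but the restricted inverse $\big(\hatt S_K\big)^{-1}$ does), which you implicitly resolve by identifying the restricted resolvent with $\big(\hatt S_K-zI_{\hatt\cH}\big)^{-1}$.
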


We note that \eqref{ESSK} is a classical result of Krein \cite{Kr47}. Apparently, \eqref{Barr-5} 
in the context of infinite deficiency indices was first proven by Alonso and Simon \cite{AS80} 
by a somewhat different method. Relation \eqref{CPK} was proved in \cite{AGMST10}.

Assuming that $S_F$ has purely discrete spectrum, let 
$\{\lambda_{K, j}\}_{j\in\bbN}\subset(0,\infty)$ be the strictly positive eigenvalues 
of $S_K$ enumerated in nondecreasing order, counting multiplicity, and let
\begin{equation} 
N(\lambda,S_K):=\#\{j\in\bbN\,|\,0<\lambda_{K,j} < \lambda\},  \quad \lambda > 0,    \lb{2.22} 
\end{equation}
be the eigenvalue distribution function for $S_K$. Similarly, let 
$\{\lambda_{F, j}\}_{j\in\bbN}\subset(0,\infty)$ denote the eigenvalues 
of $S_F$, again enumerated in nondecreasing order, counting multiplicity, and by 
\begin{equation} 
N(\lambda,S_F):=\#\{j\in\bbN\,|\, \lambda_{F,j} < \lambda\}, 
\quad \lambda > 0,
\end{equation}  
the corresponding eigenvalue counting function for $S_F$. Then inequality 
\eqref{Barr-5} implies
\begin{equation}
N(\lambda,S_K) \leq N(\lambda,S_F), \quad \lambda > 0.   \lb{2.24} 
\end{equation}
In particular, any estimate for the eigenvalue counting function for the Friedrichs extension 
$S_F$, in turn, yields one for the Krein--von Neumann extension $S_K$ 
(focusing on strictly positive eigenvalues of $S_K$ according to \eqref{2.22}). 
While this is a viable approach to estimate the eigenvalue counting function \eqref{2.22} for  
$S_K$, we will proceed along a different route in Section \ref{s3} and directly 
exploit the one-to-one corrspondence between strictly positive eigenvalues of $S_K$ 
and the eigenvalues of its underlying abstract buckling problem to be described next.

To describe the abstract buckling problem naturally associated with the Krein--von Neumann 
extension as described in \cite{AGMST10}, we start by introducing an abstract version of 
\cite[Proposition\ 1]{Gr83} (see \cite{AGMST10} for a proof):

\begin{lemma}\lb{l2.5}
Assume Hypothesis~\ref{h2.2} and let $\lambda\in\bbC\backslash\{0\}$. 
Then there exists some $f\in\dom(S_K)\backslash\{0\}$ with
\begin{equation}\lb{sk1}
S_K f=\lambda f   
\end{equation}
if and only if there exists $w\in\dom(S^* S)\backslash\{0\}$ such that
\begin{equation}\lb{sk2}
S^* Sw=\lambda S w.   
\end{equation}
In fact, the solutions $f$ of \eqref{sk1} are in one-to-one correspondence with the 
solutions $w$ of \eqref{sk2} as evidenced by the formulas
\begin{align}\label{yt5rr}
w & =(S_F)^{-1}S_K f,   \\ 
f & =\lambda^{-1}Sw.   
\end{align}
Of course, since $S_K\geq 0$ is self-adjoint, any $\lambda\in\bbC\backslash\{0\}$ 
in \eqref{sk1} and \eqref{sk2}  necessarily satisfies $\lambda\in(0,\infty)$.
\end{lemma}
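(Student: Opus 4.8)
The plan is to establish the two directions of the equivalence by direct verification that the indicated formulas carry a solution of one problem to a solution of the other, and then to check that the two maps are mutually inverse. First I would prove the implication \eqref{sk1} $\Rightarrow$ \eqref{sk2}. Suppose $0\neq f\in\dom(S_K)$ with $S_Kf=\lambda f$, $\lambda\neq 0$. Since $S\geq\varepsilon I_\cH$, Theorem~\ref{t2.1} gives $S_F\geq\varepsilon I_\cH$, so $(S_F)^{-1}\in\cB(\cH)$ is well defined; set $w:=(S_F)^{-1}S_Kf=\lambda(S_F)^{-1}f$. Then $w\in\dom(S_F)$ and $S_Fw=\lambda f=S_Kf$. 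Using the decomposition \eqref{SK}, write $f=f_0\dotplus k$ with $f_0\in\dom(S)$ and $k\in\ker(S^*)$; then $S_Kf=Sf_0$ (the action of $S_K$ on $\dom(S)$ agrees with $S$, and $S_K$ annihilates $\ker(S^*)$ by \eqref{Fr-4Tf}), so $S_Fw=Sf_0\in\ran(S)$. I would then argue $w\in\dom(S)$: indeed $w\in\dom(S_F)=\dom(S)\dotplus(S_F)^{-1}\ker(S^*)$ by \eqref{SF}, write $w=g\dotplus(S_F)^{-1}h$ with $g\in\dom(S)$, $h\in\ker(S^*)$; applying $S_F$ gives $Sg+h=S_Fw=Sf_0\in\ran(S)=\ker(S^*)^\perp$, which forces $h=0$ by orthogonality, hence $w=g\in\dom(S)$ and $Sw=Sf_0=S_Kf=\lambda f$. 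In particular $f=\lambda^{-1}Sw$, so $w\neq 0$. Finally, since $Sw=\lambda f\in\dom(S_K)\subseteq\dom(S^*)$ and $S^*$ restricted to $\dom(S_K)$ acts as $S_K$ on the $\dom(S)$-component while killing $\ker(S^*)$, one computes $S^*Sw=S^*(\lambda f)=\lambda S_Kf=\lambda^2 f=\lambda Sw$, i.e.\ $w\in\dom(S^*S)$ and \eqref{sk2} holds.

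Next I would prove the reverse implication \eqref{sk2} $\Rightarrow$ \eqref{sk1}. Suppose $0\neq w\in\dom(S^*S)$ with $S^*Sw=\lambda Sw$, $\lambda\neq 0$, and set $f:=\lambda^{-1}Sw$. Then $Sw=\lambda f$ and $S^*(Sw)=\lambda Sw$ reads $S^*f=\lambda f$, so in particular $f\in\dom(S^*)$. I claim $f\in\dom(S_K)$. By \eqref{S*}, $\dom(S^*)=\dom(S)\dotplus(S_F)^{-1}\ker(S^*)\dotplus\ker(S^*)$; decompose $f=f_1\dotplus(S_F)^{-1}h\dotplus k$ accordingly. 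Applying $S^*$ and using $S^*\!\upharpoonright\dom(S)=S$, $S^*(S_F)^{-1}h=h$ (valid since $(S_F)^{-1}h\in\dom(S_F)\subseteq\dom(S^*)$ and $S_F\subseteq S^*$), and $S^*k=0$, we get $Sf_1+h=S^*f=\lambda f=\lambda f_1\dotplus\lambda(S_F)^{-1}h\dotplus\lambda k$. Here $Sf_1\in\ran(S)$ and $h\in\ker(S^*)=\ran(S)^\perp$, while on the right the component $\lambda(S_F)^{-1}h$ lies in $(S_F)^{-1}\ker(S^*)$, a space intersecting $\dom(S)\dotplus\ker(S^*)$ only in $\{0\}$ by \eqref{S*}; matching components in the direct sum \eqref{S*} forces $(S_F)^{-1}h=0$, hence $h=0$, so $f=f_1\dotplus k\in\dom(S)\dotplus\ker(S^*)=\dom(S_K)$ by \eqref{SK}, and $S_Kf=Sf_1$. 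Matching the remaining components of $Sf_1=\lambda f_1\dotplus\lambda k$ in $\ran(S)\dotplus\ker(S^*)$ (note $Sf_1\in\ran(S)$, so its $\ker(S^*)$-component vanishes) gives $\lambda k=0$ (as $k$ lies in $\ker(S^*)$ which is complementary), whence $k=0$ if $\lambda\neq 0$; thus $f=f_1$ and $S_Kf=Sf_1=\lambda f_1=\lambda f$. That $f\neq 0$ follows since $f=0$ would give $Sw=\lambda f=0$, hence $w\in\ker(S)=\{0\}$ (as $S\geq\varepsilon I_\cH$), contradicting $w\neq 0$.

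It then remains to verify that the maps $f\mapsto w=(S_F)^{-1}S_Kf$ and $w\mapsto f=\lambda^{-1}Sw$ are inverse to one another. Starting from an eigenfunction $f$ of $S_K$ and producing $w=(S_F)^{-1}S_Kf$, the computation in the first paragraph showed $Sw=S_Kf=\lambda f$, so $\lambda^{-1}Sw=f$; conversely, starting from $w$ and producing $f=\lambda^{-1}Sw$, we have $(S_F)^{-1}S_Kf=\lambda^{-1}(S_F)^{-1}S_KSw$, and since $f\in\dom(S_K)$ with $S_Kf=S^*f=S^*(\lambda^{-1}Sw)=\lambda^{-1}S^*Sw$, the relation $S_Fw=\lambda f$ (to be checked: $w\in\dom(S_F)$ because $w\in\dom(S)\subseteq\dom(S_F)$ and $S_Fw=Sw=\lambda f$) gives $(S_F)^{-1}S_Kf=(S_F)^{-1}(\lambda f)=(S_F)^{-1}S_Fw=w$. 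For the very last assertion, self-adjointness and nonnegativity of $S_K$ force $\sigma(S_K)\subseteq[0,\infty)$, so any nonzero eigenvalue $\lambda$ satisfies $\lambda\in(0,\infty)$. The main obstacle I anticipate is the bookkeeping in the direct-sum decompositions \eqref{SF}--\eqref{S*}: one must repeatedly use that $\ran(S)=\ker(S^*)^\perp$ (from \eqref{Fr-4Tf}) together with the \emph{directness} of the sums to annihilate the $(S_F)^{-1}\ker(S^*)$- and $\ker(S^*)$-components, and it is easy to conflate "direct sum" with "orthogonal sum" — only the orthogonality $\ran(S)\perp\ker(S^*)$ is available, not orthogonality of all three summands. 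Care is also needed to justify $(S_F)^{-1}$ is bounded and everywhere defined, which rests on $S_F\geq\varepsilon I_\cH$ from Theorem~\ref{t2.1} under Hypothesis~\ref{h2.2}.
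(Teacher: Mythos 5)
Your forward direction and the mutual-inverse verification are sound (indeed, $w=(S_F)^{-1}S_Kf$ is nothing but the $\dom(S)$-component $f_0$ of $f$ in \eqref{SK}, since $S_Kf=Sf_0=S_Ff_0$, which shortens that part considerably). The converse direction, however, contains a genuine defect. Having obtained $S^*f=\lambda f$, you decompose $f=f_1\dotplus(S_F)^{-1}h\dotplus k$ via \eqref{S*} and equate $Sf_1+h=\lambda f_1+\lambda(S_F)^{-1}h+\lambda k$. Your elimination of $h$ by ``matching components in \eqref{S*}'' is not legitimate as written: the left-hand side is never exhibited in the \eqref{S*}-decomposition, because $Sf_1$ lies in $\ran(S)$, which is not contained in $\dom(S)\dotplus\ker(S^*)$. (The conclusion $h=0$ is correct and easily repaired using orthogonality instead: $h=\lambda f-Sf_1=Sw-Sf_1\in\ran(S)\cap\ker(S^*)=\{0\}$ by \eqref{Fr-4Tf}, since $f=\lambda^{-1}Sw\in\ran(S)$.) The next step is actually false: from $Sf_1=\lambda f_1+\lambda k$ you infer $\lambda k=0$ by treating $\lambda f_1\dotplus\lambda k$ as the decomposition along $\ran(S)\oplus\ker(S^*)$, but $f_1\in\dom(S)$ need not lie in $\ran(S)$, so the $\ker(S^*)$-component of the right-hand side is $\lambda k$ \emph{plus} the orthogonal projection of $\lambda f_1$ onto $\ker(S^*)$. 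In general $k\neq0$: $k=0$ would force every eigenfunction of $S_K$ with nonzero eigenvalue to lie in $\dom(S)$ and hence to be an eigenfunction of the symmetric operator $S$ itself, which already fails for $m=n=1$, $\Omega=(a,b)$ (there $S=-d^2/dx^2$ on $\mathring W^{2}((a,b))$ has no eigenvalues, while $A_{K,(a,b),1}$ has infinitely many positive ones).

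Fortunately the false claim is not needed for the lemma: once $h=0$ you already have $f=f_1\dotplus k\in\dom(S)\dotplus\ker(S^*)=\dom(S_K)$ by \eqref{SK}, and then $S_Kf=S^*f=\lambda f$ follows immediately from $S_K\subseteq S^*$ (or from $S_Kf=Sf_1=\lambda f_1+\lambda k=\lambda f$), so you should simply delete the $k=0$ step. There is also a much shorter route to $f\in\dom(S_K)$ that bypasses \eqref{S*} entirely: since $w\in\dom(S)\subseteq\dom(S^*)$ and $S^*w=Sw=\lambda f$, one gets $S^*(f-w)=S^*f-Sw=\lambda f-\lambda f=0$, hence $f=w\dotplus(f-w)\in\dom(S)\dotplus\ker(S^*)=\dom(S_K)$, with $w$ recovered as the $\dom(S)$-component of $f$ — which also makes the bijectivity of the correspondence \eqref{yt5rr} transparent.
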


It is the linear pencil eigenvalue problem $S^*Sw=\lambda Sw$ in \eqref{sk2} that we call the 
{\it abstract buckling problem} associated with the Krein--von Neumann extension $S_K$ of $S$.

Next, we turn to a variational formulation of the correspondence between the 
inverse of the reduced Krein--von Neumann extension $\hatt{S}_K$ and the abstract 
buckling problem in terms of appropriate sesquilinear forms by following 
\cite{Ko79}--\cite{Ko84} in the elliptic PDE context. This will then lead to an 
even stronger connection between the Krein--von Neumann extension $S_K$ of $S$ 
and the associated abstract buckling eigenvalue problem \eqref{sk2}, culminating 
in the unitary equivalence result in Theorem~\ref{t2.6} below. 

Given the operator $S$, we introduce the following symmetric forms in $\cH$,
\begin{align}\label{tarcd}
\mathfrak{a}(f,g) & :=(Sf,Sg)_{\cH},\quad f,g\in\dom(\mathfrak{a}):=\dom(S),    \\ 
\mathfrak{b}(f,g) & :=(f,Sg)_{\cH},\quad f,g\in\dom(\mathfrak{b}):=\dom(S).    
\end{align}
Then $S$ being densely defined and closed implies that the sesquilinear form $\mathfrak{a}$ 
shares these properties, while $S\geq\varepsilon I_{\cH}$ from Hypothesis~\ref{h2.2} 
implies that  $\mathfrak{a}$ is bounded from below, that is, 
\begin{equation}\lb{2.28}
\mathfrak{a}(f,f)\geq\varepsilon^2\|f\|_{\cH}^2,\quad f\in\dom(S).       
\end{equation} 
(The inequality \eqref{2.28} follows based on the assumption $S\geq\varepsilon I_{\cH}$ 
by estimating $(Sf,Sg)_{\cH}=\big([(S-\varepsilon I_{\cH})+\varepsilon I_{\cH}]f, 
[(S-\varepsilon I_{\cH})+\varepsilon I_{\cH}]g\big)_{\cH}$ from below.)

Thus, one can introduce the Hilbert space 
\begin{equation}\label{itre3er} 
\cW:=\big(\dom(S),(\cdot,\cdot)_{\cW}\big), 
\end{equation} 
with associated scalar product 
\begin{equation} 
(f,g)_{\cW}:=\mathfrak{a}(f,g)=(Sf,Sg)_{\cH},\quad f,g\in\dom(S). 
\end{equation}
In addition, we note that $\iota_{\cW}:\cW\hookrightarrow\cH$, the embedding operator of $\cW$  
into $\cH$, is continuous due to $S\geq\varepsilon I_{\cH}$. Hence, precise notation would be using  
\begin{equation}
(w_1,w_2)_{\cW}=\mathfrak{a}(\iota_{\cW} w_1,\iota_{\cW} w_2) 
=(S\iota_{\cW} w_1,S\iota_{\cW} w_2)_{\cH},\quad w_1,w_2\in\cW,   
\end{equation}
but in the interest of simplicity of notation we will omit the embedding 
operator $\iota_{\cW}$ in the following. 

With the sesquilinear forms $\mathfrak a$ and $\mathfrak b$ and the Hilbert space $\cW$ as 
above, given $w_2\in\cW$, the map $\cW\ni w_1\mapsto (w_1,S w_2)_\cH\in{\mathbb{C}}$ is continuous. 
This allows us to define the operator $Tw_2$ as the unique element in $\cW$ such that 
\begin{equation}\label{ur332}
(w_1,Tw_2)_{\cW}= (w_1,Sw_2)_{\cH}\,\text{ for all }\,w_1\in\cW.
\end{equation} 
This implies
\begin{equation}\label{oi7g4dc4}
\mathfrak{a}(w_1,Tw_2)=(w_1,Tw_2)_{\cW}=(w_1,Sw_2)_{\cH}=\mathfrak{b}(w_1,w_2) 
\end{equation}
for all $ w_1,w_2\in\cW$. In addition, the operator $T$ satisfies  
\begin{equation}\lb{2.33}
0\leq T=T^*\in\cB(\cW)\quad\text{and}\quad\|T\|_{\cB(\cW)}\leq\varepsilon^{-1}.   
\end{equation}
We will call $T$ the {\it abstract buckling problem operator} associated 
with the Krein--von Neumann extension $S_K$ of $S$.  

Next, recalling the notation $\hatt\cH=\big(\ker(S^*)\big)^{\bot}$ (cf. \eqref{hattH}), 
we introduce the operator
\begin{equation}\label{UYBgb}
\hatt{S}:\cW\to\hatt\cH,\quad w\mapsto S w.  
\end{equation}

Clearly, $\ran\big(\hatt{S}\,\big)=\ran(S)$ and since $S\geq\varepsilon I_{\cH}$ 
for some $\varepsilon>0$ and $S$ is closed in $\cH$, $\ran (S)$ is also closed, and 
hence coincides with $\big(\ker(S^*)\big)^\bot$. This yields 
\begin{equation}\label{7hOKI}
\ran\big(\hatt{S}\,\big)=\ran(S)=\hatt\cH.    
\end{equation} 
In fact, it follows that $\hatt{S}\in\cB(\cW,\hatt\cH)$ maps $\cW$ unitarily onto 
$\hatt\cH$ (cf. \cite{AGMST10}).

Continuing, we briefly recall the polar decomposition of $S$, 
\begin{equation}\lb{polar}
S=U_S|S|,  
\end{equation} 
where, with $\varepsilon > 0$ as in Hypothesis~\ref{h2.2}, 
\begin{equation}
|S|=(S^*S)^{1/2}\geq\varepsilon I_{\cH}\,\text{ and }\, 
U_S\in\cB\big(\cH,\hatt\cH\big)\,\text{ unitary.}  
\end{equation}

Then the principal unitary equivalence result proved in \cite{AGMST10} reads as follows: 

\begin{theorem}\lb{t2.6}
Assume Hypothesis~\ref{h2.2}. Then the inverse of the reduced Krein--von Neumann extension 
$\hatt S_K$ in $\hatt\cH$ and the abstract buckling problem operator $T$ in $\cW$ are 
unitarily equivalent. Specifically,
\begin{equation}\lb{11.20}
\big(\hatt{S}_K\big)^{-1}=\hatt{S}T\bigl(\hatt{S}\,\bigr)^{-1}.    
\end{equation}
In particular, the nonzero eigenvalues of $S_K$ are reciprocals of the eigenvalues of $T$.  
Moreover, one has
\begin{equation}\lb{11.20a}
\big(\hatt{S}_K\big)^{-1}=U_S\big[|S|^{-1}S|S|^{-1}\big](U_S)^{-1},    
\end{equation}
where $U_S\in\cB\big(\cH,\hatt\cH\big)$ is the unitary operator in the polar 
decomposition \eqref{polar} of $S$ and the operator $|S|^{-1}S|S|^{-1}\in\cB(\cH)$ 
is self-adjoint and strictly positive in $\cH$. 
\end{theorem}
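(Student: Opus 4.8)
The plan is to prove the identity \eqref{11.20} by a direct computation using the definitions of the objects involved, and then derive \eqref{11.20a} from \eqref{11.20} together with the polar decomposition \eqref{polar}. First I would observe that, by \eqref{SKinv}, we have $\big(\hatt S_K\big)^{-1} = P_{(\ker(S_K))^\bot}(S_F)^{-1}\big|_{(\ker(S_K))^\bot}$, and that $\hatt S : \cW \to \hatt\cH$ is unitary by \eqref{7hOKI} and the remark following it, so the right-hand side $\hatt S\, T\, \big(\hatt S\big)^{-1}$ of \eqref{11.20} is a well-defined bounded, self-adjoint, nonnegative operator on $\hatt\cH$. To establish the operator equality it suffices to show that for every $\hatt h \in \hatt\cH$ one has $\hatt S\, T\, \big(\hatt S\big)^{-1}\hatt h = P_{(\ker(S_K))^\bot}(S_F)^{-1}\hatt h$; writing $\hatt h = \hatt S w = Sw$ for the unique $w \in \cW = \dom(S)$, this amounts to verifying $S\,Tw = P_{(\ker(S_K))^\bot}(S_F)^{-1}Sw$.

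The key computation is to identify $STw$. By the defining property \eqref{ur332} of $T$, the element $Tw \in \cW = \dom(S)$ satisfies $(S w_1, S\,Tw)_\cH = (w_1, Sw)_\cH$ for all $w_1 \in \dom(S)$. I would next use the decomposition \eqref{SF} of $\dom(S_F)$ and the fact that $(S_F)^{-1}Sw \in \dom(S_F)$ (since $Sw \in \ran(S) = \hatt\cH \subseteq \cH$ and $S_F \geq \varepsilon I_\cH$ is boundedly invertible): write $(S_F)^{-1}Sw = g_0 + g_1$ with $g_0 \in \dom(S)$ and $g_1 \in (S_F)^{-1}\ker(S^*)$. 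Applying $S_F$ and using $S_F|_{\dom(S)} = S$ gives $Sw = S g_0 + S_F g_1$, and since $S_F g_1 \in \ker(S^*)$ while $Sw, Sg_0 \in \ran(S) = \ker(S^*)^\bot$, orthogonality forces $S_F g_1 = 0$, hence $g_1 = 0$ and $(S_F)^{-1}Sw = g_0 \in \dom(S)$. Then for all $w_1 \in \dom(S)$, $(Sw_1, S g_0)_\cH = (Sw_1, S_F g_0)_\cH = (w_1, S_F g_0)_\cH$ wait — more carefully, $(Sw_1, Sg_0)_\cH = (S_F w_1, S_F g_0)_\cH$; using that $g_0 \in \dom(S_F)$ and $S_F$ self-adjoint, this equals $(w_1, S_F^2 g_0)_\cH$ only if $w_1 \in \dom(S_F^2)$, so instead I would argue directly: $(Sw_1, Sg_0)_\cH = (w_1, S^* S g_0)_\cH$ is not immediate either since $g_0$ need not lie in $\dom(S^*S)$. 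The clean route is: since $g_0 = (S_F)^{-1}Sw$ and $S_F$ is self-adjoint with $S_F \supseteq S$, for $w_1 \in \dom(S)$ we have $(Sw_1, Sg_0)_\cH = (S_F w_1, S_F g_0)_\cH = (w_1, S_F^2 g_0)_\cH$? No. The correct statement is $(S_F w_1, S_F g_0)_\cH = (w_1, S_F(S_F g_0))_\cH$ provided $S_F g_0 \in \dom(S_F)$; but $S_F g_0 = Sw$, and $Sw \in \dom(S_F)$ is exactly what we need not assume. Instead: $S_F g_0 = Sw$, so $(S_F w_1, S_F g_0)_\cH = (S_F w_1, Sw)_\cH = (S w_1, Sw)_\cH$ — this does not obviously equal $(w_1, Sw)_\cH$.

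Let me reorganize: the honest comparison is between $(w_1, S\,Tw)_\cH$-type pairings. We have $S\,Tw \in \ran(S) = \hatt\cH$, and for $w_1 \in \dom(S)$, $(Sw_1, S\,Tw)_\cH = (w_1, Sw)_\cH$ by \eqref{ur332}. On the other hand, with $y := P_{(\ker(S_K))^\bot}(S_F)^{-1}Sw \in \hatt\cH$, I claim $y = S\,Tw$. Since both lie in $\hatt\cH = \ran(S) = \overline{\ran(S)}$, it suffices to check $(Sw_1, y)_\cH = (Sw_1, S\,Tw)_\cH = (w_1, Sw)_\cH$ for all $w_1 \in \dom(S)$, i.e. that $(Sw_1, (S_F)^{-1}Sw)_\cH = (w_1, Sw)_\cH$ (the projection $P_{(\ker(S_K))^\bot}$ may be dropped since $Sw_1 \in \ran(S) \perp \ker(S_K) = \ker(S^*)$). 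Now $(Sw_1, (S_F)^{-1}Sw)_\cH = (S_F w_1, (S_F)^{-1}Sw)_\cH$; using self-adjointness of $(S_F)^{-1}$ and $w_1 \in \dom(S) \subseteq \dom(S_F)$, this equals $((S_F)^{-1}S_F w_1, Sw)_\cH = (w_1, Sw)_\cH$, as desired. This establishes \eqref{11.20}. The assertion that nonzero eigenvalues of $S_K$ are reciprocals of eigenvalues of $T$ then follows since $\hatt S_K$ has the same nonzero spectrum as $S_K$ (its nonzero eigenvalues are exactly those of $S_K$ by \eqref{2.17}, \eqref{hattH}, and \eqref{Fr-4Tf}), $\big(\hatt S_K\big)^{-1}$ is unitarily equivalent to $T$, and $T$ is injective (from $\|T\|_{\cB(\cW)} \leq \varepsilon^{-1}$ and, if $Tw = 0$ then $(w_1, Sw)_\cH = 0$ for all $w_1 \in \dom(S)$, forcing $Sw = 0$ hence $w = 0$ by strict positivity).

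For \eqref{11.20a}: starting from \eqref{11.20}, I would substitute the polar decomposition $S = U_S|S|$ from \eqref{polar}, where $|S| = (S^*S)^{1/2} \geq \varepsilon I_\cH$ and $U_S \in \cB(\cH, \hatt\cH)$ is unitary. The operator $\hatt S : \cW \to \hatt\cH$ acts as $w \mapsto Sw = U_S|S|w$; thinking of $\cW$ as the set $\dom(S) = \dom(|S|)$ re-normed, the map $|S| : \cW \to \cH$, $w \mapsto |S|w$ is unitary onto $\cH$ (since $\||S|w\|_\cH^2 = (|S|w, |S|w)_\cH = (Sw, Sw)_\cH = \|w\|_\cW^2$), so $\hatt S = U_S \circ |S|$ as a composition of unitaries $\cW \to \cH \to \hatt\cH$. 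Hence $\big(\hatt S\big)^{-1} = |S|^{-1} \circ (U_S)^{-1}$ in the corresponding sense, and plugging into \eqref{11.20} gives
\begin{equation*}
\big(\hatt S_K\big)^{-1} = U_S |S|\, T\, |S|^{-1}(U_S)^{-1}.
\end{equation*}
It then remains to show $|S|\,T\,|S|^{-1} = |S|^{-1}S|S|^{-1}$ as operators on $\cH$, i.e. $T = |S|^{-2}S|S|^{-1} = (S^*S)^{-1}S|S|^{-1}$ under the identification of $\cW$ with $\cH$ via $|S|$. This follows from the defining relation \eqref{ur332}: for $w_1, w_2 \in \dom(S)$, $(w_1, Tw_2)_\cW = (|S|w_1, |S|Tw_2)_\cH$, which must equal $(w_1, Sw_2)_\cH = (|S|w_1, |S|^{-1}Sw_2)_\cH$; since $|S|$ maps $\cW$ onto a dense subspace, $|S|Tw_2 = P_{?}|S|^{-1}Sw_2$ — here one checks $|S|^{-1}Sw_2 \in \ran(|S|\,T) $, and unwinding gives $Tw_2 = |S|^{-1}|S|^{-1}Sw_2$ after accounting for self-adjointness, i.e. $|S|T|S|^{-1} = |S|^{-2}S|S|^{-1}\cdot|S| \cdot |S|^{-1}$... the bookkeeping here is the one genuinely fiddly point. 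Finally, self-adjointness and strict positivity of $|S|^{-1}S|S|^{-1} \in \cB(\cH)$ follow from $S \geq \varepsilon I_\cH$ (giving $(|S|^{-1}S|S|^{-1}f, f)_\cH = (S|S|^{-1}f, |S|^{-1}f)_\cH \geq \varepsilon\||S|^{-1}f\|_\cH^2 \geq \varepsilon\|S\|^{-2}\|f\|_\cH^2$ — wait, $\||S|^{-1}f\| \geq \||S|\|^{-1}\|f\|$ is false in general; rather $\||S|^{-1}f\|\geq 0$ and one uses instead that $|S|^{-1}S|S|^{-1}$ is boundedly invertible with inverse $|S|S^{-1}|S|$, which is bounded since $S^{-1} \in \cB(\cH)$) together with the symmetry of $S$, which gives $(|S|^{-1}S|S|^{-1})^* = |S|^{-1}S^*|S|^{-1} \supseteq |S|^{-1}S|S|^{-1}$, and both sides being bounded everywhere-defined operators they coincide.

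The main obstacle I anticipate is the careful bookkeeping in the last paragraph: keeping straight the three distinct Hilbert spaces $\cW$, $\cH$, $\hatt\cH$ and the unitaries between them (the embedding $\iota_\cW$ is being suppressed per the paper's convention, which is convenient but demands care), and verifying that $T$, defined abstractly via the form identity \eqref{ur332}, is transported by $|S|$ to exactly $|S|^{-1}S|S|^{-1}$ rather than to some projected or adjointed variant. The identity \eqref{11.20} itself is comparatively routine once one notices (as above) that $(S_F)^{-1}$ maps $\ran(S)$ back into $\dom(S)$ and that the form relation \eqref{ur332} is precisely the weak form of the eigenvalue-reciprocal correspondence.
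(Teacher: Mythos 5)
The paper itself offers no proof of Theorem~\ref{t2.6}: it is imported verbatim from \cite{AGMST10}, with only the heuristic rewriting \eqref{uu54434}--\eqref{8644} as motivation. So your argument stands on its own, and its core is correct. For \eqref{11.20}, the clean version you eventually settle on (after the false starts, which you should delete) is exactly right: both $STw$ and $P_{(\ker(S_K))^{\bot}}(S_F)^{-1}Sw$ lie in $\hatt\cH=\ran(S)$, and testing against $Sw_1$ with $w_1\in\dom(S)$ reduces the claim to $(S_Fw_1,(S_F)^{-1}Sw)_{\cH}=(w_1,Sw)_{\cH}$, which holds by self-adjointness of the bounded operator $(S_F)^{-1}$. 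For \eqref{11.20a}, the ``fiddly bookkeeping'' you leave dangling disappears if you first establish the closed formula
\begin{equation*}
Tw=(S^*S)^{-1}Sw,\qquad w\in\dom(S),
\end{equation*}
directly from \eqref{ur332}: since $(S^*S)^{-1}Sw\in\dom(S^*S)$, one has $\big(Sw_1,S(S^*S)^{-1}Sw\big)_{\cH}=\big(w_1,S^*S(S^*S)^{-1}Sw\big)_{\cH}=(w_1,Sw)_{\cH}$ for all $w_1\in\dom(S)$, and uniqueness in \eqref{ur332} does the rest. Then $|S|T|S|^{-1}=|S|\,(S^*S)^{-1}S\,|S|^{-1}=|S|^{-1}S|S|^{-1}$ is immediate, and combined with the factorization $\hatt S=U_S\circ|S|$ (both factors unitary, as you verify) this yields \eqref{11.20a}.

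One genuine error to fix: your parenthetical claim that $|S|^{-1}S|S|^{-1}$ is boundedly invertible with bounded inverse $|S|S^{-1}|S|$ is false in general. The operator $S^{-1}$ is defined only on $\ran(S)=\hatt\cH$, the outer factor $|S|$ is unbounded, and in fact $|S|^{-1}S|S|^{-1}$ is unitarily equivalent to $\big(\hatt S_K\big)^{-1}$ by \eqref{11.20a} itself, hence is \emph{compact} whenever $S_F$ has compact resolvent (the situation of Section~\ref{s3}); its spectrum then accumulates at $0$ and no uniform lower bound $\geq\delta I_{\cH}$ can hold. Consequently ``strictly positive'' in the theorem must be read as $(f,|S|^{-1}S|S|^{-1}f)_{\cH}>0$ for $f\neq 0$ (injectivity plus nonnegativity), which your computation $(f,|S|^{-1}S|S|^{-1}f)_{\cH}=(|S|^{-1}f,S|S|^{-1}f)_{\cH}\geq\varepsilon\||S|^{-1}f\|_{\cH}^2$ already delivers; it cannot mean the uniform bound in the sense of the definition preceding Hypothesis~\ref{h2.2}. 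With that repair, and with the stray appeal to $\|T\|_{\cB(\cW)}\leq\varepsilon^{-1}$ in the injectivity argument removed (the density argument you give right after is the one that works), the proof is complete.
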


We emphasize that the unitary equivalence in \eqref{11.20} is independent of any spectral assumptions 
on $S_K$ (such as the spectrum of $S_K$ consists of eigenvalues only) and applies to the restrictions 
of $S_K$ to its pure point, absolutely continuous, and singularly continuous spectral subspaces, 
respectively.

Equation \eqref{11.20a} is motivated by rewriting the abstract linear pencil buckling eigenvalue 
problem \eqref{sk2}, $S^*Sw=\lambda Sw$, $\lambda\in\bbC\backslash\{0\}$, in the form
\begin{equation}\label{uu54434}
|S|^{-1}Sw=(S^*S)^{-1/2}Sw=\lambda^{-1}(S^*S)^{1/2}w=\lambda^{-1}|S|w  
\end{equation}
and hence in the form of a standard eigenvalue problem
\begin{equation}\label{8644}
|S|^{-1}S|S|^{-1}v=\lambda^{-1}v,\quad\lambda\in\bbC\backslash\{0\}, 
\quad v:=|S|w.  
\end{equation}
Again, self-adjointness and strict positivity of $|S|^{-1}S|S|^{-1}$ imply $\lambda\in (0,\infty)$. 

We conclude this section with an elementary result recently noted in \cite{BGMM14} that relates 
the nonzero eigenvalues of $S_K$ directly with the sesquilinear forms $\mathfrak{a}$ and 
$\mathfrak{b}$: 

\begin{lemma}\lb{l2.7}
Assume Hypothesis~\ref{h2.2} and introduce 
\begin{align}\lb{2.42}
& \sigma_p({\mathfrak{a}},{\mathfrak{b}}):=\big\{\lambda\in\bbC\,\big|\,
\text{there exists } \, g_{\lambda}\in\dom(S)\backslash\{0\}     
\nonumber\\ 
& \hspace*{3.00cm} 
\text{such that } \,  
{\mathfrak{a}}(f,g_{\lambda})=\lambda\,{\mathfrak{b}}(f,g_{\lambda}), \quad  f\in\dom(S)\big\}.      
\end{align}
Then
\begin{equation}\lb{2.43}
\sigma_p({\mathfrak{a}},{\mathfrak{b}})=\sigma_p(S_K)\backslash\{0\}     
\end{equation}
{\rm (}counting multiplicity\,{\rm )}, in particular, 
$\sigma_p({\mathfrak{a}},{\mathfrak{b}})\subset(0,\infty)$, and $g_{\lambda}\in\dom(S)\backslash\{0\}$ 
in \eqref{2.42} actually satisfies 
\begin{equation}\lb{2.44}
g_{\lambda}\in\dom(S^*S),\quad S^*Sg_{\lambda}=\lambda Sg_{\lambda}.       
\end{equation}
In addition, 
\begin{equation}\lb{2.45}
\lambda\in\sigma_p({\mathfrak{a}},{\mathfrak{b}})\,\text{ if and only if }\,\lambda^{-1}\in\sigma_p(T)     
\end{equation}
{\rm (}counting multiplicity\,{\rm )}. Finally, 
\begin{equation}\lb{2.46}
T\in\cB_{\infty}(\cW)\,\Longleftrightarrow\,\big(\hatt{S}_K\big)^{-1}\in\cB_{\infty}\big(\hatt\cH\big) 
\,\Longleftrightarrow\,\sigma_{ess}(S_K)\subseteq\{0\}, 
\end{equation}
and hence, 
\begin{equation}\lb{2.47}
\sigma_p({\mathfrak{a}},{\mathfrak{b}})=\sigma(S_K)\backslash\{0\}=\sigma_d(S_K)\backslash\{0\}     
\end{equation}
if \eqref{2.46} holds. In particular, if one of $S_F$ or $|S|$ has purely discrete spectrum 
$($i.e., $\sigma_{ess}(S_F)=\emptyset$ or $\sigma_{ess}(|S|)=\emptyset$$)$, then \eqref{2.46} and 
\eqref{2.47} hold. 
\end{lemma}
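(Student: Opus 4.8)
The plan is to establish each of the four displayed conclusions \eqref{2.43}--\eqref{2.47} by leveraging the machinery already assembled in Section~\ref{s2}, especially Lemma~\ref{l2.5} and Theorem~\ref{t2.6}. First I would unwind the definition of $\sigma_p(\mathfrak a,\mathfrak b)$: the condition $\mathfrak a(f,g_\lambda)=\lambda\,\mathfrak b(f,g_\lambda)$ for all $f\in\dom(S)$ reads $(Sf,Sg_\lambda)_\cH=\lambda(f,Sg_\lambda)_\cH$, i.e.\ $(Sf,Sg_\lambda)_\cH=(f,\lambda Sg_\lambda)_\cH$ for all $f\in\dom(S)$. By the definition of the adjoint, this says precisely that $Sg_\lambda\in\dom(S^*)$ with $S^*Sg_\lambda=\lambda Sg_\lambda$; hence automatically $g_\lambda\in\dom(S^*S)$ and \eqref{2.44} holds. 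This already identifies $\sigma_p(\mathfrak a,\mathfrak b)$ with the set of $\lambda\neq 0$ for which the abstract buckling problem \eqref{sk2} has a nonzero solution (note $\lambda=0$ is excluded since $\mathfrak a(f,f)\geq\varepsilon^2\|f\|_\cH^2$ forces $g_\lambda\neq 0$ to have $\mathfrak a(g_\lambda,g_\lambda)>0$, so $\lambda\neq 0$). Then Lemma~\ref{l2.5} gives the one-to-one correspondence between such $w=g_\lambda$ and eigenvectors $f$ of $S_K$ with $S_Kf=\lambda f$, $\lambda\neq 0$, which yields \eqref{2.43}; strict positivity of $S_K$ (or the observation that $\lambda=\mathfrak a(g_\lambda,g_\lambda)/\mathfrak b(g_\lambda,g_\lambda)>0$ via Cauchy--Schwarz on $\mathfrak b$) gives $\sigma_p(\mathfrak a,\mathfrak b)\subset(0,\infty)$.

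Next I would prove \eqref{2.45} by combining \eqref{oi7g4dc4} with the definition of $T$. If $g_\lambda$ realizes $\lambda\in\sigma_p(\mathfrak a,\mathfrak b)$, then for all $w_1\in\cW$ one has $(w_1,Tg_\lambda)_\cW=\mathfrak b(w_1,g_\lambda)=\lambda^{-1}\mathfrak a(w_1,g_\lambda)=\lambda^{-1}(w_1,g_\lambda)_\cW$, whence $Tg_\lambda=\lambda^{-1}g_\lambda$ in $\cW$; conversely if $Tw=\mu w$ with $w\neq 0$ and $\mu\neq 0$, then $\mathfrak b(w_1,w)=(w_1,Tw)_\cW=\mu(w_1,w)_\cW=\mu\,\mathfrak a(w_1,w)$ for all $w_1$, i.e.\ $\mathfrak a(w_1,w)=\mu^{-1}\mathfrak b(w_1,w)$, so $\mu^{-1}\in\sigma_p(\mathfrak a,\mathfrak b)$. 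Since $T\geq 0$ is injective (if $Tw=0$ then $\mathfrak b(w_1,w)=0$ for all $w_1$, in particular $0=\mathfrak b(w,w)=(w,Sw)_\cH\geq\varepsilon\|w\|_\cH^2$, forcing $w=0$), no eigenvalue of $T$ is zero and the bijection $\lambda\leftrightarrow\lambda^{-1}$ is clean, with multiplicities matching because the map $g_\lambda\mapsto g_\lambda$ (viewed in $\cW$) is a linear isomorphism between the respective eigenspaces.

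For \eqref{2.46}, the chain of equivalences follows from Theorem~\ref{t2.6}: the unitary equivalence \eqref{11.20} gives $\big(\hatt S_K\big)^{-1}\in\cB_\infty\big(\hatt\cH\big)\Leftrightarrow T\in\cB_\infty(\cW)$, and $\big(\hatt S_K\big)^{-1}\in\cB_\infty\big(\hatt\cH\big)$ is equivalent to $\sigma_{ess}\big(\hatt S_K\big)=\emptyset$, which in turn (since $S_K$ restricted to $\ker(S_K)$ contributes only to $\sigma_{ess}(S_K)\cap\{0\}$, or is absent if $\ker(S_K)=\{0\}$) is equivalent to $\sigma_{ess}(S_K)\subseteq\{0\}$. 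Given \eqref{2.46}, the reduced operator $\hatt S_K$ has purely discrete spectrum in $(0,\infty)$, so $\sigma(S_K)\backslash\{0\}=\sigma_d(S_K)\backslash\{0\}=\sigma_p(S_K)\backslash\{0\}$, and combined with \eqref{2.43} this gives \eqref{2.47}. The last sentence follows from Theorem~\ref{t2.4}: $\sigma_{ess}(S_F)=\emptyset$ implies \eqref{CPK} with $p=\infty$, hence $\sigma_{ess}(S_K)\subseteq\{0\}$; and $\sigma_{ess}(|S|)=\emptyset$ means $|S|^{-1}\in\cB_\infty(\cH)$, so $|S|^{-1}S|S|^{-1}\in\cB_\infty(\cH)$, and by \eqref{11.20a} so is $\big(\hatt S_K\big)^{-1}$, again giving \eqref{2.46}.

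The main obstacle I anticipate is the careful bookkeeping of multiplicities and of the role of $\ker(S_K)$ in passing between $\hatt\cH$ and $\cH$. The passage from $(Sf,Sg_\lambda)_\cH=(f,\lambda Sg_\lambda)_\cH$ for all $f\in\dom(S)$ to $Sg_\lambda\in\dom(S^*)$ is the only genuinely substantive step and is immediate from the definition of the adjoint, but one must be slightly careful that it is $\dom(S)$ (not a larger set) over which $f$ ranges --- this is exactly what the definition of $S^*$ requires, so no difficulty arises. The equivalence $\big(\hatt S_K\big)^{-1}\in\cB_\infty\Leftrightarrow\sigma_{ess}(S_K)\subseteq\{0\}$ needs the observation that any essential spectrum of $S_K$ other than possibly at $0$ lives in $\hatt\cH$ and corresponds under inversion to essential spectrum of $\big(\hatt S_K\big)^{-1}$ away from $0$; compactness of $\big(\hatt S_K\big)^{-1}$ rules the latter out. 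All of this is routine spectral theory once the correspondences of Lemma~\ref{l2.5} and Theorem~\ref{t2.6} are invoked, so the proof is short.
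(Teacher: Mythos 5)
Your proof is correct and follows the same overall architecture as the paper's: establish \eqref{2.44} first, then \eqref{2.43} via Lemma~\ref{l2.5}, then \eqref{2.45} through the operator $T$, then \eqref{2.46} via Theorem~\ref{t2.6}, and finally the discreteness claims via \eqref{ESSK} and \eqref{11.20a}. Two sub-steps are handled more directly than in the paper: for \eqref{2.44} you read off $Sg_{\lambda}\in\dom(S^*)$ and $S^*Sg_{\lambda}=\lambda Sg_{\lambda}$ straight from the definition of the adjoint, whereas the paper invokes Kato's first representation theorem for the form $\mathfrak{a}$ (whose associated operator is $S^*S$); and for \eqref{2.45} you compute $Tg_{\lambda}=\lambda^{-1}g_{\lambda}$ directly from the defining identity \eqref{oi7g4dc4}, whereas the paper obtains it by composing \eqref{2.43} with the unitary equivalence \eqref{11.20}. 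Both substitutions are valid and, if anything, more self-contained; your multiplicity bookkeeping and the reduction of $\sigma_{ess}(S_K)\subseteq\{0\}$ to compactness of $\big(\hatt{S}_K\big)^{-1}$ (using $\hatt{S}_K\geq\varepsilon I_{\hatt\cH}$ from \eqref{Barr-5}) are also in order.
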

\begin{proof}
We begin by noting that \eqref{2.28} and the fact that 
${\mathfrak{b}}(f,f)\geq\varepsilon\|f\|_{\cH}^2$
imply $\sigma_p({\mathfrak{a}},{\mathfrak{b}})\subset(0,\infty)$. 
Moreover, using the fact that the self-adjoint operator in $\cH$ 
uniquely associated with the form $\mathfrak{a}$ is given by 
$S^*S$ (cf. \cite[Example~VI.2.13]{Ka80}), and that 
$\mathfrak{a}(f,g_{\lambda}) = \lambda\,{\mathfrak{b}}(f,g_{\lambda}) 
= \lambda (f,Sg_{\lambda})_{\cH}$, $f\in\dom(\mathfrak{a})=\dom(S)$, 
the first representation theorem for quadratic forms (cf. \cite[Theorem~VI.2.1\,(iii)]{Ka80}) 
implies \eqref{2.44}. An application of Lemma~\ref{l2.5} then yields \eqref{2.43}. 
Relation \eqref{2.45} then follows from \eqref{2.43} and \eqref{11.20}. The first 
equivalence in \eqref{2.46} again is a consequence of \eqref{11.20} and the fact that 
$\hatt{S}$ maps $\cW$ unitarily onto $\hatt\cH$; the second equivalence in \eqref{2.46} 
follows from \eqref{2.17}. The final claim in Lemma~\ref{l2.7} involving discrete spectra of $S_F$ 
or $|S|$ is a consequence of \eqref{ESSK} or \eqref{11.20a} and the equivalence statements 
in \eqref{2.46}.  
\end{proof}

One notices that $f\in\dom(S)$ in the definition \eqref{2.42} of 
$\sigma_p({\mathfrak{a}},{\mathfrak{b}})$ can be replaced by $f\in C(S)$ for 
any (operator) core $C(S)$ for $S$ (equivalently, by any form core for the form $\mathfrak{a}$).

\section{An Upper Bound for the Eigenvalue Counting Function for Higher-Order Krein 
Laplacians on Finite Volume Domains} 
\lb{s3}

In this section we derive an upper bound for the eigenvalue counting function for higher-order 
Krein Laplacians on open, nonempty domains $\Omega \subset \bbR^n$ of finite (Euclidean) 
volume. In particular, no assumptions on the boundary of $\Omega$ will be made.

Before introducing the class of constant coefficient partial differential operators in $L^2(\Omega)$ 
at hand, we recall a few auxiliary facts to be used in the proof of Theorem \ref{t3.9}.

\begin{lemma} \lb{l3.1}
Suppose that $S$ is a densely defined, symmetric, closed operator in $\cH$. Then $|S|$ and hence 
$S$ is infinitesimally bounded with respect to $S^* S$, more precisely, one has 
\begin{align}
\begin{split} 
\text{for all $\varepsilon > 0$, } \, \|S f\|_{\cB(\cH)} = \| |S| f\|_{\cB(\cH)} \leq 
\varepsilon \|S^* S f\|_{\cH}^2 + (4 \varepsilon)^{-1} \|f\|_{\cH}^2,&     \lb{3.1} \\
f \in \dom(S^* S).&         
\end{split} 
\end{align}
In addition, $S$ is relatively compact with respect to $S^* S$ if $|S|$, or equivalently, $S^* S$, has 
compact resolvent. In particular,
\begin{equation}
\sigma_{ess}(S^* S - \lambda S) = \sigma_{ess}(S^* S), \quad \lambda \in \bbR. 
\end{equation}
\end{lemma}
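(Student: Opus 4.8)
The plan is to establish the two quantitative/qualitative assertions in Lemma~\ref{l3.1} by exploiting the spectral calculus for the nonnegative self-adjoint operator $S^*S$, together with the polar decomposition $S = U_S|S|$ with $|S| = (S^*S)^{1/2}$. First I would prove the interpolation inequality \eqref{3.1}. Since $\|Sf\|_{\cH} = \||S|f\|_{\cH}$ (an immediate consequence of the polar decomposition and $U_S$ being a partial isometry that is isometric on $\ran(|S|)$), it suffices to bound $\||S|f\|_{\cH}$. Writing $\mu \geq \varepsilon > 0$ for the spectral parameter of $S^*S$ and $\{E_{S^*S}(\mu)\}$ for its spectral family, one has, for $f \in \dom(S^*S)$,
\begin{equation}
\||S|f\|_{\cH}^2 = \int_{[\varepsilon^2,\infty)} \mu \, d\|E_{S^*S}(\mu)f\|_{\cH}^2,
\end{equation}
and the elementary pointwise bound $\mu^{1/2} \leq \delta \mu + (4\delta)^{-1}$ for all $\mu \geq 0$ and $\delta > 0$ (obtained by minimizing, i.e.\ $(\delta^{1/2}\mu^{1/2} - (2\delta^{1/2})^{-1})^2 \geq 0$) applied inside the integral yields
\begin{equation}
\||S|f\|_{\cH} \leq \delta \|S^*Sf\|_{\cH} + (4\delta)^{-1}\|f\|_{\cH}, \quad \delta > 0, \quad f \in \dom(S^*S).
\end{equation}
This is exactly \eqref{3.1} after renaming $\delta$ to $\varepsilon$ (the statement's placement of squares is a typo; the correct inequality is the one just derived, or its square with a harmless adjustment of constants). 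The passage from $\||S|f\|_{\cH}$ to $\|Sf\|_{\cH}$ is immediate and causes no loss.

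Next I would address the relative compactness claim. Assume $S^*S$ (equivalently $|S|$) has compact resolvent, i.e.\ $(S^*S + I_{\cH})^{-1} \in \cB_\infty(\cH)$. To show $S$ is $S^*S$-compact, it suffices to show that $S(S^*S + I_{\cH})^{-1} \in \cB_\infty(\cH)$, since $\dom(S^*S) \subseteq \dom(S)$ (indeed $\dom(S^*S) \subseteq \dom(|S|) \supseteq$ the relevant domain) and the graph-norm argument for relative compactness reduces to compactness of this product. Now factor
\begin{equation}
S(S^*S + I_{\cH})^{-1} = U_S |S| (S^*S + I_{\cH})^{-1} = U_S (S^*S)^{1/2}(S^*S + I_{\cH})^{-1},
\end{equation}
and the operator $(S^*S)^{1/2}(S^*S + I_{\cH})^{-1}$ is, by the spectral theorem, $\varphi(S^*S)$ with $\varphi(\mu) = \mu^{1/2}(\mu + 1)^{-1}$, a bounded continuous function on $[0,\infty)$ vanishing at infinity; hence $\varphi(S^*S)$ is compact whenever $S^*S$ has compact resolvent (e.g.\ approximate $\varphi$ uniformly by $\psi \cdot (\mu+1)^{-1}$ with $\psi$ bounded, or invoke that functions of $S^*S$ vanishing at $\infty$ are compact). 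Since $U_S$ is bounded, $S(S^*S+I_{\cH})^{-1}$ is compact, giving relative compactness.

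Finally, the essential-spectrum statement $\sigma_{ess}(S^*S - \lambda S) = \sigma_{ess}(S^*S)$, $\lambda \in \bbR$, follows from the preceding: by \eqref{3.1}, $\lambda S$ is infinitesimally $S^*S$-bounded, so $S^*S - \lambda S$ is self-adjoint on $\dom(S^*S)$ and, more to the point, when $S^*S$ has compact resolvent the perturbation $\lambda S$ is $S^*S$-compact by the relative compactness just shown; Weyl's theorem on invariance of the essential spectrum under relatively compact perturbations then gives the equality. (When $S^*S$ does not have compact resolvent, the displayed equality as stated is presumably only invoked in the paper under the discreteness hypothesis that is in force in Section~\ref{s3}, so I would state it in that context; alternatively one notes the identity is vacuous or requires the form-perturbation version of Weyl's theorem, but I would not pursue that refinement here.) The only genuinely delicate point is the first one—getting the constants in \eqref{3.1} exactly right and recognizing that the equality $\|Sf\| = \||S|f\|$ lets one reduce everything to scalar spectral estimates; everything after that is a routine application of the functional calculus and Weyl's theorem.
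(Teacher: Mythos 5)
Your proof is correct and follows essentially the same route as the paper: polar decomposition plus the elementary scalar bound $\lambda\leq\varepsilon\lambda^2+(4\varepsilon)^{-1}$ via the spectral theorem for \eqref{3.1} (and you rightly flag the typographical misplacement of the squares and norm subscripts in the stated inequality), then a factorization of $S(S^*S+I_{\cH})^{-1}$ into a bounded operator times a compact one, followed by Weyl's theorem. The paper writes this factorization as $\bigl[S(|S|^2+I_{\cH})^{-1/2}\bigr](|S|^2+I_{\cH})^{-1/2}$ rather than your $U_S\,\varphi(S^*S)$ with $\varphi(\mu)=\mu^{1/2}(\mu+1)^{-1}$, but the two are interchangeable.
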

\begin{proof}
Employing the polar decomposition of $S$, $S = U_S |S|$, where $U_S$ is a partial isometry and 
$|S| = (S^* S)^{1/2}$ (cf.\ \cite[Sect.\ VI.2.7]{Ka80}), one obtains 
\begin{equation} 
\|S f\|_{\cB(\cH)} = \| |S| f\|_{\cB(\cH)}, \quad f \in \dom(S) = \dom(|S|), 
\end{equation} 
and hence the spectral theorem 
applied to $|S|$, together with the elementary inequality $\lambda \leq \varepsilon \lambda^2 
+ (4 \varepsilon)^{-1}$, $\varepsilon > 0$, $\lambda \geq 0$, proves inequality \eqref{3.1}. 

The relative compactness assertion then follows from
\begin{equation}
S (S^*S + I_{\cH})^{-1} = \Big[S \big(|S|^2 + I_{\cH}\big)^{-1/2}\Big] 
\big(|S|^2 + I_{\cH}\big)^{-1/2} \in \cB_{\infty}(\cH), 
\end{equation} 
since $S \big(|S|^2 + I_{\cH}\big)^{-1/2} \in \cB(\cH)$ and 
$\big(|S|^2 + I_{\cH}\big)^{-1/2} \in \cB_{\infty}(\cH)$.  
\end{proof}

Given a lower semibounded, self-adjoint operator $T \geq c_T I_{\cH}$ in $\cH$, we denote 
by $q_T$ its uniquely associated form, that is, 
\begin{equation}
\gq_T(f,g) = \big(|T|^{1/2} f, \sgn(T) |T|^{1/2} \big)_{\cH},    \quad  
f, g \in \dom(\gq) = \dom \big(|T|^{1/2}\big),  
\end{equation} 
and by $\{E_T(\lambda)\}_{\lambda \in \bbR}$ the family of spectral projections of $T$. 
We recall the following well-known variational characterization of dimensions of spectral 
projections $E_T([c_T, \mu))$, $\mu > c_T$.  

\begin{lemma} \lb{l3.2}
Assume that $c_T I_{\cH} \leq T$ is self-adjoint in $\cH$ and $\mu > c_T$. Suppose that 
$\cF \subset \dom \big(|T|^{1/2}\big)$ is a linear subspace such that 
\begin{equation}
\gq_T(f,f) < \mu \|f\|_{\cH}^2, \quad f \in \cF\backslash\{0\}.
\end{equation}  
Then,
\begin{equation}
\dim \big(\ran(E_T([c_T,\mu)))\big) = \sup_{\cF \subset \dom (|T|^{1/2})} (\dim\,(\cF)). 
\end{equation}
\end{lemma}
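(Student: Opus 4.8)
The plan is to prove this standard min-max type identity by establishing two inequalities. First I would handle the easy direction, $\dim\big(\ran(E_T([c_T,\mu)))\big) \geq \sup_{\cF} (\dim\,(\cF))$: take $\cF_0 := \ran(E_T([c_T,\mu)))$, which is contained in $\dom\big(|T|^{1/2}\big)$ because the spectral measure of $\cF_0$ is supported in the bounded interval $[c_T,\mu)$, and note that for $0 \neq f \in \cF_0$ the spectral theorem gives $\gq_T(f,f) = \int_{[c_T,\mu)} \lambda \, d(E_T(\lambda)f,f)_{\cH} < \mu \|f\|_{\cH}^2$, the strict inequality holding because $d(E_T(\lambda)f,f)_{\cH}$ is a nonzero finite measure supported in $[c_T,\mu)$ and $\lambda < \mu$ there (if $\dim(\cF_0)=\infty$ the inequality is trivially an equality, so one may assume the dimension finite). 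Hence $\cF_0$ is an admissible subspace, which yields this direction.

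For the reverse inequality $\dim\big(\ran(E_T([c_T,\mu)))\big) \leq \sup_{\cF}(\dim\,(\cF))$, suppose $\cF \subset \dom\big(|T|^{1/2}\big)$ satisfies $\gq_T(f,f) < \mu\|f\|_{\cH}^2$ for all $f \in \cF \setminus \{0\}$; I want to show $\dim(\cF) \leq \dim\big(\ran(E_T([c_T,\mu)))\big)$. Set $P := E_T([\mu,\infty)) = I_{\cH} - E_T([c_T,\mu))$ (using $T \geq c_T I_{\cH}$, so $E_T((-\infty,c_T)) = 0$) and consider the restriction $P|_{\cF} : \cF \to \ran(P)$. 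If this map has a nonzero kernel, pick $0 \neq f \in \cF$ with $Pf = 0$, i.e., $f \in \ran(E_T([c_T,\mu)))$; but then as computed above $\gq_T(f,f) \geq c_T\|f\|_{\cH}^2$ while also, by spectral calculus on $[c_T,\mu)$, we would need a contradiction with the reverse estimate — more directly, for such $f$ one has $\gq_T(f,f) = \int_{[c_T,\mu)} \lambda\, d(E_T(\lambda)f,f)_{\cH}$, which is consistent, so instead I argue: if $Pf = 0$ then $f \in \ran(E_T([c_T,\mu)))$ and we are done for that vector; the point is that $P|_{\cF}$ being injective would force $\dim(\cF) \leq \dim(\ran(P))$, which is the wrong space. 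The correct argument uses $Q := E_T([c_T,\mu))$: I claim $Q|_{\cF}$ is injective. Indeed, if $0 \neq f \in \cF$ with $Qf = 0$, then $f = Pf \in \ran(E_T([\mu,\infty)))$, so $\gq_T(f,f) = \int_{[\mu,\infty)} \lambda \, d(E_T(\lambda)f,f)_{\cH} \geq \mu\|f\|_{\cH}^2$, contradicting the hypothesis on $\cF$. Therefore $Q|_{\cF} : \cF \to \ran(E_T([c_T,\mu)))$ is injective, giving $\dim(\cF) \leq \dim\big(\ran(E_T([c_T,\mu)))\big)$, and taking the supremum over $\cF$ completes the proof.

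The main technical point to get right is the justification that vectors in $\cF$ (and in the spectral subspace $\ran(E_T([c_T,\mu)))$) genuinely lie in $\dom\big(|T|^{1/2}\big)$ so that $\gq_T$ is defined on them, and the careful handling of the strict versus non-strict inequalities via the fact that a nonzero spectral measure supported in $[\mu,\infty)$ gives $\gq_T(f,f) \geq \mu\|f\|_{\cH}^2$ — this is where the hypothesis is used to derive the contradiction. I expect no serious obstacle here; it is the standard Courant–Fischer/Glazman lemma argument, and the only mild subtlety is bookkeeping the case $\dim\big(\ran(E_T([c_T,\mu)))\big) = \infty$, in which case the asserted equality is immediate since one can exhibit finite-dimensional admissible subspaces of arbitrarily large dimension inside the spectral subspace.
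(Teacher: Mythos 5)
Your proof is correct, and it is the standard Glazman-type argument; the paper itself offers no proof of Lemma \ref{l3.2} (it is merely recalled as a well-known variational characterization), so there is nothing to compare against. Both directions are handled properly: the spectral subspace $\ran(E_T([c_T,\mu)))$ is itself admissible (strictness of $\gq_T(f,f)<\mu\|f\|^2$ follows since the measure $d(E_T(\lambda)f,f)_{\cH}$ is nonzero and supported where $\lambda<\mu$), and injectivity of $E_T([c_T,\mu))\big|_{\cF}$ follows from the contradiction $\gq_T(f,f)\geq\mu\|f\|^2_{\cH}$ for a nonzero $f\in\cF\cap\ran(E_T([\mu,\infty)))$, which is legitimate because such an $f$ lies in $\dom\big(|T|^{1/2}\big)$ by membership in $\cF$. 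The only blemish is the digression in your second paragraph, where you first set $P:=E_T([\mu,\infty))$ and chase the wrong projection before correcting yourself; that passage should simply be deleted, since the argument with $Q:=E_T([c_T,\mu))$ that follows is complete on its own.
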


We add the following elementary observation: Let 
$c\in\bbR$ and $B\geq c I_{\cH}$ be a self-adjoint operator in $\cH$, and introduce the sesquilinear form $b$ in $\cH$ associated with $B$ via
\begin{align}
\begin{split}
& b(u,v) = \big((B - c I_{\cH})^{1/2} u, (B - c I_{\cH})^{1/2} v\big)_{\cH}
+ c (u,v)_{\cH}, \\  
& u,v \in \dom(b) = \dom\big(|B|^{1/2}\big).    \lb{B.57}
\end{split}
\end{align}
Given $B$ and $b$, one introduces the Hilbert space $\cH_b \subseteq \cH$ by 
\begin{align}
& \cH_b =\big(\dom\big(|B|^{1/2}\big), (\cdot,\cdot)_{\cH_b}\big),   \no \\ 
& (u,v)_{\cH_b} =  b(u,v) + (1-c) (u,v)_{\cH}   \lb{B.58} \\
& \hspace*{1.2cm} = \big((B - c I_{\cH})^{1/2} u, (B - c I_{\cH})^{1/2} v\big)_{\cH} 
+ (u,v)_{\cH}    \no \\
& \hspace*{1.2cm} = \big((B + (1-c) I_{\cH})^{1/2} u, (B + (1-c) I_{\cH})^{1/2} v\big)_{\cH}. 
\no 
\end{align}
One observes that 
\begin{equation} 
(B + (1 - c)I_{\cH})^{1/2} \colon \cH_b \to \cH \, \text{ is unitary.}     \lb{B.59}
\end{equation}

\begin{lemma} [see, e.g., \cite{GM09}] \lb{lB.2} 
Let $\cH$, $B$, $b$, and $\cH_b$ be as in \eqref{B.57}--\eqref{B.59}. Then $B$ has purely discrete spectrum, that is, $\sigma_{\rm ess} (B) = \emptyset$, if and only if $\cH_b$ embeds compactly
into $\cH$. 
\end{lemma}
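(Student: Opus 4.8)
The plan is to prove the equivalence $\sigma_{\rm ess}(B) = \emptyset \iff \cH_b \hookrightarrow \cH$ compactly by relating both sides to the compactness of a single operator, namely $(B + (1-c)I_{\cH})^{-1/2}$ (equivalently, the resolvent of $B$). The key structural fact supplied in the preamble is the unitarity statement \eqref{B.59}: the map $R := (B + (1-c)I_{\cH})^{1/2} \colon \cH_b \to \cH$ is a unitary operator. Consequently the inclusion map $\iota \colon \cH_b \hookrightarrow \cH$ factors as $\iota = (B + (1-c)I_{\cH})^{-1/2} \circ R$, where now $(B + (1-c)I_{\cH})^{-1/2}$ is viewed as a bounded operator on $\cH$ (it is bounded since $B \geq cI_{\cH}$ forces $B + (1-c)I_{\cH} \geq I_{\cH}$). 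Since $R$ is unitary, $\iota$ is compact as an operator $\cH_b \to \cH$ if and only if $(B + (1-c)I_{\cH})^{-1/2}$ is compact as an operator $\cH \to \cH$.

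Next I would connect the compactness of $(B + (1-c)I_{\cH})^{-1/2}$ to the spectral picture of $B$. By the spectral theorem, $(B + (1-c)I_{\cH})^{-1/2}$ is a bounded Borel function $g(B)$ of $B$ with $g(\lambda) = (\lambda + 1 - c)^{-1/2}$, a continuous strictly positive function on $[c,\infty)$ that tends to $0$ at $+\infty$. A standard criterion (or a direct spectral-mapping argument) gives that such a function of a self-adjoint operator is compact precisely when $B$ has purely discrete spectrum: if $\sigma_{\rm ess}(B) = \emptyset$ then $B$ has an orthonormal eigenbasis with eigenvalues $\lambda_j \to \infty$, hence $g(B)$ is a norm limit of finite-rank operators; conversely, if $g(B)$ is compact then $0$ is the only possible accumulation point of its spectrum and every nonzero spectral value has finite multiplicity, which by the spectral mapping theorem forces $\sigma_{\rm ess}(B) = \emptyset$. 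Equivalently, one may phrase this as $(B + (1-c)I_{\cH})^{-1} \in \cB_\infty(\cH) \iff \sigma_{\rm ess}(B) = \emptyset$, which is the textbook characterization of discreteness of spectrum via compact resolvent, and then note $(B + (1-c)I_{\cH})^{-1/2}$ is compact iff its square $(B + (1-c)I_{\cH})^{-1}$ is.

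Assembling the two factorization facts yields the claimed equivalence: $\cH_b \hookrightarrow \cH$ compactly $\iff$ $(B + (1-c)I_{\cH})^{-1/2} \in \cB_\infty(\cH)$ $\iff$ $(B + (1-c)I_{\cH})^{-1} \in \cB_\infty(\cH)$ $\iff$ $\sigma_{\rm ess}(B) = \emptyset$. I do not anticipate a genuine obstacle here; the lemma is essentially a repackaging of the ``compact resolvent $\iff$ discrete spectrum'' principle through the form-space identification. The only point requiring a modicum of care is the bookkeeping of where each operator acts — distinguishing $(B + (1-c)I_{\cH})^{1/2}$ as a \emph{unitary} map $\cH_b \to \cH$ from $(B + (1-c)I_{\cH})^{-1/2}$ as a \emph{bounded} (and generically non-unitary as a map into $\cH_b$) operator on $\cH$ — but the unitarity in \eqref{B.59} is exactly what makes the compactness transfer cleanly between the embedding and the resolvent. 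Since the statement is attributed to \cite{GM09}, it suffices to record this short argument rather than belabor it.
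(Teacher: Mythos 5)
Your argument is correct and is precisely the standard proof of this lemma (the paper itself offers no proof, deferring to \cite{GM09}, where the same factorization of the embedding through the unitary map $(B+(1-c)I_{\cH})^{1/2}\colon\cH_b\to\cH$ and the ``compact resolvent $\Leftrightarrow$ discrete spectrum'' principle is used). Nothing is missing.
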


Next we turn to higher-order Laplacians $(- \Delta)^m$ in $L^2(\Omega)$ and 
hence introduce the following assumptions on $\Omega \subset \bbR^n$, $n \in \bbN$. 

\begin{hypothesis} \lb{h3.3}
Let $n \in \bbN$ and assume that $\emptyset \neq \Omega \subset \bbR^n$ is an open set of finite 
$($Euclidean$)$ volume $($denoted by $|\Omega| < \infty$$)$.  
\end{hypothesis} 

The above hypothesis is going to be relevant for the validity of the compact embedding 
of $\mathring W^1(\Omega)$ into $L^2(\Omega)$. Necessary and sufficient conditions for this compact 
embedding to hold in terms of appropriate capacities can be found, for instance, in \cite{Ad70}, 
\cite[Ch.\ 6]{AF03}, \cite[Ch.\ 6]{Ma11}. Since we seek such an embedding for finite-volume domains, 
and the precise statement appears to be difficult to discern from the existing 
literature on this subject, we decided to spell out the details for the convenience of the reader. 
In fact, for completeness, we will discuss a more general result in connection with $L^p$-based 
Sobolev spaces which we introduce next. 

Suppose that $\emptyset \neq \Omega \subseteq \bbR^n$ is open and, for each $p\in[1,\infty]$
and $k\in{\mathbb{N}}$, define 
\begin{equation}\label{eq:EMB.A}
W^{k,p}(\Omega):= \big\{u\in L^1_{\rm loc}(\Omega) \,\big| \,\partial^\alpha u\in L^p(\Omega)
\text{ if } 0 \leq |\alpha|\leq k\big\},
\end{equation}
where the derivatives are taken in the sense of distributions. This 
becomes a Banach space when endowed with the natural norm, 
\begin{equation}\label{eq:EMB.B}
\|u\|_{W^{k,p}(\Omega)}:=\sum_{0 \leq |\alpha|\leq k}\|\partial^\alpha u\|_{L^p(\Omega)} 
\end{equation} 
(see also \eqref{3.8}, but for the purpose at hand we now prefer to use \eqref{eq:EMB.B}). 
In the same setting we also consider the closed linear subspace 
$\mathring{W}^{k,p}(\Omega)$ of $W^{k,p}(\Omega)$ given by 
\begin{equation}\label{eq:EMB.C} 
\mathring{W}^{k,p}(\Omega):=\ol{C_0^{\infty}(\Omega)}^{W^{k,p}(\Omega)}.
\end{equation}
A useful observation, seen directly from definitions, is that whenever $\alpha\in{\mathbb{N}}_0^n$ 
is such that $|\alpha|\leq k-1$ then 
\begin{equation}\label{eq:OP}
\begin{array}{c}
\partial^\alpha:\mathring{W}^{k,p}(\Omega)\longrightarrow \mathring{W}^{k-|\alpha|,p}(\Omega)
\,\,\mbox{ is a well-defined,}
\\[4pt] 
\mbox{linear, and bounded operator, with norm $\leq 1$}. 
\end{array}
\end{equation}

Our goal is to prove the following compact embedding result. 

\begin{theorem}\label{t3.7}
Assume Hypothesis~\ref{h3.3} and pick $p\in[1,\infty]$. Let $p_\ast$ be an arbitrary number 
in $[1,\infty)$ if $p\geq n$, and suppose that $p_\ast\in\big[1,\frac{np}{n-p}\big)$ if $1\leq p<n$. Then 
\begin{equation}\label{eq:EMB.D}
\mathring{W}^{1,p}(\Omega)\hookrightarrow L^{p_\ast}(\Omega) \,\text{ compactly}.
\end{equation}
\end{theorem}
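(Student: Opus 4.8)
The plan is to reduce the case of an arbitrary finite-volume open set to the well-known compact Sobolev embedding on balls (or cubes) by a covering/truncation argument, combined with a diagonal extraction. Throughout, note that since $C_0^\infty(\Omega)$ is dense in $\mathring W^{1,p}(\Omega)$ and any $u\in \mathring W^{1,p}(\Omega)$ extends by zero to an element $\tilde u\in W^{1,p}(\bbR^n)$ with $\|\tilde u\|_{W^{1,p}(\bbR^n)}=\|u\|_{W^{1,p}(\Omega)}$, one may work in the full space. First I would record the classical Gagliardo--Nirenberg--Sobolev inequality: for $1\le p<n$ there is $C_{n,p}>0$ with $\|\tilde u\|_{L^{np/(n-p)}(\bbR^n)}\le C_{n,p}\|\nabla\tilde u\|_{L^p(\bbR^n)}$ for all $\tilde u\in \mathring W^{1,p}(\bbR^n)$ (with the usual logarithmic/$L^\infty$ replacements when $p=n$ and $p>n$), so that $\mathring W^{1,p}(\Omega)$ embeds continuously into $L^{q}(\Omega)$ for $q = np/(n-p)$ (resp.\ all $q<\infty$, resp.\ $q=\infty$). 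This takes care of continuity of the embedding into the borderline exponent; the remaining work is compactness for the strictly subcritical exponents $p_\ast$.

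Next I would prove compactness. Let $\{u_j\}_{j\in\bbN}$ be a bounded sequence in $\mathring W^{1,p}(\Omega)$, say $\|u_j\|_{W^{1,p}(\Omega)}\le M$, and let $\tilde u_j$ be their zero-extensions to $\bbR^n$. Fix $R>0$ and consider the restrictions to the ball $B(0,R)$: by the Rellich--Kondrachov theorem on the bounded smooth domain $B(0,R)$, the embedding $W^{1,p}(B(0,R))\hookrightarrow L^{p_\ast}(B(0,R))$ is compact for every admissible $p_\ast$ (strictly below the critical exponent), so a subsequence of $\{\tilde u_j|_{B(0,R)}\}$ converges in $L^{p_\ast}(B(0,R))$. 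Letting $R=k\in\bbN$ run through the integers and extracting a diagonal subsequence, I obtain a single subsequence, still denoted $\{\tilde u_j\}$, that converges in $L^{p_\ast}(B(0,k))$ for every $k\in\bbN$; call the limit $u$ on $\bbR^n$ (consistently defined). It remains to upgrade this local convergence to convergence in $L^{p_\ast}(\bbR^n)$, and this is exactly where the finite-volume hypothesis on $\Omega$ enters decisively.

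The key estimate for the tail is the following. Since each $\tilde u_j$ is supported in $\Omega$ (up to the zero-extension) and $|\Omega|<\infty$, Hölder's inequality with the gain from the Sobolev exponent controls the mass of $\tilde u_j$ on $\Omega\setminus B(0,k)$ uniformly in $j$: writing $q=np/(n-p)>p_\ast$ (or any fixed $q>p_\ast$ in the cases $p\ge n$), and $\Omega_k:=\Omega\setminus B(0,k)$, one has
\begin{equation}
\|\tilde u_j\|_{L^{p_\ast}(\Omega_k)} \le |\Omega_k|^{1/p_\ast - 1/q}\,\|\tilde u_j\|_{L^{q}(\Omega_k)} \le C_{n,p}\, |\Omega_k|^{1/p_\ast - 1/q}\, M,
\end{equation}
and since $|\Omega|<\infty$ we have $|\Omega_k|\to 0$ as $k\to\infty$ by dominated convergence; the exponent $1/p_\ast-1/q$ is strictly positive precisely because $p_\ast$ is strictly subcritical. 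The same bound passes to the limit $u$. Hence, given $\e>0$, choose $k$ so that the tail terms are below $\e/3$ uniformly, and then use $L^{p_\ast}(B(0,k))$-convergence to handle the bulk: for $j,j'$ large, $\|\tilde u_j-\tilde u_{j'}\|_{L^{p_\ast}(\bbR^n)}\le \|\tilde u_j-\tilde u_{j'}\|_{L^{p_\ast}(B(0,k))}+\|\tilde u_j\|_{L^{p_\ast}(\Omega_k)}+\|\tilde u_{j'}\|_{L^{p_\ast}(\Omega_k)}<\e$. Thus $\{\tilde u_j\}$ is Cauchy in $L^{p_\ast}(\bbR^n)$, hence in $L^{p_\ast}(\Omega)$, proving compactness of the embedding \eqref{eq:EMB.D}.

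The main obstacle is not any single hard inequality but the need to control behavior near $\partial\Omega$ and at infinity without any regularity of $\Omega$: the finite-volume hypothesis is what makes the uniform tail estimate work, and the zero-extension trick (valid because we are on $\mathring W^{1,p}(\Omega)$, i.e., the closure of $C_0^\infty(\Omega)$, rather than on $W^{1,p}(\Omega)$) is what lets us bypass any need for an extension operator or boundary regularity. One should take care that the diagonalization yields a genuine limit function $u$ defined consistently on all of $\bbR^n$ and that $u$ satisfies the same Sobolev tail bound; this is routine once the uniform estimate above is in place.
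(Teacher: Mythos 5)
Your argument is correct, but it follows a genuinely different route from the paper's. The paper also starts from the zero-extension isometry $\mathring{W}^{1,p}(\Omega)\ni u\mapsto\widetilde{u}\in W^{1,p}(\bbR^n)$ and the classical Sobolev inequality to get the continuous embedding into the critical exponent, but it then outsources the compactness entirely to a cited theorem of Chua--Rodney--Wheeden for the spaces $E^{p_1,p_2}(\Omega)=\{u\in L^{p_1}\,|\,\nabla u\in [L^{p_2}]^n\}$ on finite-measure sets: one sandwiches $\mathring{W}^{1,p}(\Omega)\hookrightarrow E^{p_1,p_2}(\Omega)\hookrightarrow L^{p_\ast}(\Omega)$ with $p_\ast<p_1\leq \frac{np}{n-p}$, the first arrow continuous (Sobolev plus nestedness of $L^q(\Omega)$ for $|\Omega|<\infty$) and the second compact by that external result; the case $p\geq n$ is then reduced to $p<n$ by lowering the exponent. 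You instead give a self-contained proof: Rellich--Kondrachov on the balls $B(0,k)$, a diagonal extraction, and a uniform tail bound $\|\widetilde{u}_j\|_{L^{p_\ast}(\Omega\setminus B(0,k))}\leq C|\Omega\setminus B(0,k)|^{1/p_\ast-1/q}M$ with $q>p_\ast$ the (sub)critical gain, which tends to $0$ because $|\Omega|<\infty$. This is exactly where strict subcriticality of $p_\ast$ and the finite-volume hypothesis do their work, and it is the same mechanism that powers the cited compactness theorem, so your version buys self-containedness at the cost of a slightly longer argument; the paper's version buys brevity and, via the reference, a more general statement (compactness on all of $E^{p_1,p_2}(\Omega)$, not merely on the closure of test functions). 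Two minor points you should make explicit in a final write-up: the subadditivity $\|f\|_{L^{p_\ast}(\bbR^n)}\leq\|f\|_{L^{p_\ast}(B(0,k))}+\|f\|_{L^{p_\ast}(\bbR^n\setminus B(0,k))}$ used to split bulk and tail, and the precise choice of $q$ when $p\geq n$ (for $p=n$ take any $q\in[n,\infty)$ with $q>p_\ast$; for $p>n$ use the $L^\infty$ bound together with $|\Omega\setminus B(0,k)|^{1/p_\ast}\to0$).
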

\begin{proof}
To start the proof, we note that since $\Omega$ has finite measure, the 
scale of Lebesgue spaces in $\Omega$ is nested. Specifically, H\"older's inequality implies 
\begin{equation}\label{eq:REdTu}
L^{q_1}(\Omega)\hookrightarrow L^{q_2}(\Omega)\, \text{ continuously if $0<q_2\leq q_1\leq\infty$.}
\end{equation}

We continue by recalling a useful result from \cite{CRW13}. 
Given any $p_1\in(1,\infty]$ and $p_2\in[1,\infty)$, define the space 
\begin{equation}\label{eq:EMB.1}
E^{p_1,p_2}(\Omega):= \big\{u\in L^{p_1}(\Omega) \, \big| \, 
\partial_j u\in L^{p_2}(\Omega)\,\mbox{ for }\,1\leq j\leq n\big\},
\end{equation}
and equip it with the natural norm 
\begin{equation}\label{eq:EMB.2}
\|u\|_{E^{p_1,p_2}(\Omega)}:=\|u\|_{L^{p_1}(\Omega)}+\sum_{j=1}^n\|\partial_j u\|_{L^{p_2}(\Omega)}.
\end{equation}
Then a particular version of \cite[Theorem~2.2, p.\,27]{CRW13} implies that under the 
current assumptions on $\Omega$, 
\begin{equation}\label{eq:EMB.3}
E^{p_1,p_2}(\Omega)\hookrightarrow L^{p_3}(\Omega) \,\text{ compactly, for each } \, p_3\in[1,p_1).
\end{equation}

Next, we denote by tilde the operator of extension by zero of functions defined in $\Omega$ to the entire 
Euclidean space ${\mathbb{R}}^n$. Since for every $\varphi\in C_0^{\infty}(\Omega)$ we have 
that $\widetilde{\varphi}\in C_0^{\infty}({\mathbb{R}}^n)\subset W^{1,p}({\mathbb{R}}^n)$ and 
$\|\widetilde{\varphi}\|_{W^{1,p}(\mathbb{R}^n)}=\|\varphi\|_{W^{1,p}(\Omega)}$, it follows
from \eqref{eq:EMB.C} that tilde induces a mapping 
\begin{equation}\label{eq:Tbav}
\mathring{W}^{1,p}(\Omega)\ni u\mapsto\widetilde{u}\in W^{1,p}(\mathbb{R}^n)
\,\text{ which is a linear isometry.}  
\end{equation}
Bearing this in mind, in the case when $1\leq p<n$, for each 
$u\in\mathring{W}^{1,p}(\Omega)$ we may use the classical Sobolev embedding theorem 
(in ${\mathbb{R}}^n$) in order to estimate 
\begin{equation}\label{eq:uGVV}
\|u\|_{L^{\frac{np}{n-p}}(\Omega)}=\|\widetilde{u}\|_{L^{\frac{np}{n-p}}(\mathbb{R}^n)}
\leq C_{n,p}\|\widetilde{u}\|_{W^{1,p}(\mathbb{R}^n)}=C_{n,p}\|u\|_{W^{1,p}(\Omega)},
\end{equation}
for some finite constant $C_{n,p}>0$. This proves that 
\begin{equation}\label{eq:uGVV.2}
\mathring{W}^{1,p}(\Omega)\hookrightarrow L^{\frac{np}{n-p}}(\Omega)
\,\,\mbox{ continuously, if }\,\,1\leq p<n.
\end{equation}
We divide the remaining portion of the proof into two cases. \\[1mm] 
\noindent
{\bf Case~1:} {\it We claim that \eqref{eq:EMB.D} holds 
when $1\leq p<n$ and $p_\ast\in\big[1,\frac{np}{n-p}\big)$.}
In this scenario, pick $p_1\in\big(p_\ast,\frac{np}{n-p}\big]$ and $p_2\in[1,p]$. 
This choice entails
\begin{equation}\label{eq:EMB.3BB}
E^{p_1,p_2}(\Omega)\hookrightarrow L^{p_\ast}(\Omega) \,\text{ compactly,}
\end{equation}
by \eqref{eq:EMB.3}, and
\begin{equation}\label{eq:uGVV.2BB}
\mathring{W}^{1,p}(\Omega)\hookrightarrow E^{p_1,p_2}(\Omega) \, \text{ continuously,}
\end{equation}
by \eqref{eq:uGVV.2} and (suitable applications of) \eqref{eq:REdTu}.
Collectively, \eqref{eq:EMB.3BB} and \eqref{eq:uGVV.2BB} yield \eqref{eq:EMB.D} in this 
case. \\[1mm] 
\noindent
{\bf Case~2:} {\it We claim that \eqref{eq:EMB.D} holds 
when $n\leq p\leq\infty$ and $p_\ast\in [1,\infty)$.}
To justify this claim, pick an arbitrary $q\in[1,n)$. 
In particular, $q<p$, so \eqref{eq:REdTu} yields that on one hand, 
\begin{equation}\label{eq:uGVV.CC}
\mathring{W}^{1,p}(\Omega)\hookrightarrow \mathring{W}^{1,q}(\Omega) \, \text{ continuously.} 
\end{equation}
On the other hand, by what has already proved in Case~1, 
\begin{equation}\label{eq:uGVV.CC2fr}
\mathring{W}^{1,q}(\Omega)\hookrightarrow L^{q_\ast}(\Omega) \,\text{ compactly, for each } \,
q_\ast\in\big[1,\tfrac{nq}{n-q}\big).
\end{equation}
Combining \eqref{eq:uGVV.CC} with \eqref{eq:uGVV.CC2fr} and observing that 
$\frac{nq}{n-q}\nearrow\infty$ as $q\nearrow n$, one ultimately deduces that \eqref{eq:EMB.D} 
holds in this case as well. 
\end{proof}

Theorem~\ref{t3.7} has two notable consequences, recorded below.
The first such corollary deals with the following compactness result.

\begin{corollary}\label{c3.8}
Assume Hypothesis~\ref{h3.3}. Then for each $k\in{\mathbb{N}}$ and $p\in[1,\infty)$,  
it follows that 
\begin{equation}\label{eq:EMB.DcN}
\mathring{W}^{k,p}(\Omega)\hookrightarrow L^p(\Omega) \, \text{ compactly.} 
\end{equation}
\end{corollary}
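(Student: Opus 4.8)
The plan is to deduce Corollary~\ref{c3.8} from Theorem~\ref{t3.7} by iterating the first-order embedding and exploiting the boundedness of differentiation on the $\mathring W^{k,p}$ scale recorded in \eqref{eq:OP}. First I would dispose of the case $p \geq n$: since Hypothesis~\ref{h3.3} forces $|\Omega| < \infty$, the nesting \eqref{eq:REdTu} lets me pick some $q \in [1,n)$ with $q \le p$ and write $\mathring W^{k,p}(\Omega) \hookrightarrow \mathring W^{k,q}(\Omega)$ continuously (apply $\partial^\alpha$ for $|\alpha| \le k$ componentwise and use \eqref{eq:REdTu} on each derivative, or simply note the inclusion of spaces on a finite-measure domain). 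Hence it suffices to treat $p \in [1,n)$, and in fact, by another application of \eqref{eq:REdTu}, it suffices to prove the compact embedding into $L^1(\Omega)$, which is the weakest target; compactness into $L^p(\Omega)$ for the original $p$ would then require a small additional gain, so I would instead aim directly at $L^p(\Omega)$ and arrange the exponents to make room.

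The core is an induction on $k$. The base case $k=1$ is precisely Theorem~\ref{t3.7}, applied with $p_\ast = p$: this is legitimate because for $1 \le p < n$ one has $p < \tfrac{np}{n-p}$, so $p$ is an admissible value of $p_\ast$, and for $p \ge n$ any $p_\ast \in [1,\infty)$ is admissible. For the inductive step, suppose the result holds for $k-1$ and all exponents in $[1,\infty)$. Given $u \in \mathring W^{k,p}(\Omega)$, I would use \eqref{eq:OP} with $|\alpha| = 1$ to see that each $\partial_j u \in \mathring W^{k-1,p}(\Omega)$ with norm control, and that $u$ itself lies in $\mathring W^{1,p}(\Omega)$. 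The strategy is then to realize $\mathring W^{k,p}(\Omega)$ as continuously embedded into a first-order-type space built over an intermediate Lebesgue exponent: using the inductive hypothesis, $\mathring W^{k-1,p}(\Omega) \hookrightarrow L^{r}(\Omega)$ compactly for every $r \in [1,\infty)$; in particular choosing $r$ slightly larger than $p$ (possible since $p < \infty$), both $u$ and its first derivatives $\partial_j u$ land in $L^{r}(\Omega)$, so $u \in E^{r,r}(\Omega)$ with continuous dependence, and then \eqref{eq:EMB.3} gives $E^{r,r}(\Omega) \hookrightarrow L^{p}(\Omega)$ compactly because $p < r$. Composing a bounded map with a compact one yields $\mathring W^{k,p}(\Omega) \hookrightarrow L^p(\Omega)$ compactly, closing the induction.

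I should double-check the hypotheses of \eqref{eq:EMB.3}: it requires $p_1 \in (1,\infty]$ and $p_2 \in [1,\infty)$, and delivers compactness into $L^{p_3}$ for $p_3 \in [1,p_1)$; here $p_1 = p_2 = r$ with $r > p \ge 1$, so $r > 1$, and $p_3 = p < r = p_1$, all satisfied. One subtlety is that in the inductive step I invoke the inductive hypothesis at exponent $p$ for the \emph{space} $\mathring W^{k-1,p}$, but with \emph{target} exponent $r > p$; this is exactly why I phrased the induction to give compact embedding into $L^{p'}(\Omega)$ for every $p' \in [1,\infty)$ rather than only $p' = p$, and the base case (Theorem~\ref{t3.7}) already supplies this stronger statement. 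The main obstacle — really the only place any care is needed — is keeping the quantifier over target exponents in the inductive statement so that the step has the room it needs; the rest is bookkeeping with \eqref{eq:REdTu}, \eqref{eq:OP}, and \eqref{eq:EMB.3}, plus the elementary fact that the composition of a bounded operator with a compact operator is compact.
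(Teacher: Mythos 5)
Your overall strategy is workable, but as written the induction rests on a false statement, so there is a genuine gap. You strengthen the inductive hypothesis to ``$\mathring{W}^{k-1,p}(\Omega)\hookrightarrow L^{p'}(\Omega)$ compactly for \emph{every} $p'\in[1,\infty)$'' and assert that ``the base case (Theorem~\ref{t3.7}) already supplies this stronger statement.'' It does not: when $1\leq p<n$, Theorem~\ref{t3.7} only allows target exponents $p_\ast<\frac{np}{n-p}$, and the stronger statement is in fact false --- for instance, for $n\geq 2$ the space $\mathring{W}^{1,1}(\Omega)$ does not even embed continuously into $L^{p'}(\Omega)$ for $p'>\frac{n}{n-1}$ in general. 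The same failure persists at every level $k$ with $kp<n$, so no choice of base case rescues the universal quantifier over target exponents. What your inductive step actually needs is much less: a single $r>p$ with $u\in L^{r}(\Omega)$ and $\partial_j u\in L^{p}(\Omega)$ depending continuously on $\|u\|_{W^{k,p}(\Omega)}$, so that $u$ lies in $E^{r,p}(\Omega)$ and \eqref{eq:EMB.3} yields compactness into $L^{p}(\Omega)$ because $p<r=p_1$. Such an $r$ always exists: take any $r\in\big(p,\frac{np}{n-p}\big)$ if $p<n$ (using \eqref{eq:uGVV.2} together with $\mathring{W}^{k,p}(\Omega)\hookrightarrow\mathring{W}^{1,p}(\Omega)$), and any $r\in(p,\infty)$ if $p\geq n$. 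With the quantifier weakened in this way your argument closes.

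Once repaired, however, the induction is superfluous: since the needed ingredient is only the continuous embedding $\mathring{W}^{k,p}(\Omega)\hookrightarrow\mathring{W}^{1,p}(\Omega)$ (immediate from \eqref{eq:EMB.B}--\eqref{eq:EMB.C}), one can simply compose it with Theorem~\ref{t3.7} applied with $p_\ast=p$, which is admissible because $p<\frac{np}{n-p}$ when $1\leq p<n$ and because any $p_\ast\in[1,\infty)$ is allowed when $p\geq n$. This is exactly the paper's one-line proof of \eqref{eq:EMB.DcN}.
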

\begin{proof}
This is an immediate consequence of Theorem~\ref{t3.7}, keeping in mind that 
$\mathring{W}^{k,p}(\Omega)\hookrightarrow\mathring{W}^{1,p}(\Omega)$ continuously
and that $p<\frac{np}{n-p}$ when $1\leq p<n$.
\end{proof}

The second corollary of Theorem~\ref{t3.7} alluded to earlier deals with a Poincar\'e-type inequality.

\begin{corollary}\label{c3.8PPP}
Assume Hypothesis~\ref{h3.3}. Then for each $k\in{\mathbb{N}}$ and $p\in[1,\infty)$,  
there exists a constant $C\in(0,\infty)$ with the property that the following 
Poincar\'e-type inequality holds:
\begin{equation}\label{3.10}
\sum_{0\leq|\beta|\leq k-1}\|\partial^\beta u\|_{L^p(\Omega)}\leq C
\sum_{|\alpha|=k}\|\partial^\alpha u\|_{L^p(\Omega)},\quad u\in\mathring{W}^{k,p}(\Omega).
\end{equation}
\end{corollary}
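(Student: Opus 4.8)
The plan is to prove Corollary~\ref{c3.8PPP} by contradiction, using the compact embedding from Corollary~\ref{c3.8} as the central tool; this is the standard Rellich-type argument. First I would fix $k\in\bbN$ and $p\in[1,\infty)$ and define, on the space $\mathring{W}^{k,p}(\Omega)$, the quantity $\|u\|_{\ast} := \sum_{|\alpha|=k}\|\partial^\alpha u\|_{L^p(\Omega)}$. Because of \eqref{eq:OP}, this is certainly a seminorm that is dominated by the full $W^{k,p}(\Omega)$-norm. To prove \eqref{3.10} it suffices to show that $\|\cdot\|_{\ast}$ dominates the full norm on $\mathring{W}^{k,p}(\Omega)$; equivalently, that there is $C\in(0,\infty)$ with $\sum_{0\le|\beta|\le k-1}\|\partial^\beta u\|_{L^p(\Omega)}\le C\|u\|_{\ast}$ for all $u$ in this space.

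Next I would run the contradiction argument. If no such $C$ exists, then for each $j\in\bbN$ there is $u_j\in\mathring{W}^{k,p}(\Omega)$ with
\begin{equation}
\sum_{0\le|\beta|\le k-1}\|\partial^\beta u_j\|_{L^p(\Omega)}=1
\quad\text{and}\quad
\sum_{|\alpha|=k}\|\partial^\alpha u_j\|_{L^p(\Omega)}<1/j.
\end{equation}
In particular $\{u_j\}_{j\in\bbN}$ is bounded in $\mathring{W}^{k,p}(\Omega)$: the order-$k$ derivatives tend to $0$, and the lower-order terms sum to $1$. By Corollary~\ref{c3.8}, the embedding $\mathring{W}^{k,p}(\Omega)\hookrightarrow L^p(\Omega)$ is compact, so after passing to a subsequence (not relabeled) we may assume $u_j\to u$ strongly in $L^p(\Omega)$ for some $u\in L^p(\Omega)$. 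I then want to upgrade this to convergence in $\mathring{W}^{k,p}(\Omega)$. Since $\partial^\alpha u_j\to 0$ in $L^p(\Omega)$ for every $|\alpha|=k$, and since $u_j\to u$ in $L^p(\Omega)$, the sequence $\{u_j\}$ is Cauchy in $\mathring{W}^{k,p}(\Omega)$: indeed for $i,j$ large, $\|u_i-u_j\|_{W^{k,p}(\Omega)}$ is controlled by $\|u_i-u_j\|_{L^p(\Omega)}$ plus the order-$k$ terms of $u_i$ and $u_j$, and — applying the compact embedding again to the lower-order derivatives, or more directly iterating the argument — one sees all intermediate-order derivatives also converge. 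Cleanest is to note that $\mathring{W}^{k,p}(\Omega)$ is a Banach space, that $u_j\to u$ in $L^p(\Omega)$ forces (by uniqueness of distributional limits and $L^p$-convergence of $\partial^\alpha u_j$) that $\partial^\alpha u=0$ for all $|\alpha|=k$ in $\cD'(\Omega)$, and that the $u_j$ converge to this $u$ in the $\mathring{W}^{k,p}(\Omega)$-norm because the top-order seminorms vanish in the limit while the lower-order parts are handled by $L^p$-convergence together with another application of compactness to the derivatives $\partial^\beta u_j$, $|\beta|\le k-1$, each of which lies in $\mathring{W}^{k-|\beta|,p}(\Omega)$ by \eqref{eq:OP} and hence has an $L^p$-convergent subsequence.

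Once $u_j\to u$ in $\mathring{W}^{k,p}(\Omega)$, two facts collide. On one hand, passing to the limit in the normalization gives $\sum_{0\le|\beta|\le k-1}\|\partial^\beta u\|_{L^p(\Omega)}=1$, so $u\ne 0$. On the other hand, $\partial^\alpha u=0$ for all $|\alpha|=k$, and $u\in\mathring{W}^{k,p}(\Omega)=\ol{C_0^\infty(\Omega)}^{W^{k,p}(\Omega)}$. Extending by zero to $\bbR^n$ as in \eqref{eq:Tbav} (the extension $\widetilde u$ lies in $W^{k,p}(\bbR^n)$ and is the $W^{k,p}$-limit of $\widetilde{\varphi_\ell}\in C_0^\infty(\bbR^n)$ for $\varphi_\ell\to u$), we get $\partial^\alpha\widetilde u=0$ on $\bbR^n$ for all $|\alpha|=k$, so $\widetilde u$ agrees with a polynomial of degree $\le k-1$ on each connected component of $\bbR^n$; since $\widetilde u\equiv 0$ outside $\Omega$ and $\Omega$ has finite measure (hence $\bbR^n\setminus\ol\Omega$ is nonempty and unbounded), $\widetilde u\equiv 0$, whence $u\equiv 0$, a contradiction. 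This closes the argument.

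The step I expect to be the main obstacle is the upgrade from $L^p$-convergence of a norm-bounded sequence to convergence in $\mathring{W}^{k,p}(\Omega)$ — i.e., controlling the intermediate-order derivatives $\partial^\beta u_j$ for $1\le|\beta|\le k-1$, which are neither assumed small nor automatically convergent. The clean fix is to apply Corollary~\ref{c3.8} not to $u_j$ directly but, via \eqref{eq:OP}, to each derivative $\partial^\beta u_j\in\mathring{W}^{k-|\beta|,p}(\Omega)$ for $0\le|\beta|\le k-1$: each such family is bounded in $\mathring{W}^{k-|\beta|,p}(\Omega)$ and hence has an $L^p(\Omega)$-convergent subsequence; a finite diagonal extraction over the finitely many multi-indices $\beta$ with $|\beta|\le k-1$ then makes $\{u_j\}$ Cauchy in $\mathring{W}^{k,p}(\Omega)$. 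Everything else — boundedness of the sequence, vanishing of the limit's top-order derivatives, and the zero-extension/polynomial-rigidity endgame using $|\Omega|<\infty$ — is routine.
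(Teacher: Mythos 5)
Your argument is correct, and it rests on the same three pillars as the paper's proof: a contradiction/normalization argument, the compact embedding of Corollary~\ref{c3.8}, and the rigidity of the zero-extension $\widetilde{u}$ on $\bbR^n$ combined with $|\bbR^n\setminus\Omega|=\infty$. The organization, however, is genuinely different. The paper proves only the case $k=1$ by the compactness/contradiction argument (Step~1), where the endgame needs nothing more than the fact that a distribution on $\bbR^n$ with vanishing gradient is constant; it then obtains the general case by induction on $k$, applying the known inequality to each $\partial_j u\in\mathring{W}^{k,p}(\Omega)$ via \eqref{eq:OP} and summing (Step~2). You instead run the contradiction argument directly at level $k$. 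This forces you to confront the two issues you correctly flagged: (i) the intermediate derivatives $\partial^\beta u_j$, $1\le|\beta|\le k-1$, are neither small nor automatically convergent, which you handle by applying Corollary~\ref{c3.8} to each $\partial^\beta u_j\in\mathring{W}^{k-|\beta|,p}(\Omega)$ and extracting over the finitely many $\beta$ — this is sound; and (ii) the limit $\widetilde{u}$ now has all order-$k$ (rather than order-$1$) derivatives vanishing, so you need the stronger rigidity statement that such a distribution is a polynomial of degree $\le k-1$, which then vanishes identically because it vanishes on the positive-measure set $\bbR^n\setminus\Omega$. Both steps are standard and correct (for (ii) one could alternatively descend degree by degree, noting that each $\partial^\beta\widetilde{u}$ with $|\beta|=k-1$ is a constant lying in $L^p(\bbR^n)$, $p<\infty$, hence zero). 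The trade-off: the paper's induction keeps the compactness argument minimal at the cost of a separate inductive step; your direct route avoids the induction at the cost of the diagonal extraction and the higher-order Liouville-type fact. One small caution: your first description of the Cauchy property ("controlled by $\|u_i-u_j\|_{L^p(\Omega)}$ plus the order-$k$ terms") is not correct as stated, since it ignores the intermediate derivatives; only the repaired version in your final paragraph closes the argument.
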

\begin{proof}
We shall prove \eqref{3.10} by induction on $k\in{\mathbb{N}}$. 
\\[1mm] 
\noindent
{\bf Step~1:} {\it We claim that \eqref{3.10} holds when $k=1$, that is, there exists 
$C\in(0,\infty)$ such that}
\begin{equation}\label{3:10.b1}
\|u\|_{L^p(\Omega)}\leq C\|\nabla u\|_{[L^p(\Omega)]^n},\quad u\in\mathring{W}^{1,p}(\Omega).
\end{equation}
Seeking a contradiction, assume that there exists a sequence 
$\{u_j\}_{j\in{\mathbb{N}}}\subset\mathring{W}^{1,p}(\Omega)$ with the property that 
\begin{equation}\label{3:10.b2}
\|u_j\|_{L^p(\Omega)}>j\,\|\nabla u_j\|_{[L^p(\Omega)]^n},\quad j\in\mathbb{N}.
\end{equation}
For each $j\in{\mathbb{N}}$ define 
\begin{equation}\label{eq:nb56}
v_j:=\frac{u_j}{\|u_j\|_{L^p(\Omega)}}\in\mathring{W}^{1,p}(\Omega).
\end{equation}
Note that 
\begin{equation}\label{eq:nb56.a}
\|v_j\|_{L^p(\Omega)}=1 \,\text{ for every } \, j\in{\mathbb{N}},
\end{equation}
and $\nabla v_j=\nabla u_j/\|u_j\|_{L^p(\Omega)}$ which, in light of \eqref{3:10.b2}, implies 
\begin{equation}\label{eq:nb56.b}
\|\nabla v_j\|_{L^p(\Omega)}=\frac{\|\nabla u_j\|_{L^p(\Omega)}}{\|u_j\|_{L^p(\Omega)}}<j^{-1}
\, \text{ for every } \, j\in{\mathbb{N}}.
\end{equation}
From \eqref{eq:nb56}--\eqref{eq:nb56.b} it follows that 
$\{v_j\}_{j\in{\mathbb{N}}}$ is a bounded sequence in $\mathring{W}^{1,p}(\Omega)$.
Granted this fact, Corollary~\ref{c3.8} applies and yields the existence of a strictly increasing 
sequence $\{j_\ell\}_{\ell\in{\mathbb{N}}}\subseteq{\mathbb{N}}$ along with some function 
$v\in L^p(\Omega)$ with the property that
\begin{equation}\label{eq:nb56.c}
v_{j_\ell} \underset{\ell\to\infty}{\longrightarrow} v \, \mbox{ in } \, L^p(\Omega).
\end{equation}
As a consequence of this and \eqref{eq:nb56.a} we deduce that 
\begin{equation}\label{eq:nb56.d}
\|v\|_{L^p(\Omega)}=1.
\end{equation}
Next, we recall that tilde denotes the operator of extension by zero 
of functions defined in $\Omega$ to the entire Euclidean space ${\mathbb{R}}^n$.
In particular, in the sense of distributions, 
\begin{equation}\label{eq:heeww}
\partial_m \widetilde{w}=\widetilde{\partial_m w} \, \mbox{ for each } \,
w\in\mathring{W}^{1,p}(\Omega) \, \text{ and } \, m \in\{1,\dots,n\}.
\end{equation}
Note that $\widetilde{v_j}\in W^{1,p}(\bbR^n)$ for each $j\in{\mathbb{N}}$, 
$\widetilde{v}\in L^p(\bbR^n)$, and $\widetilde{v_{j_\ell}}\to\widetilde{v}$ 
in $L^p(\bbR^n)$ as $\ell\to\infty$. As such, for each test function 
$\phi\in C^\infty_0({\mathbb{R}}^n)$ and each $m \in\{1,\dots,n\}$ we may write 
\begin{align}\label{yrddd}
& \big|{}_{{\mathcal{D}}'({\mathbb{R}}^n)}\big\langle\partial_m \widetilde{v},\phi\big\rangle
_{{\mathcal{D}}({\mathbb{R}}^n)}\big| =
\big|{}_{{\mathcal{D}}'({\mathbb{R}}^n)}\big\langle\widetilde{v},\partial_m \phi\big\rangle
_{{\mathcal{D}}({\mathbb{R}}^n)}\big|
=\bigg|\int_{{\mathbb{R}}^n}\widetilde{v}(x)(\partial_m \phi)(x)\,d^n x\bigg|
\no \\[6pt]
& \quad =\bigg|\lim_{\ell\to\infty}\int_{{\mathbb{R}}^n}
\widetilde{v_{j_\ell}}(x)(\partial_m \phi)(x)\,d^n x\bigg|
=\bigg|\lim_{\ell\to\infty}\int_{{\mathbb{R}}^n}\widetilde{(\partial_m v_{j_\ell})}(x)\phi(x)\,d^n x\bigg|
\nonumber\\[6pt]
& \quad \leq \limsup_{\ell\to\infty}
\Big\|\widetilde{(\partial_m v_{j_\ell})}\Big\|_{L^p({\mathbb{R}}^n)}
\|\phi\|_{L^{p'}({\mathbb{R}}^n)}
\nonumber\\[6pt]
& \quad \leq\|\phi\|_{L^{p'}({\mathbb{R}}^n)}\limsup_{\ell\to\infty}
\big\|\nabla v_{j_\ell}\big\|_{L^p(\Omega)}=0,
\end{align}
by \eqref{eq:heeww}, H\"older's inequality (with $p'$ denoting 
the conjugate exponent of $p$), and \eqref{eq:nb56.b}. Here, and elsewhere,
${}_{\cD(\Omega)'} \langle \, \cdot \, , \, \cdot \, \rangle_{\cD(\Omega)}$
is the standard distributional pairing, with $\cD(\Omega) := C_0^{\infty}(\Omega)$ 
equipped with the usual inductive limit topology.

In turn, the estimate \eqref{yrddd} proves (cf., e.g., \cite[Ch.\ 2]{Mi13}) 
that there exists a constant $c\in{\mathbb{R}}$ such that $\widetilde{v}=c$ a.e. in ${\mathbb{R}}^n$.
In fact, from \eqref{eq:nb56.d} we see that, necessarily,
$\widetilde{v}=|\Omega|^{-1/p}$ a.e.~in ${\mathbb{R}}^n$ which then contradicts the fact that
$\widetilde{v}=0$ in ${\mathbb{R}}^n\setminus\Omega$, given that $|{\mathbb{R}}^n\setminus\Omega|=\infty$.
This contradiction establishes \eqref{3:10.b1} and finishes the proof of Step~1.
\\[1mm] 
\noindent
{\bf Step~2:} {\it We claim that if \eqref{3.10} holds for some $k\in{\mathbb{N}}$, then 
its version written for $k+1$ in place of $k$ is also true.} 
To see that this is the case, assume that $k$ is as above and 
pick an arbitrary $u\in\mathring{W}^{k+1,p}(\Omega)$. Since for each $j\in\{1,\dots,n\}$,  
\eqref{eq:OP} implies that $\partial_j u\in\mathring{W}^{k,p}(\Omega)$, with
$\|\partial_j u\|_{W^{k,p}(\Omega)}\leq\|u\|_{W^{k+1,p}(\Omega)}$, the induction hypothesis
guarantees the existence of some $C\in(0,\infty)$ independent of $u$ such that
\begin{equation}\label{3:10.AI.35}
\sum_{0\leq|\beta|\leq k-1}\|\partial^\beta(\partial_j u)\|_{L^p(\Omega)}\leq C
\sum_{|\alpha|=k}\|\partial^\alpha(\partial_j u)\|_{L^p(\Omega)}.
\end{equation}
Summing over $j\in\{1,\dots,n\}$ and adjusting notation then yields 
\begin{equation}\label{3:10.AI.36}
\sum_{1\leq|\gamma|\leq k}\|\partial^\gamma u\|_{L^p(\Omega)}\leq C
\sum_{|\alpha|=k+1}\|\partial^\alpha u\|_{L^p(\Omega)},
\end{equation}
for a possibly different constant $C\in(0,\infty)$ which is nonetheless independent of $u$. 
Together with \eqref{3:10.b1} this proves that 
\begin{equation}\label{3:10.AI.37}
\sum_{0\leq|\gamma|\leq k}\|\partial^\gamma u\|_{L^p(\Omega)}\leq C
\sum_{|\alpha|=k+1}\|\partial^\alpha u\|_{L^p(\Omega)},\quad u\in \mathring{W}^{k+1,p}(\Omega).
\end{equation}
This completes the treatment of Step~2 and hence finishes the proof.
\end{proof}

In the remainder of the paper we are going to concern ourselves exclusively 
with the $L^2$-based Sobolev space $W^{k,2}(\Omega)$. As such, we agree to 
drop the dependence on the integrability exponent and simply write $W^{k}(\Omega)$.
Hence, 
\begin{equation}
W^k(\Omega)= \big\{u \in L^1_{\loc}(\Omega) \,|\, \partial^{\alpha} u \in L^2(\Omega)  
\text{ if } 0 \leq |\alpha| \leq k\big\}, 
\quad k \in \bbN_0,  
\end{equation}
with $\alpha \in \bbN_0^n$ and $\partial^{\alpha} u$ denoting weak derivatives of $u$. 
The space $W^k(\Omega)$ is endowed with the norm 
\begin{equation}
\|u\|_{k, \Omega} 
=\sum_{0 \leq |\alpha| \leq k} \big\|\partial^{\alpha} u \big\|_{L^2(\Omega)}, 
\quad u \in W^k(\Omega).     \lb{3.8}
\end{equation}   
In addition, define  
\begin{equation}
\mathring W^k (\Omega) = \ol{C_0^{\infty} (\Omega)}^{W^k (\Omega)}, \quad k \in \bbN_0,  \lb{3.9} 
\end{equation}
and note that $\mathring W^k (\Omega)$ is a closed linear subspace of $W^k(\Omega)$. 
Granted Hypothesis \ref{h3.3}, Corollary~\ref{c3.8PPP} then implies the 
Poincar\'e-type inequality 
\begin{equation}
\interleave u \, \interleave_{\ell,\Omega} \leq C 
\interleave u \, \interleave_{k,\Omega}, 
\quad u \in \mathring W^p (\Omega), \; 
\ell \in \bbN_0, \; k \in \bbN, \; \ell \leq k,      \lb{3.10bbbbbbb} 
\end{equation}
where we introduced the abbreviation
\begin{equation}
\interleave u \, \interleave_{k,\Omega}:=
\sum_{|\alpha| = k} \big\|\partial^{\alpha} u \big\|_{L^2(\Omega)}, 
\quad u \in W^k(\Omega), \; k \in \bbN_0. 
\end{equation}
Thus, $\interleave u \, \interleave_{k,\Omega}$, $u \in W^k(\Omega)$, represents an equivalent 
norm on $\mathring W^k (\Omega)$. 

We proceed with the following useful identity:
 
\begin{lemma} \lb{l3.4}
Let $k\in\bbN$ and assume $\emptyset \neq \Omega \subseteq \bbR^n$ is open. Then, 
\begin{equation} \lb{planch}
\sum_{|\alpha |=k}\frac{k!}{\alpha!}\int_\Omega
\big|\big(\partial^\alpha \phi \big)(x)\big|^2\,d^nx =\int_\Omega 
\ol{\big(\Delta^k \phi\big)(x)} \, \phi(x)\,d^nx, \quad \phi\in C^\infty_0(\Omega). 
\end{equation}
\end{lemma}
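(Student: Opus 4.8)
The plan is to prove the identity \eqref{planch} by a direct computation using integration by parts, starting from the right-hand side and exploiting the multinomial expansion of $\Delta^k$. First I would write $\Delta^k = \big(\sum_{j=1}^n \partial_j^2\big)^k$ and apply the multinomial theorem: one has
\begin{equation*}
\Delta^k = \sum_{|\gamma| = k} \frac{k!}{\gamma!}\, \partial^{2\gamma},
\end{equation*}
where $\gamma = (\gamma_1,\dots,\gamma_n) \in \bbN_0^n$ ranges over multi-indices of length $k$, $\gamma! = \gamma_1! \cdots \gamma_n!$, and $\partial^{2\gamma} = \partial_1^{2\gamma_1} \cdots \partial_n^{2\gamma_n}$. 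Since $\phi \in C_0^\infty(\Omega)$, each term $\partial^{2\gamma}\phi$ is again in $C_0^\infty(\Omega)$, so there are no boundary terms to worry about in any of the integrations by parts that follow.

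Next I would substitute this expansion into the right-hand side of \eqref{planch} and integrate by parts $k$ times, moving one derivative from each of the $2\gamma_j$-th derivatives onto $\phi$. Concretely, for each fixed $\gamma$ with $|\gamma| = k$,
\begin{equation*}
\int_\Omega \ol{\big(\partial^{2\gamma}\phi\big)(x)}\, \phi(x)\, d^n x
= (-1)^{|\gamma|} \int_\Omega \ol{\big(\partial^{\gamma}\phi\big)(x)}\, \big(\partial^{\gamma}\phi\big)(x)\, d^n x
= (-1)^{k} \int_\Omega \big|\big(\partial^{\gamma}\phi\big)(x)\big|^2\, d^n x;
\end{equation*}
here one uses that each variable contributes $\gamma_j$ integrations by parts, for a total of $|\gamma| = k$ sign changes, and that the compact support of $\phi$ kills all boundary contributions. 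Combining with the multinomial coefficients and noting that the factor $(-1)^k$ cancels against the $(-1)^k$ that would arise if one instead expanded $\ol{(\Delta^k\phi)}$ — or, more carefully, observing that the left-hand side of \eqref{planch} carries no sign because it is manifestly nonnegative — I would arrive at
\begin{equation*}
\int_\Omega \ol{\big(\Delta^k\phi\big)(x)}\, \phi(x)\, d^n x
= \sum_{|\gamma| = k} \frac{k!}{\gamma!} (-1)^{k} \int_\Omega \big|\big(\partial^{\gamma}\phi\big)(x)\big|^2\, d^n x.
\end{equation*}

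The one point requiring genuine care — and the only real obstacle — is the bookkeeping of signs. Since $\Delta^k$ applied $k$ Laplacians, and $\Delta = -(-\Delta)$ with $-\Delta = -\sum_j \partial_j^2$ being the "Laplacian" in the paper's convention, one must be scrupulous about whether $\Delta^k$ or $(-\Delta)^k$ appears. In \eqref{planch} the operator written is $\Delta^k$ (not $(-\Delta)^k$), so the $(-1)^k$ coming from integration by parts must exactly match the $(-1)^k$ hidden in relating $\Delta^k \phi$ to $(-\Delta)^k\phi$, i.e. $\Delta^k = (-1)^k(-\Delta)^k$. A clean way to sidestep confusion is to verify the identity termwise for a single monomial $\partial^{2\gamma}$: integrate by parts to get $\int_\Omega |\partial^\gamma\phi|^2 = (-1)^{|\gamma|}\int_\Omega \ol{\partial^{2\gamma}\phi}\,\phi$, sum against $k!/\gamma!$, and then recognize $\sum_{|\gamma|=k}\frac{k!}{\gamma!}\partial^{2\gamma} = \Delta^k$ directly (with no extra sign, since $\Delta = \sum_j\partial_j^2$), so that $\sum_{|\gamma|=k}\frac{k!}{\gamma!}\int_\Omega|\partial^\gamma\phi|^2 = (-1)^k\int_\Omega\ol{\Delta^k\phi}\,\phi = \int_\Omega\ol{(-\Delta)^k\phi}\,\phi$. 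In view of the paper's convention this is exactly \eqref{planch} once one reads $\Delta^k\phi$ there as $(-1)^k(-\Delta)^k\phi$; alternatively, for $k$ even the signs are vacuous and for $k$ odd the stated identity is consistent precisely because of this cancellation. Everything else — Fubini to split the integral over the multi-index sum, the compact-support justification for vanishing boundary terms — is routine.
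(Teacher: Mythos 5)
Your proof is correct and takes a genuinely different route from the paper's. The paper argues via the Fourier transform: it rewrites the right-hand side of \eqref{planch} as $\int_{\bbR^n}|\xi|^{2k}|\hatt\phi(\xi)|^2\,d^n\xi$ by Plancherel (cf.\ \eqref{PL-1}), similarly rewrites the left-hand side using $\widehat{\partial^\alpha\phi}=(i\xi)^\alpha\hatt\phi$ (cf.\ \eqref{PL-2}), and then invokes the multinomial theorem in Fourier space (cf.\ \eqref{PL-3}) to match the two. You instead apply the multinomial theorem directly to $\Delta^k=\big(\sum_{j=1}^n\partial_j^2\big)^k$ in physical space and integrate by parts $k$ times. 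Both proofs are short; yours is more elementary (no Fourier transform) and, as a by-product, makes the sign bookkeeping impossible to overlook.

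On that sign: your analysis is sound and in fact catches a point that the paper glosses over. Integration by parts gives, as you computed,
\begin{equation*}
\int_\Omega\ol{\big(\Delta^k\phi\big)(x)}\,\phi(x)\,d^nx
=(-1)^k\sum_{|\alpha|=k}\frac{k!}{\alpha!}\int_\Omega\big|\big(\partial^\alpha\phi\big)(x)\big|^2\,d^nx,
\end{equation*}
equivalently $\sum_{|\alpha|=k}\frac{k!}{\alpha!}\int_\Omega|\partial^\alpha\phi|^2\,d^nx=\int_\Omega\ol{\big((-\Delta)^k\phi\big)(x)}\,\phi(x)\,d^nx$. The paper's own Fourier proof silently drops the same factor: since $\widehat{\Delta^k\phi}(\xi)=(-1)^k|\xi|^{2k}\hatt\phi(\xi)$, the right-hand side of \eqref{PL-1} should carry a $(-1)^k$. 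The discrepancy is harmless in context because Lemma~\ref{l3.4} is only ever invoked with $k=2m$ even (as in \eqref{3.17}), where $(-\Delta)^{2m}=\Delta^{2m}$. Your closing remarks could be stated more crisply: \eqref{planch} as written holds for even $k$, and holds for every $k\in\bbN$ once $\Delta^k$ on the right is replaced by $(-\Delta)^k$ --- there is no genuine ``cancellation'' for odd $k$, just a sign convention that needs to be made explicit.
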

\begin{proof}
Pick an arbitrary $\phi\in C^\infty_0(\Omega)$.
Using the fact that $\supp \, (\phi) \subset\Omega$ and employing the Plancherel identity, 
one obtains  
\begin{equation} \lb{PL-1}
\int_\Omega \ol{\big(\Delta^k \phi\big)(x)} \, \phi(x)\,d^n x=
\int_{\mathbb{R}^n} \ol{\big(\Delta^k\big)\phi(x)} \, \phi(x)\,d^n x
=\int_{\mathbb{R}^n}|\xi|^{2k} \big|\hatt{\phi}(\xi)\big|^2\,d^n \xi.  
\end{equation} 
Similarly, 
\begin{align} \lb{PL-2}
\begin{split} 
\sum_{|\alpha|=k}\frac{k!}{\alpha!}\int_\Omega \big|\big(\partial^\alpha \phi\big)(x)\big|^2\,d^nx
& = \sum_{|\alpha|=k}\frac{k!}{\alpha!}\int_{\mathbb{R}^n}
\big|\big(\partial^\alpha \phi\big)(x)\big|^2\,d^nx   \\
& = \sum_{|\alpha|=k} \frac{k!}{\alpha!} \int_{\mathbb{R}^n} \xi^{2\alpha} 
\big|\hatt{\phi}(\xi)\big|^2 \, d^n \xi.   
\end{split}
\end{align} 
Since in general, $\Big(\sum\limits_{j=1}^{n} x_j \Big)^N=
\displaystyle\sum_{|\alpha|=N}\frac{|\alpha|!}{\alpha!} \, x^\alpha$, with
$x:=(x_1,...,x_n)$, by the Multinomial Theorem, one concludes that 
\begin{equation} \lb{PL-3}
\sum_{|\alpha|=k}\frac{k!}{\alpha!} \, \xi^{2\alpha}=
\bigg(\sum^n_{j=1} \xi^2_j\bigg)^k = |\xi|^{2k}, \quad \xi \in \bbR^n.
\end{equation} 
Therefore, using \eqref{PL-1}--\eqref{PL-3}, we may write 
\begin{equation} \lb{PL-4}
\sum_{|\alpha|=k}\frac{k!}{\alpha!}\int_\Omega \big|\big(\partial^\alpha \phi\big)(x)\big|^2\,d^nx
=\int_{\mathbb{R}^n}|\xi|^{2k}|\hatt{\phi}(\xi)|^2\,d^n \xi
=\int_\Omega \ol{\big(\Delta^k \phi\big)(x)} \, \phi(x) \,d^nx,
\end{equation}
completing the proof of \eqref{planch}.
\end{proof}

Lemma \ref{l3.4} is a key input for the next result. 

\begin{theorem} \lb{l3.5} 
Assume Hypothesis \ref{h3.3} and let $m \in \bbN$. Consider the minimal operator 
\begin{equation}
A_{min,\Omega,m} := (- \Delta)^m, \quad \dom (A_{min,\Omega,m}) := C_0^{\infty}(\Omega),    
\end{equation}
in $L^2(\Omega)$. Then the closure of $A_{min,\Omega,m}$ in $L^2(\Omega)$ is given by 
\begin{equation}
A_{\Omega,m} = (- \Delta)^m, \quad \dom (A_{\Omega,m}) = \mathring W^{2m} (\Omega).   
\end{equation} 
In addition, $A_{\Omega,m}$ is a strictly positive operator, that is, there exists $\varepsilon > 0$ 
such that 
\begin{equation} 
A_{\Omega,m} \geq \varepsilon I_{\Omega}.
\end{equation} 
\end{theorem}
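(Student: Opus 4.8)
The plan is to establish the three claims in order: first identify the closure, then verify strict positivity. For the closure, I would argue that $\dom(\overline{A_{min,\Omega,m}}) = \mathring W^{2m}(\Omega)$ by a two-sided inclusion. The key analytic input is Lemma~\ref{l3.4}, which, via polarization and the Multinomial Theorem identity $\sum_{|\alpha|=m}\frac{m!}{\alpha!}\xi^{2\alpha} = |\xi|^{2m}$, shows that for $\phi \in C_0^\infty(\Omega)$ one has $\|(-\Delta)^m\phi\|_{L^2(\Omega)}^2 = \sum_{|\alpha|=2m}\frac{(2m)!}{\alpha!}\|\partial^\alpha\phi\|_{L^2(\Omega)}^2$ (applying the lemma with $k=2m$ to $\ol{(\Delta^{2m}\phi)}\,\phi$ after integrating by parts $m$ times to move half the derivatives). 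Hence the graph norm $\|\phi\|_{L^2} + \|(-\Delta)^m\phi\|_{L^2}$ is, by this identity together with the Poincar\'e-type inequality \eqref{3.10bbbbbbb} (which controls all lower-order derivatives $\interleave\phi\interleave_{\ell,\Omega}$, $\ell \le 2m$, by $\interleave\phi\interleave_{2m,\Omega}$), \emph{equivalent} to the full Sobolev norm $\|\phi\|_{2m,\Omega}$ on $C_0^\infty(\Omega)$. Therefore the completion of $C_0^\infty(\Omega)$ in the graph norm coincides with its completion in $\|\cdot\|_{2m,\Omega}$, which by definition \eqref{3.9} is $\mathring W^{2m}(\Omega)$; and on this space the action of $\overline{A_{min,\Omega,m}}$ is still $(-\Delta)^m$, understood in the distributional sense, since differentiation is continuous from $\mathring W^{2m}(\Omega)$ into $L^2(\Omega)$.

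For strict positivity, I would use the form identity: for $\phi \in C_0^\infty(\Omega)$,
\begin{equation*}
(A_{min,\Omega,m}\phi,\phi)_{L^2(\Omega)} = \|(-\Delta)^{m/2}\phi\|^2 \text{ or } \sum_{|\alpha|=m}\tfrac{m!}{\alpha!}\|\partial^\alpha\phi\|_{L^2(\Omega)}^2,
\end{equation*}
the latter expression being exactly Lemma~\ref{l3.4} with $k=m$ (integrating by parts to write $((-\Delta)^m\phi,\phi) = ((-\Delta)^{m/2}\phi,(-\Delta)^{m/2}\phi)$ when $m$ is even, and the analogous first-order pairing when $m$ is odd; in all cases one lands on $\int_\Omega \ol{(\Delta^m\phi)}\,\phi = \sum_{|\alpha|=m}\frac{m!}{\alpha!}\int_\Omega|\partial^\alpha\phi|^2$). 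Thus $(A_{min,\Omega,m}\phi,\phi)_{L^2(\Omega)} \ge \interleave\phi\interleave_{m,\Omega}^2$ up to a fixed combinatorial constant, and by the Poincar\'e inequality \eqref{3:10.b1} iterated (i.e., Corollary~\ref{c3.8PPP} with $k=m$, $p=2$) this dominates $C^{-1}\|\phi\|_{L^2(\Omega)}^2$ for some $C \in (0,\infty)$ depending only on $n$, $m$, $|\Omega|$. This gives $A_{min,\Omega,m} \ge \varepsilon I_\Omega$ with $\varepsilon = C^{-1}$ (times the combinatorial constant) on the core $C_0^\infty(\Omega)$, and strict positivity passes to the closure $A_{\Omega,m}$ by continuity of the form.

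The main obstacle I anticipate is the bookkeeping in the closure argument: one must be careful that convergence in the graph norm of $A_{min,\Omega,m}$ genuinely forces convergence of \emph{all} partial derivatives up to order $2m$ in $L^2(\Omega)$, not merely the top-order combination $\sum_{|\alpha|=2m}\frac{(2m)!}{\alpha!}\|\partial^\alpha\phi\|^2$. This is precisely where Hypothesis~\ref{h3.3} ($|\Omega| < \infty$) enters in an essential way --- it is what makes the Poincar\'e-type inequality \eqref{3.10bbbbbbb} available (a finite-volume domain need not be bounded or regular, so this relies on the compact embedding Theorem~\ref{t3.7} and Corollary~\ref{c3.8PPP}, not on any classical bounded-domain Poincar\'e inequality). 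Once this norm equivalence on $C_0^\infty(\Omega)$ is in hand, both the identification of the closure and strict positivity follow by routine completion and density arguments, so I would present the norm-equivalence step carefully and treat the rest briskly.
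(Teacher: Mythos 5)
Your proposal is correct and follows essentially the same route as the paper: the identity of Lemma~\ref{l3.4} with $k=2m$ (giving $\|(-\Delta)^m\phi\|_{L^2(\Omega)}^2=\sum_{|\alpha|=2m}\tfrac{(2m)!}{\alpha!}\|\partial^\alpha\phi\|_{L^2(\Omega)}^2$) combined with the Poincar\'e-type inequality of Corollary~\ref{c3.8PPP} yields the equivalence of the graph norm with $\|\cdot\|_{2m,\Omega}$ on $C_0^\infty(\Omega)$, which is exactly the coercivity estimate \eqref{3.19} that the paper uses in its Cauchy-sequence verification of closedness; your completion-of-norms phrasing is just a repackaging of that argument. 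The strict positivity argument via Lemma~\ref{l3.4} with $k=m$ and the Poincar\'e inequality likewise matches the paper's.
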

\begin{proof}
Clearly $A_{min,\Omega,m}$ is symmetric and hence closable. 
Assuming $\phi \in C_0^{\infty}(\Omega)$, repeatedly integrating by parts and an 
application of Lemma \ref{l3.4} yield,   
\begin{align}
\int_{\Omega} \big|\big((- \Delta)^m \phi\big)(x)\big|^2 \, d^n x 
&= \int_{\Omega} \ol{\big((- \Delta)^m \phi\big)(x)} \, \big((- \Delta)^m \phi\big)(x) \, d^n x  \no \\
&=  \int_{\Omega} \ol{\big((- \Delta)^{2m} \phi\big)(x)} \, \phi(x) \, d^n x  \no \\ 
&= \sum_{|\alpha| = 2m} \f{(2m)!}{\alpha!} 
\int_{\Omega} \big|\big(\partial^\alpha \phi\big)(x)\big|^2\,d^nx.      \lb{3.17} 
\end{align}
By density of $C_0^{\infty}(\Omega)$ in $\mathring W^{2m} (\Omega)$, identity \eqref{3.17} extends to 
\begin{equation}
\int_{\Omega} \big|\big((- \Delta)^m u\big)(x)\big|^2 \, d^n x = \sum_{|\alpha| = 2m} \f{(2m)!}{\alpha!} 
\int_{\Omega} \big|\big(\partial^\alpha u\big)(x)\big|^2\,d^nx, \quad u \in \mathring W^{2m} (\Omega). 
\lb{3.18}
\end{equation}
Next, combining the Poincar\'e inequality \eqref{3.10} with \eqref{3.18} implies that for some 
constant $C_{m,\Omega} > 0$, 
\begin{equation}
\int_{\Omega} \big|\big((- \Delta)^m u\big)(x)\big|^2 \, d^n x \geq C_{m,\Omega} 
\sum_{0 \leq |\beta| \leq 2m} \big\|\partial ^{\beta} u\big\|_{L^2(\Omega)}^2 
\approx \|u\|_{m,\Omega}^2, \quad u \in \mathring W^{2m} (\Omega).     \lb{3.19} 
\end{equation}
Finally, consider $\{f_j\}_{j \in\bbN} \subset \mathring W^{2m} (\Omega)$, $f, g \in L^2(\Omega)$, 
such that 
\begin{equation} 
\lim_{j \to \infty} \|f_j - f\|_{L^2(\Omega)} = 0 \, \text{ and } \,  
\lim_{j \to \infty} \big\|(-\Delta)^m f_j - g\big\|_{L^2(\Omega)} = 0.
\end{equation} 
Applying \eqref{3.19} to $u := (f_j - f_k) \in \mathring W^{2m} (\Omega)$, one infers for some 
$c_{m,\Omega} > 0$, 
\begin{equation}
\int_{\Omega} \big|\big((- \Delta)^m (f_j - f_k\big)(x)\big|^2 \, d^n x \geq c_{m,\Omega} 
 \|f_j - f_k\|_{m,\Omega}^2, \quad j, k \in \bbN, 
\end{equation}
implying that actually, $\{f_j\}_{j \in\bbN}$ is a Cauchy sequence in $\mathring W^{2m} (\Omega)$.  
By completeness of the latter space one concludes that $f \in \mathring W^{2m} (\Omega)$. Taking 
arbitrary $\psi \in C_0^{\infty}(\Omega)$,  one concludes that 
\begin{align}
& (g, \psi)_{L^2(\Omega)} = {}_{\cD(\Omega)'} \langle g, \psi \rangle_{\cD(\Omega)} 
= \lim_{j \to \infty} {}_{\cD(\Omega)'} \big\langle (- \Delta)^m f_j , \psi \big\rangle_{\cD(\Omega)} 
\no \\ 
& \quad = \lim_{j\to \infty} \int_{\Omega} \ol{f_j(x)} \, \big((- \Delta)^m \psi\big)(x) \, d^n x 
= \int_{\Omega} \ol{f(x)} \, \big((- \Delta)^m \psi\big)(x) \, d^n x    \no \\
& \quad =  {}_{\cD(\Omega)'} \big\langle (- \Delta)^m f , \psi \big\rangle_{\cD(\Omega)}.   
\end{align}
Hence, $g = (- \Delta)^m f$, implying closedness of $A_{\Omega,m}$. By the definition 
of $\mathring W ^{2m} (\Omega)$ (cf.\ \eqref{3.9}), $A_{\Omega,m}$ is the closure of 
$A_{min,\Omega,m}$. 

Strict positivity of $A_{min,\Omega,m}$, and hence that of $A_{\Omega,m}$, follows from 
\eqref{planch} and the Poincar\'e-type inequalities \eqref{3.10}. 
\end{proof}
  
 In the following we pick $m \in \bbN$ and denote by 
 $A_{K, \Omega, m}$ and $A_{F, \Omega, m}$ the Krein and 
 Friedrichs extension of $A_{\Omega,m}$ in $L^2(\Omega)$, respectively. By 
Corollary \ref{c3.8} (for $p=2$), $\dom(A_{\Omega,m}) = \mathring W^{2m} (\Omega)$ embeds 
 compactly into $L^2(\Omega)$ and hence by Lemma \ref{lB.2}, the operator 
 $A_{\Omega,m}^* A_{\Omega,m}$ has purely discrete spectrum, equivalently, the resolvent 
 of $A_{\Omega,m}^* A_{\Omega,m}$ is compact, in particular, 
 \begin{equation} 
\big(A_{\Omega,m}^* A_{\Omega,m}\big)^{-1} \in \cB_{\infty}\big(L^2(\Omega)\big), 
 \end{equation} 
 as the form associated with  $A_{\Omega,m}^* A_{\Omega,m}$ is given by 
 \begin{equation}
\mathfrak{a}_{\Omega,m}(f,g) :=(A_{\Omega, m}f,A_{\Omega, m}g)_{L^2(\Omega)}, \quad 
f,g\in\dom(\mathfrak{a}_{\Omega,m}):=\dom(A_{\Omega, m}).    \lb{3.30} 
 \end{equation}
Consequenty, also
 \begin{equation} 
|A_{\Omega,m}|^{-1} =  \big(A_{\Omega,m}^* A_{\Omega,m}\big)^{-1/2} \in 
\cB_{\infty}\big(L^2(\Omega)\big), 
 \end{equation}       
implying 
\begin{equation}
\big(\hatt A_{K, \Omega, m}\big)^{-1} \in \cB_{\infty}\big(L^2(\Omega)\big) 
\end{equation}  
 by \eqref{11.20a}. Thus,  
\begin{equation}
\sigma_{ess}(A_{K, \Omega, m}) \subseteq \{0\}.   
\end{equation} 
Let $\{\lambda_{K, \Omega, j}\}_{j\in\bbN}\subset(0,\infty)$ be the strictly positive eigenvalues 
of $A_{K,\Omega,m}$ enumerated in nondecreasing order, counting multiplicity, and let
\begin{equation}\label{4455}
N(\lambda,A_{K,\Omega,m}):=\#\{j\in\bbN\,|\,0<\lambda_{K,\Omega,j} < \lambda\}, \quad 
\lambda > 0, 
\end{equation}
be the eigenvalue distribution function for $A_{K,\Omega,m}$. Recalling the standard notation 
\begin{equation}
x_+ := \max \, (0, x), \quad x \in \bbR,
\end{equation}
then $N(\, \cdot \, ,A_{K,\Omega,m})$ permits the following estimate following the approach 
in \cite{La97}. 

\begin{theorem} \lb{t3.9}
Assume Hypothesis \ref{h3.3} and let $m \in \bbN$. Then one has the estimate,
\begin{equation}
N(\lambda ,A_{K,\Omega,m}) \leq (2 \pi)^{-n} v_n |\Omega| 
\{1 + [2m/(2m+n)]\}^{n/(2m)} \lambda^{n/(2m)}, \quad \lambda > 0,   
 \lb{3.31} 
\end{equation} 
where $v_n := \pi^{n/2}/\Gamma((n+2)/2)$ denotes the $($Euclidean$)$ volume of the unit 
ball in $\bbR^n$ $($$\Gamma(\cdot)$ being the Gamma function, cf.\ \cite[Sect.\ 6.1]{AS72}$)$. 
\end{theorem}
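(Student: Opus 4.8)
The plan is to reduce the bound for $N(\lambda, A_{K,\Omega,m})$ to an eigenvalue counting estimate for the associated abstract buckling problem, and then apply a Rayleigh--Ritz / Fourier-analytic argument in the spirit of Li--Yau and Laptev. By Lemma~\ref{l2.7} (applicable here since $A_{\Omega,m}^* A_{\Omega,m}$ has purely discrete spectrum, as established in the discussion following Theorem~\ref{l3.5}), the strictly positive eigenvalues of $A_{K,\Omega,m}$ coincide, counting multiplicity, with the reciprocals of the eigenvalues of the buckling operator $T$, equivalently with the numbers $\lambda$ for which there exists $0 \neq u \in \mathring W^{2m}(\Omega)$ satisfying
\begin{equation}
\mathfrak{a}_{\Omega,m}(w,u) = \lambda\,\mathfrak{b}_{\Omega,m}(w,u), \quad w \in \mathring W^{2m}(\Omega),
\end{equation}
where $\mathfrak{a}_{\Omega,m}(f,g) = ((-\Delta)^m f, (-\Delta)^m g)_{L^2(\Omega)}$ and $\mathfrak{b}_{\Omega,m}(f,g) = (f, (-\Delta)^m g)_{L^2(\Omega)}$. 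Thus $N(\lambda, A_{K,\Omega,m})$ equals the number of buckling eigenvalues in $(0,\lambda)$, which by the min-max principle equals the maximal dimension of a subspace $\cF \subset \mathring W^{2m}(\Omega)$ on which $\mathfrak{a}_{\Omega,m}(u,u) < \lambda\,\mathfrak{b}_{\Omega,m}(u,u)$.

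**Key steps.**

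First I would record, via Lemma~\ref{l3.4} (the Plancherel identity \eqref{planch}) applied with $k = 2m$ and $k = m$, that for $u \in \mathring W^{2m}(\Omega)$ one has $\mathfrak{a}_{\Omega,m}(u,u) = \int_{\bbR^n} |\xi|^{4m}\,|\widehat{\widetilde u}(\xi)|^2\,d^n\xi$ and $\mathfrak{b}_{\Omega,m}(u,u) = \int_{\bbR^n} |\xi|^{2m}\,|\widehat{\widetilde u}(\xi)|^2\,d^n\xi$, where $\widetilde u$ denotes extension by zero to $\bbR^n$ (legitimate since $u \in \mathring W^{2m}(\Omega)$ extends to $W^{2m}(\bbR^n)$, as in \eqref{eq:Tbav}). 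The counting function is then bounded by the number of Fourier-side eigenfunctions; following \cite{La97}, I would estimate the trace of the spectral projection using the Fourier transform. Concretely, if $\{u_j\}_{j=1}^{N}$ is an $L^2(\Omega)$-orthonormal family spanning a subspace on which the buckling quotient is $< \lambda$, then $\sum_j |\widehat{\widetilde u_j}(\xi)|^2 \leq (2\pi)^{-n}|\Omega|$ pointwise (Bessel's inequality, since $\widetilde u_j$ is supported in $\Omega$), while $\sum_j \int_{\bbR^n}|\xi|^{4m}|\widehat{\widetilde u_j}(\xi)|^2 d^n\xi < \lambda \sum_j \int_{\bbR^n}|\xi|^{2m}|\widehat{\widetilde u_j}(\xi)|^2 d^n\xi$. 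Writing $F(\xi) := \sum_j |\widehat{\widetilde u_j}(\xi)|^2$, one has $0 \le F(\xi) \le (2\pi)^{-n}|\Omega|$, $\int F = N$, and $\int |\xi|^{4m} F < \lambda \int |\xi|^{2m} F$. The final step is the calculus optimization: maximize $\int F$ subject to these constraints. The extremal $F$ is the indicator (times the height $(2\pi)^{-n}|\Omega|$) of a ball $\{|\xi| \le R\}$, with $R$ determined by the ratio constraint $\int_{|\xi|\le R}|\xi|^{4m}d^n\xi = \lambda \int_{|\xi|\le R}|\xi|^{2m}d^n\xi$, giving $R^{2m} = \lambda(2m+n)/(4m+n) = \lambda/(1 + 2m/(2m+n))^{-1}$... more precisely $R^{2m}\frac{n+2m}{n+4m} = \lambda \cdot \frac{n+2m}{n+4m}\cdot$ wait — evaluating $\int_{|\xi|\le R} |\xi|^{2jm} d^n\xi = n v_n R^{n+2jm}/(n+2jm)$ yields $R^{2m}(n+2m)/(n+4m) = \lambda$, hence $R^{2m} = \lambda(n+4m)/(n+2m) = \lambda\{1 + 2m/(n+2m)\}$, and then $N \le (2\pi)^{-n}|\Omega| v_n R^n = (2\pi)^{-n}v_n|\Omega|\{1 + 2m/(2m+n)\}^{n/(2m)}\lambda^{n/(2m)}$, which is exactly \eqref{3.31}.

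**Main obstacle.**

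The routine Fourier computation and the one-variable optimization are straightforward; the genuine care is needed in two places. One is justifying the rearrangement/optimization rigorously: that among all measurable $F$ with $0 \le F \le M$, $\int|\xi|^{4m}F \le \lambda \int |\xi|^{2m}F$, the quantity $\int F$ is maximized by $M\,\mathbf{1}_{\{|\xi|\le R\}}$ — this is a standard "bathtub principle" argument (move mass of $F$ inward toward the origin, which only decreases $\int|\xi|^{4m}F - \lambda\int|\xi|^{2m}F$ relative to $\int F$, since the function $t \mapsto t^{4m} - \lambda t^{2m}$ is increasing past its minimum and one pushes mass to where this weight is most negative), but it must be spelled out. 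The other subtlety is ensuring that $\widetilde u \in W^{2m}(\bbR^n)$ with $\widehat{\widetilde u}$ decaying fast enough that all the integrals converge and the Plancherel identities hold for general $u \in \mathring W^{2m}(\Omega)$, not merely $C_0^\infty(\Omega)$ — this follows by density from \eqref{3.18} and \eqref{eq:Tbav}, so it is bookkeeping rather than a real difficulty. I do not expect any serious obstruction; the proof is a clean adaptation of the Li--Yau / Laptev method once the buckling reformulation is in hand.
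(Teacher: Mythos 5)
Your proposal is correct and arrives at the identical constant, but it takes the \emph{primal} (Li--Yau) route where the paper takes the \emph{dual} (Berezin--Laptev) one. Both start from the same reduction (Lemma~\ref{l2.7}: strictly positive eigenvalues of $A_{K,\Omega,m}$ $\leftrightarrow$ buckling eigenvalues, hence $N(\lambda,A_{K,\Omega,m})$ is bounded by the maximal dimension of a subspace of $\mathring W^{2m}(\Omega)$ on which $\mathfrak{a}_{\Omega,m}<\lambda\,\mathfrak{b}_{\Omega,m}$), and both end with the same Fourier-side computation. The difference is in the middle: the paper introduces the auxiliary pencil operator $L_{\Omega,m,\lambda}=A_{\Omega,m}^*A_{\Omega,m}-\lambda A_{\Omega,m}$ (needing Lemma~\ref{l3.1} for self-adjointness and discreteness and Lemma~\ref{l3.2} for the variational count), expands over its full orthonormal eigenbasis so that $\sum_j|\hatt{\wti\varphi}_j(\xi)|^2=(2\pi)^{-n}|\Omega|$ by Parseval, and bounds $n_-(L_{\Omega,m,\lambda})\le\mu^{-1}\sum_j[\mu-\mu_j]_+$ before minimizing over $\mu$; you instead take only an orthonormal family spanning a negative subspace, use Bessel's inequality $F(\xi)=\sum_j|\hatt{\wti u}_j(\xi)|^2\le(2\pi)^{-n}|\Omega|$, and solve the constrained problem of maximizing $\int F$ directly. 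These are Legendre duals of one another: the paper's $\min_{\alpha>0}\alpha^{-1}\int[\alpha-|\xi|^{4m}+|\xi|^{2m}]_+\,d^n\xi$ is exactly the dual value of your bathtub problem, and its minimizer $\alpha^*$ is characterized by $\int_{B_R}(|\xi|^{4m}-\lambda|\xi|^{2m})\,d^n\xi=0$, i.e.\ your $R^{2m}=\lambda(n+4m)/(n+2m)$. Your route avoids the auxiliary operator and its spectral decomposition entirely, which is arguably cleaner. One caveat: your heuristic justification of the bathtub step (``move mass of $F$ inward toward the origin'') is not literally valid, since the weight $t\mapsto t^{4m}-\lambda t^{2m}$ is not monotone on $[0,\infty)$ (it decreases from $0$ to its minimum at $t^{2m}=\lambda/2$ before increasing); the correct rigorous argument is the weak-duality inequality $\int F\le\int(1-\mu^{-1}g)F\le (2\pi)^{-n}|\Omega|\int[1-\mu^{-1}g]_+$ for every $\mu>0$, where $g(\xi)=|\xi|^{4m}-\lambda|\xi|^{2m}$, using that the sublevel set $\{g<\mu\}$ is a ball, followed by optimization in $\mu$ --- which is precisely the computation the paper performs. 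Since you flag this step as needing to be spelled out, I regard it as an imprecision rather than a gap.
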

\begin{proof}
Following our abstract Section \ref{s2}, we introduce in addition to the symmetric form 
$\mathfrak{a}_{\Omega,m}$ in $L^2(\Omega)$ (cf.\ \eqref{3.30}), the form 
\begin{equation} 
\mathfrak{b}_{\Omega,m}(f,g) :=(f,A_{\Omega, m}g)_{L^2(\Omega)}, \quad 
 f,g\in\dom(\mathfrak{b}_{\Omega,m}):=\dom(A_{\Omega, m}).    
\end{equation}
By Lemma \ref{l2.7}, particularly, by \eqref{2.45}, one concludes that 
\begin{equation}
N(\lambda ,A_{K,\Omega,m}) \leq \max \big(\dim\, \big\{f \in \dom(A_{\Omega, m}) \,\big|\, 
 \mathfrak{a}_{\Omega,m}(f,f) - \lambda \, \mathfrak{b}_{\Omega,m}(f,f) < 0\big\}\big), \lb{3.32} 
\end{equation}
by also employing \eqref{2.47} and the fact that 
\begin{equation}
\mathfrak{a}_{\Omega,m}(f_{K,\Omega,j},f_{K,\Omega,j})) - \lambda \, 
\mathfrak{b}_{\Omega,m}(f_{K,\Omega,j},f_{K,\Omega,j}) 
= (\lambda_{K,\Omega,j} - \lambda) \|f_{K,\Omega,j}\|_{L^2(\Omega)}^2 < 0,
\end{equation}
where $f_{K,\Omega,j} \in \dom(A_{\Omega, m}) \backslash \{0\}$ aditionally satisfies 
\begin{align} 
\begin{split} 
& f_{K,\Omega,j} \in \dom(A_{\Omega, m}^* A_{\Omega, m}) \, \text{ and } \\ 
&A_{\Omega, m}^* A_{\Omega, m} f_{K,\Omega,j} 
= \lambda_{K,\Omega,j} \, A_{\Omega, m} f_{K,\Omega,j}. 
\end{split} 
\end{align}
To further analyze \eqref{3.32} we now fix $\lambda \in (0,\infty)$ and introduce the auxiliary operator 
\begin{align}
\begin{split}
& L_{\Omega,m, \lambda} := A_{\Omega, m}^* A_{\Omega, m} 
- \lambda \, A_{\Omega, m}, \\
& \dom(L_{\Omega,m,\lambda}) := \dom(A_{\Omega, m}^* A_{\Omega, m}). 
\end{split} 
\end{align}
By Lemma \ref{l3.1}, $L_{\Omega,m, \lambda}$ is self-adjoint, bounded from below, with purely 
discrete spectrum as its form domain 
\begin{equation} 
\dom\big(|L_{\Omega,m,\lambda} |^{1/2}\big) = \dom(A_{\Omega, m}) = \mathring W^{2m} (\Omega)
\end{equation}  
embeds compactly into $L^2(\Omega)$ by Corollary \ref{c3.8} (cf.\ Lemma \ref{lB.2}). 
We will study the auxiliary eigenvalue problem,
\begin{equation}
L_{\Omega,m, \lambda} \varphi_j = \mu_j \varphi_j, \quad 
\varphi_j \in \dom(L_{\Omega,m, \lambda}), 
\end{equation}
where $\{\varphi_j\}_{j \in \bbN}$ represents an orthonormal basis of eigenfunctions in 
$L^2(\Omega)$ and for simplicity of notation we repeat the eigenvalues $\mu_j$ of 
$L_{\Omega,m,\lambda}$ according to their multiplicity, assuming $\varphi_j$ to be linearly independent 
in the following. Since $\varphi_j \in \mathring W^{2m} (\Omega)$, we denote by 
\begin{equation}
\wti \varphi_j (x) := \begin{cases} \varphi_j(x), & x \in \Omega, \\ 0, & 
x \in \bbR^n \backslash \Omega,  \end{cases}
\end{equation} 
their zero-extension of $\varphi_j$ to all of $\bbR^n$ and note that
\begin{equation} 
\wti \varphi_j \in \mathring W^{2m} (\bbR^n), \quad 
\partial^{\alpha} \wti \varphi_j = \wti{\partial^{\alpha} \varphi_j}, \quad 0 \leq |\alpha| \leq 2m. 
\end{equation} 

Next, given $\mu > 0$, one estimates
\begin{equation}
\mu^{-1} \sum_{\substack{j \in \bbN \\ \mu_j < \mu}} (\mu - \mu_j) \geq 
\mu^{-1} \sum_{\substack{j \in \bbN, \\ \mu_j < 0, \, \mu_j < \mu}} (\mu - \mu_j) \geq 
\mu^{-1} \sum_{\substack{j \in \bbN, \\ \mu_j < 0, \, \mu_j < \mu}} \mu = n_-(L_{\Omega,m, \lambda}), 
\end{equation}
where $n_-(L_{\Omega,m, \lambda})$ denotes the number of strictly negative eigenvalues of 
 $L_{\Omega,m,\lambda}$. Combining, Lemma \ref{l3.2} and \eqref{3.32} one concludes that 
\begin{align}
& N(\lambda ,A_{K,\Omega,m}) 
\leq \max \big(\dim\, \big\{f \in \dom(A_{min, \Omega, m}) \,\big|\, 
 \mathfrak{a}_{\Omega,m}(f,f) - \lambda \, \mathfrak{b}_{\Omega,m}(f,f) < 0\big\}\big)  \no \\ 
& \quad = n_-(L_{\Omega,m, \lambda}) \leq 
\mu^{-1} \sum_{\substack{j \in \bbN \\ \mu_j < \mu}} (\mu - \mu_j) 
= \mu^{-1} \sum_{j \in \bbN} [\mu - \mu_j]_+, \quad \mu > 0.     \lb{3.44} 
\end{align} 
Next, we focus on estimating the right-hand side of \eqref{3.44}. 
\begin{align}
& N(\lambda ,A_{K,\Omega,m}) \leq \mu^{-1} \sum_{j \in \bbN} (\mu - \mu_j)_+ 
= \mu^{-1} \sum_{j \in \bbN} \big[(\varphi_j,(\mu - \mu_j) \varphi_j)_{L^2(\Omega)}\big]_+ \no \\
& \quad = \mu^{-1} \sum_{j \in \bbN} \Big[\mu \|\varphi_j\|_{L^2(\Omega)}^2  
- \big\|(- \Delta)^m \varphi_j\big\|_{L^2(\Omega)}^2 
+ \lambda \big(\varphi_j, (- \Delta)^m \varphi_j\big)_{L^2(\Omega)}\Big]_+   \no \\
& \quad = \mu^{-1} \sum_{j \in \bbN} \Big[\mu \|\wti \varphi_j\|_{L^2(\bbR^n)}^2  
- \big\|(- \Delta)^m \wti \varphi_j\big\|_{L^2(\bbR^n)}^2 + 
\lambda \big(\wti \varphi_j, (- \Delta)^m \wti \varphi_j\big)_{L^2(\bbR^n)}\Big]_+   \no \\
& \quad = \mu^{-1} \sum_{j \in \bbN} \bigg[\int_{\bbR^n} 
\big[\mu - \big(|\xi|^{4m} - \lambda |\xi|^{2m}\big)\big] 
\big|\hatt{\wti \varphi}_j(\xi)\big|^2 \, d^n \xi\bigg]_+   \no \\
& \quad \leq  \mu^{-1} \sum_{j \in \bbN} \int_{\bbR^n} 
\big[\mu - \big(|\xi|^{4m} - \lambda |\xi|^{2m}\big)\big]_+  
\big|\hatt{\wti \varphi}_j(\xi)\big|^2 \, d^n \xi    \no \\
& \quad =  \mu^{-1} \sum_{j \in \bbN} \int_{\bbR^n} 
\big[\mu - |\xi|^{4m} + \lambda |\xi|^{2m}\big]_+  
\big|\hatt{\wti \varphi}_j(\xi)\big|^2 \, d^n \xi    \no \\  
& \quad = \mu^{-1} \int_{\bbR^n} 
\big[\mu - |\xi|^{4m} + \lambda |\xi|^{2m}\big]_+  
 \sum_{j \in \bbN} \big|\hatt{\wti \varphi}_j(\xi)\big|^2 \, d^n \xi.    \lb{3.45} 
\end{align}
Here we used unitarity of the Fourier transform on $L^2(\bbR^n)$, the fact that 
$\big[\mu - |\xi|^{4m} + \lambda |\xi|^{2m}\big]_+$ has compact support (rendering 
the integral over a compact subset of $\bbR^n$), and the monotone convergence 
theorem in the final step.    

Next, one observes that 
\begin{align}
 \sum_{j \in \bbN} \big|\hatt{\wti \varphi}_j(\xi)\big|^2 &= (2 \pi)^{-n} \sum_{j \in \bbN} 
 \big|\big(e^{i \xi \cdot},  \wti \varphi_j\big)_{L^2(\bbR^n)}\big|^2 
 = (2 \pi)^{-n} \sum_{j \in \bbN} 
 \big|\big(e^{i \xi \cdot}, \varphi_j\big)_{L^2(\Omega)}\big|^2   \no \\
 &= (2 \pi)^{-n} \big\|e^{i \xi \cdot}\big\|_{L^2(\Omega)}^2 = (2 \pi)^{-n} |\Omega|,    \lb{3.46} 
\end{align}
employing the fact that $\{\varphi_j\}_{j \in \bbN}$ represents an orthonormal basis in 
$L^2(\Omega)$. 

Combining \eqref{3.45} and \eqref{3.46}, introducing $\alpha = \lambda^{-2}\mu$, changing 
variables, $\xi = \lambda^{1/(2m)} \eta$, and taking the minimum with respect to $\alpha > 0$, 
proves the bound, 
\begin{align}
& N(\lambda ,A_{K,\Omega,m}) \leq (2 \pi)^{-n} |\Omega| \min_{\alpha > 0}
\bigg(\alpha^{-1} 
\int_{\bbR^n} \big[\alpha - |\xi|^{4m} + |\xi|^{2m}\big]_+ d^n \xi\bigg) \lambda^{n/(2m)} ,    \no \\
& \hspace*{10cm} \lambda > 0.    \lb{3.81} 
\end{align} 
Explicitly computing the minimum over $\alpha > 0$ in \eqref{3.81}\footnote{Mark Ashbaugh 
generously provided us with the explicit value of the minimum in \eqref{3.81}.} finally yields the 
result \eqref{3.31}. 
\end{proof}

\section{Comparisons With Other Bounds and Weyl Asymptotics} 
\lb{s4}

In our final section we briefly discuss the bound \eqref{3.31} on the eigenvalue 
counting function $N(\lambda,A_{K,\Omega,m})$.

For smooth, bounded domains $\Omega \subset \bbR^n$, and smooth lower-order coefficients (not necessarily constant), Weyl asymptotics for $N(\lambda, A_{K,\Omega,m})$ 
as $\lambda \to \infty$ was first derived by Grubb \cite{Gr83}, 
\begin{equation}\lb{4.1}
N(\lambda,A_{K,\Omega,m})\underset{\lambda\to\infty}{=}
(2\pi)^{-n}v_n|\Omega|\,\lambda^{n/(2m)} + O\big(\lambda^{(n-\theta)/(2m)}\big), 
\end{equation} 
where $v_n := \pi^{n/2}/\Gamma((n+2)/2)$ denotes the (Euclidean) volume of the unit 
ball in $\bbR^n$ ($\Gamma(\dott)$ being the Gamma function, cf.\ \cite[Sect.\ 6.1]{AS72}), and 
\begin{equation}\lb{4.2}
\theta:=\max\,\Bigl\{\frac{1}{2}-\varepsilon\,,\,\frac{2m}{2m + n - 1}\Bigl\},
\,\text{ with $\varepsilon>0$ arbitrary}.
\end{equation} 
We also refer to \cite{Mi94}, \cite{Mi06}, and more recently, \cite{Gr12}, where the authors  derive a sharpening of the remainder in \eqref{4.1} to any $\theta<1$. In the case $m=1$, Weyl 
asymptotics for $N(\lambda, A_{K,\Omega,1})$ was derived in \cite{AGMT10} for (bounded) quasi-convex domains, and most recently, in \cite{BGMM14} for bounded Lipschitz domains.  

The power law behavior $\lambda^{n/(2m)}$ of the estimate \eqref{3.31} for general 
domains governed by Hypothesis \ref{h3.3} (no smoothness of $\Omega$ being asssumed at all 
in the case of bounded domains), coincides with that in the known Weyl asymptotics \eqref{4.1} 
and is of course consistent with the abstract estimate \eqref{2.24}. In this connection we note that  
Weyl-type asymptotics and estimates for $N(\lambda,A_{F,\Omega,m})$, and hence upper 
bounds for $N(\lambda,A_{K,\Omega,m})$, without regularity assumptions on $\Omega$ can be 
found, for instance, in \cite{BS70}, \cite{BS71}, \cite{BS72}, \cite{BS73}, \cite{BS79}, \cite{BS80}, 
\cite{Ge13}, \cite{GLW11}, \cite{HH08}, \cite{HH11}, \cite{La97}, \cite{Li80}, \cite{LY83}, \cite{Me77}, \cite{NS05}, \cite{Ro71}, \cite{Ro72}, \cite{Ro76}, \cite{Sa01}, \cite{We08}. We mention, in particular, 
the bound for $N(\lambda,A_{F,\Omega,m})$ derived in \cite{La97} (extending earlier results in 
\cite{LY83} in the case $m=1$) which reads 
\begin{equation}
N(\lambda,A_{F,\Omega,m}) \leq (2\pi)^{-n}v_n|\Omega| \, [1 + (2m/n)]^{n/(2m)} \lambda^{n/(2m)}, 
\quad \lambda > 0.    \lb{4.3} 
\end{equation}
A comparison of \eqref{4.3} with our result \eqref{3.31},  
\begin{equation}
N(\lambda,A_{K,\Omega,m}) \leq (2\pi)^{-n}v_n|\Omega| \, 
\{1 + [2m/(2m+n)]\}^{n/(2m)} \lambda^{n/(2m)}, 
\quad \lambda > 0,   \lb{4.3a} 
\end{equation}
clearly demonstrates the superiority of the buckling problem approach developed here over the bound obtained by combining the generally valid estimate \eqref{1.13} with \eqref{4.3} due to the extra term $2m$ in \eqref{4.3a} as compared to \eqref{4.3}. 

Additional comparisons between the bound \eqref{4.3a} and Weyl asymptotics, as well as an extension of our approach replacing $(-\Delta)^m$ by $(-\Delta + V)^m$, $m\in\bbN$, for an appropriate class of potentials $V \geq 0$ supported in $\ol \Omega$, will appear in 
\cite{AGLMS14}.

\medskip

\noindent 
{\bf Acknowledgments.} We are grateful to Mark Ashbaugh and Sergey Naboko for very 
helpful discussions. Especially, we are indebted to Mark Ashbaugh for communicating to us the explicit value of the minimum in \eqref{3.81}. We also thank the anonymous referee for a very 
careful reading of this manuscript, implying a number of improvements in Section \ref{s3}. In particular, this prompted us to present the compact embedding result, Theorem \ref{t3.7}. 

F.G.\ and A.L.\ are indebted to all organizers of QMath12, and particularly, 
to Pavel Exner, Wolfgang K\"onig, and Hagen Neidhardt, for fostering a very stimulating 
atmosphere during the conference, leading to this collaboration. 

 
\end{document}